\numberwithin{equation}{section}
\theoremstyle{plain}
\newtheorem{lemma}{Lemma}[section]
\newtheorem{theorem}[lemma]{Theorem}
\newtheorem{corollary}[lemma]{Corollary}
\newtheorem{prop}[lemma]{Proposition}
\theoremstyle{definition}
\newtheorem{definition}{Definition}[section]
\newtheorem{remark}{Remark}[section]
      \newcommand{\N}{{\mathbb N}}
      \newcommand{\C}{{\mathbb C}}
     \newcommand{\Ne}{{\mathcal N}}
\newcommand{\tr}{\operatorname{tr}}
\newcommand{\id}{\operatorname{id}}
\newcommand{\rank}{\operatorname{rank}}
\newcommand{\ba}{|}
\newcommand{\uno}{1\!\!1}
\begin{document}

\baselineskip=14pt

\title[CB-norm estimates for maps between noncommutative $L_p$-spaces]{CB-norm estimates for maps between noncommutative $L_p$-spaces and quantum channel theory}

\author{Marius Junge and Carlos Palazuelos}

\thanks{The first author is partially supported by NSF DMS-1201886 grant. The second author is partially supported by MINECO (grant MTM2011-26912), the european CHIST-ERA project CQC (funded partially by MINECO grant PRI-PIMCHI-2011-1071) and ``Ram\'on y Cajal'' program. Both authors are partially supported by ICMAT Severo Ochoa Grant SEV-2011-0087 (Spain).}

\begin{abstract}
In the first part of this work we show how certain techniques from quantum information theory can be used in order to obtain very sharp embeddings between noncommutative $L_p$-spaces. Then, we use these estimates to study the classical capacity with restricted assisted entanglement of the quantum erasure channel and the quantum depolarizing channel. In particular, we exactly compute the capacity of the first one and we show that certain nonmultiplicative results hold for the second one.
\end{abstract}

\maketitle

\section{Introduction}
Embedding results  for $L_p$-spaces have a very long tradition in Banach space theory, see e.g. the handbook \cite{JoLi}. In some sense the starting point are the probabilistic concepts of $p$-stable random variables going back at least as early as \cite{lev}. Noncommutative analogues of such embedding results have been established by imitating and modifying the commutative results 
\cite{Jun, JunPar I, JunPar II}. The novelty in this paper is to use what should be called ``classical ideas'' from the emerging new quantum information theory and significantly improve embedding results for (vector-valued) noncommutative $L_p$-spaces, and indicate some applications. On the other hand, operator algebra and functional analysis techniques  have been very successfully applied in quantum information theory. For example, operator space techniques have been applied to Bell inequalities (\cite{JuPa1}, \cite{JPPVW2}, \cite{PWPVJ}), tools from free probability have been used for the classical capacity of a quantum channel (\cite{BCN}, \cite{BeNe I}, \cite{BeNe II}), and noncommutative versions of Grothendieck theorem where used for efficient approximations for quantum values of quantum games (\cite{CJPP}, \cite{ReVi}). There are also some examples using techniques from quantum information to prove new mathematical results. For example Regev and Vidick used the embezzlement state for a simplified proof of the so called Grothendieck theorem for operator spaces (\cite{ReVi2}) and Ahlswede/Winter's application of the Goldon-Thompson inequality has found numerous application in compressed sensing (see \cite{Rau}).

In this paper we will use quantum teleportation, one of the most important quantum information protocols, to provide some very sharp embeddings between noncommutative $L_p$-spaces. Let us recall the definition of the \emph{discrete noncommutative vector valued $L_p$-spaces}, introduced by Pisier in \cite{Pisierbook2}. For a given natural number $n$ and $1\leq p\leq \infty$ we will denote by $S_p^n:=S_p(\ell_2^n)$ the \emph{Schatten $p$-class} of operators acting on the $n$-dimensional complex Hilbert space $\ell_2^n$, which can be obtained by interpolation: $S_p^n=\big[S_\infty^n, S_1^n\big]_{\frac{1}{p}}$, where $S_\infty^n$ denotes the space of (compact) operators acting on $\ell_2^n$ joint with the operator norm and the trace class $S_1^n$ can be seen as the dual space of $S_\infty^n$ with respect to the dual action $\langle A,B\rangle=tr(AB^t)$. In fact, such an interpolation identity can be used to endow the space $S_p^n$ with a natural \emph{operator space structure} (\cite{Pisierbook}, \cite{Pisierbook2}). Note that the diagonal of $S_p^n$ is exactly $\ell_p^n=\big[\ell_\infty^n, \ell_1^n\big]_{\frac{1}{p}}$, so one also obtains an operator space structure for these spaces. An \emph{operator space} $E$ is a complex Banach space together with a sequence of \emph{matrix norms} $\alpha_n$ on $M_n[E]=M_n\otimes E$ with $n\geq 1$, satisfying certain ``good properties''. Then, given a linear map $T:E\rightarrow F$ between operator spaces we say that $T$ is a complete contraction (resp. a complete isomorphism/complete isometry) if the maps $id_{M_n}\otimes T:M_n[E]\rightarrow M_n[E]$ are contractions (resp. isomorphisms/isometries) for every $n$. When working with operator spaces these are precisely the morphisms one has to use in order to preserve the new structure. Finally, given any operator space $E$, we will denote $S_\infty[E]=S_\infty\otimes_{min} E$, where $\min$ denotes the minimal tensor norm in the category of operator spaces. On the other hand, Effros and Ruan introduced the space $S_1[E]$ as the (operator) space $S_1\widehat \otimes E$, where $\widehat \otimes$ denotes the projective operator space tensor norm. Then, using complex interpolation Pisier defined the noncommutative vector valued (operator) space $S_p[E]=\big[S_\infty[E], S_1[E]\big]_{\frac{1}{p}}$ for any $1\leq p\leq \infty$ and he proved that this definition leads to obtain the expected properties of $S_p[E]$, analogous to the commutative setting (see \cite[Chapter 3]{Pisierbook2}). The first result of this work is the following.
\begin{theorem}\label{Theorem I: teleportation-general}
Let $1\leq p,q\leq \infty$. Let $n_1,\cdots ,n_k$ be a family of natural numbers and let $d$ be the least common multiplier of $n_1,\cdots ,n_k$. There exist a completely positive and completely isometric embedding$$\tilde{J}_{p,q}:S_p^{n_1}\oplus_p \cdots \oplus_p S_p^{n_k}\rightarrow S_q^d\big(\ell_p^{n_1^2+\cdots +n_k^2}\big)$$ and  a completely positive and completely contractive map $$\tilde{W}_{p,q}:S_q^d\big(\ell_p^{n_1^2+\cdots +n_k^2}\big) \rightarrow S_p^{n_1}\oplus_p \cdots \oplus_p S_p^{n_k}$$ such that $ \tilde{W}_{p,q}\circ \tilde{J}_{p,q}=id$.

Moreover, the result is also true in the vector valued setting. That is, for any operator space $E$, $S_p^{n_1}[E]\oplus_p \cdots_p \oplus S_p^{n_k}[E]$ is completely isometric to a completely complemented subspace of $S_q^d\big(\ell_p^{n_1^2+\cdots +n_k^2}[E]\big)$.
\end{theorem}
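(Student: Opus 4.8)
The plan is to realise both maps directly from the quantum teleportation protocol and to reduce the general statement to the case of a single summand. For a fixed $n$ let $\{W_i\}_{i=1}^{n^2}$ be the discrete Weyl (generalized Pauli) unitaries on $\ell_2^n$, which satisfy $\tr(W_i^*W_j)=n\,\delta_{ij}$ and the $1$-design identity $\frac1n\sum_i W_i X W_i^*=\tr(X)I$, and let $|\Omega\rangle=\frac1{\sqrt n}\sum_k|k\rangle|k\rangle$ be the maximally entangled vector. Teleportation records a measurement outcome $i\in\{1,\dots,n^2\}$ in a \emph{classical} register and leaves Bob's system in the pre-correction state $W_i^*\rho W_i$; this suggests defining $\tilde J_{p,q}$ so that the $n^2$ outcomes sit in the classical index of $\ell_p^{n^2}$ and the (suitably density-weighted) conditional states sit in $S_q^n$, and defining $\tilde W_{p,q}$ as the conditional correction $\left(x_i\right)_i\mapsto\frac1{n^2}\sum_i W_i x_i W_i^*$. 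Both maps are completely positive, being built from conjugations by unitaries, a classical diagonal encoding and a partial trace; moreover $\tilde W_{p,q}\circ\tilde J_{p,q}=\id$ is exactly the teleportation identity and is manifestly independent of $q$.

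\textbf{The complete isometry (main step).} The heart of the argument is that $\tilde J_{p,q}$ is a complete isometry for \emph{every} pair $(p,q)$. I would prove this by complex interpolation together with the factorisation $\tilde W\tilde J=\id$. Writing the target through Pisier's identities $S_q^n\big[\ell_p^{n^2}\big]=\big[S_\infty^n[\ell_\infty^{n^2}],S_1^n[\ell_1^{n^2}]\big]$ as a two-parameter interpolation scale in $(1/p,1/q)$, it suffices to establish \emph{complete contractivity} of $\tilde J_{p,q}$ and of $\tilde W_{p,q}$ at the four corners $(p,q)\in\{1,\infty\}^2$. At these corners the vector-valued norms reduce to explicit minimal/projective tensor norms, which can be computed from the orthogonality relations $\tr(W_i^*W_j)=n\delta_{ij}$, the unitarity of the $W_i$ and the covariance of $|\Omega\rangle\langle\Omega|$ under the Weyl group; the density-matrix weights built into $\tilde J$ are chosen precisely so that these endpoint identifications come out isometric. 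Interpolation then yields that both maps are complete contractions for all $(p,q)$, and since $\tilde W\tilde J=\id$ with $\|\tilde W\|_{cb}\le1$ we get $\|\rho\|=\|\tilde W\tilde J\rho\|\le\|\tilde J\rho\|\le\|\rho\|$, and the same with $\id_{M_k}\otimes(\cdot)$; hence $\tilde J_{p,q}$ is a complete isometry and $\tilde W_{p,q}$ a completely contractive left inverse.

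\textbf{Assembly and the vector-valued case.} To pass from one summand to the family $n_1,\dots,n_k$ I would, for each $j$, compose the single-summand embedding $S_p^{n_j}\to S_q^{n_j}\big(\ell_p^{n_j^2}\big)$ with the isometric completely positive amplification $S_q^{n_j}\hookrightarrow S_q^{d}$, $x\mapsto x\otimes (d/n_j)^{-1/q}I_{d/n_j}$ (available because $n_j\mid d=\operatorname{lcm}(n_1,\dots,n_k)$), and place the $n_j^2$ classical outcomes in the $j$-th coordinate block of $\ell_p^{\,n_1^2+\cdots+n_k^2}$. Since the blocks are coordinate-disjoint in the $\ell_p$-direction, a block-diagonal element of $S_q^d\big(\ell_p^{\,\sum_j n_j^2}\big)$ has norm equal to the $\ell_p$-combination of the block norms, so the assembled map is completely isometric on $S_p^{n_1}\oplus_p\cdots\oplus_p S_p^{n_k}$, with left inverse the assembled (partial-trace plus correction) recovery, which stays completely contractive. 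For the vector-valued statement, every building block is an elementary module map (conjugation by a unitary, amplification by a normalized identity, partial trace, diagonal classical encoding), so tensoring with $\id_E$ preserves complete positivity and, through Pisier's Fubini identity $S_q^d\big(\ell_p^{N}\big)[E]=S_q^d\big(\ell_p^{N}[E]\big)$, preserves the relevant norms; thus $\tilde J_{p,q}\otimes\id_E$ remains a complete isometry and $\tilde W_{p,q}\otimes\id_E$ a completely contractive left inverse.

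\textbf{Main obstacle.} The decisive and most delicate point is the endpoint computation in the second step: verifying complete contractivity at the four corners with the correct density weightings, and in particular confirming that the quantum index $q$ may be chosen \emph{freely}. This is exactly where teleportation is used: the Weyl covariance of $|\Omega\rangle\langle\Omega|$ and the orthogonality $\tr(W_i^*W_j)=n\delta_{ij}$ collapse the quantum $S_q^n$-contribution onto the classical $\ell_p^{n^2}$-register, so that the $L_q$-scale of the quantum register does not affect the norm. Getting the weights right so that all four corners are \emph{simultaneously} isometric — rather than merely isomorphic — is the crux; once that is in place, interpolation and the factorisation $\tilde W\tilde J=\id$ deliver the theorem, and the assembly and vector-valued extensions are essentially formal.
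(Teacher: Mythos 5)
Your construction is the right one: unwinding your per-block maps shows they coincide exactly with the paper's $\tilde J_{p,q}$ (your amplification $x\mapsto x\otimes (d/n_j)^{-1/q}I_{d/n_j}$ reproduces the factorization $|\psi_d\rangle\langle\psi_d|=|\psi_{n_j}\rangle\langle\psi_{n_j}|\otimes|\psi_{d/n_j}\rangle\langle\psi_{d/n_j}|$ used there), and your single-summand argument (endpoint contractivity, interpolation, and a completely positive completely contractive left inverse forcing the isometry) is essentially the paper's proof of Corollary \ref{Cor teleport 1 space}. The genuine gap is in the assembly step, which you declare ``essentially formal''. Your key claim --- that a block-diagonal element of $S_q^d\big(\ell_p^{n_1^2+\cdots+n_k^2}\big)$, with blocks disjoint in the classical direction, has norm equal to the $\ell_p$-combination of the block norms --- is \emph{false} whenever $p\neq q$: disjointness of the classical coordinates does not decouple the shared quantum register. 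For instance, take $P_1,P_2\in M_d$ complementary orthogonal projections of rank $d/2$. For $q=\infty$, $p=1$ the element $P_1\otimes e_1+P_2\otimes e_2$ corresponds to the unital completely positive map $\ell_\infty^2\to M_d$, $(s,t)\mapsto sP_1+tP_2$, so
\begin{align*}
\|P_1\otimes e_1+P_2\otimes e_2\|_{M_d(\ell_1^2)}=1<2=\|P_1\|_{M_d}+\|P_2\|_{M_d},
\end{align*}
while dually, for $q=1$, $p=\infty$, pairing against $P_1^t\otimes e_1+P_2^t\otimes e_2$ gives
\begin{align*}
\|P_1\otimes e_1+P_2\otimes e_2\|_{S_1^d(\ell_\infty^2)}=d>\tfrac{d}{2}=\max_j\|P_j\|_{S_1^d}.
\end{align*}
So the norm of a classical-block-diagonal element can be strictly smaller \emph{or} strictly larger than the $\ell_p$-combination, and neither the contractivity of the assembled $\tilde J_{p,q}$ nor that of the assembled recovery map follows from the single-summand statements. (The recovery map has the same problem in another guise: each block component being completely contractive only bounds the $\ell_p$-sum of the outputs by $k^{1/p}$ times the input norm.)

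This is exactly why the paper's proof of Theorem \ref{Theorem I: teleportation-general} is global rather than blockwise. At $p=1$ the whole assembled map is factored through $\tilde\iota(\rho_1\oplus\cdots\oplus\rho_k)=(\rho_1\oplus\cdots\oplus\rho_k)\otimes|\psi_d\rangle\langle\psi_d|$, using the single shared $d$-dimensional maximally entangled state together with the fact that an $\ell_1$-direct sum of complete contractions is completely contractive; at $p=\infty$ one uses that the direct sum is unital and completely positive. Moreover --- and this is a second ingredient missing from your outline --- the intermediate values of $p$ cannot be reached by interpolating a fixed operator: the assembled map carries block-dependent weights $n_j^{1/p'}$ which vary with $p$ \emph{differently in each block}, so no common scalar can be pulled out (unlike the single-summand case, where $J_{p,q}$ is a fixed map times $n^{1-1/p-1/q}$). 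The paper therefore invokes the Cwikel--Janson/Stein-type interpolation theorem for analytic \emph{families} of operators (Theorem \ref{Thm general interpolation}), applied to $\tilde J_z$ with weights $n_j^{1-z}$. Your single-summand work survives intact, but to repair the proposal you must replace the blockwise assembly by this global endpoint-plus-analytic-family argument (or prove the claimed norm identity for the specific image elements, which amounts to the same thing).
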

Finding suitable embeddings of vector valued $L_p$-spaces has a long tradition in Banch space theory, and can be used in noncommutative harmonic analysis, quantum probability theory and operator spaces (see for instance  \cite{JunPar I}, \cite{JunPar II}, \cite{JunPar III} and the references therein). In particular, the type of embeddings given in Theorem \ref{Theorem I: teleportation-general}  has  been used in order to study notions like type and cotype or $K$-convexity and $B$-convexity in the context of operator spaces. This is the case of the work \cite{JunPar I}, where the authors, motivated by the study of the previous notions, provided a complete isomorphism from the space $S_p^n$ onto a completely complemented subspace of $S_q^{n^m}(\ell_p^m)$ with $m\approx n^2$ (\cite[Theorem 2]{JunPar I}). Moreover, using type/cotype estimates they proved that the order  $m\approx n^2$ is optimal\footnote{Remarkably, this order is different from the well known optimal commutative order $m
 \approx n$.}. An immediate corollary of Theorem \ref{Theorem I: teleportation-general} is the following result, which significantly improves \cite[Theorem 2]{JunPar I}.
\begin{corollary}\label{Cor teleport 1 space}
Let $1\leq p,q\leq \infty$. There exists a complete isometry of $S_p^n$ onto a completely complemented subspace of $S_q^{n}(\ell_p^{n^2})$. Moreover, both the isometry and the projection are completely positive maps. The result also holds in the vector valued case.
\end{corollary}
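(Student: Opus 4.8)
The plan is to obtain the Corollary as the special case $k=1$ of Theorem~\ref{Theorem I: teleportation-general}. First I would set $k=1$ and $n_1=n$; then the least common multiple is simply $d=n$ and the index of the $\ell_p$-summand is $n_1^2=n^2$. With this choice the Theorem furnishes directly a completely positive, completely isometric embedding $\tilde{J}_{p,q}:S_p^n\rightarrow S_q^n(\ell_p^{n^2})$ together with a completely positive, completely contractive map $\tilde{W}_{p,q}:S_q^n(\ell_p^{n^2})\rightarrow S_p^n$ satisfying $\tilde{W}_{p,q}\circ\tilde{J}_{p,q}=\id$. I would then take $\tilde{J}_{p,q}$ itself (corestricted to its range) as the desired complete isometry, so that $\im\tilde{J}_{p,q}$ is a completely isometric copy of $S_p^n$ inside $S_q^n(\ell_p^{n^2})$.

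The remaining task is to exhibit the completely positive projection witnessing complementation. Here I would set $P:=\tilde{J}_{p,q}\circ\tilde{W}_{p,q}$, a map from $S_q^n(\ell_p^{n^2})$ into itself. As a composition of completely positive maps, $P$ is completely positive, and using $\tilde{W}_{p,q}\circ\tilde{J}_{p,q}=\id$ one computes $P^2=\tilde{J}_{p,q}\circ(\tilde{W}_{p,q}\circ\tilde{J}_{p,q})\circ\tilde{W}_{p,q}=\tilde{J}_{p,q}\circ\tilde{W}_{p,q}=P$, so $P$ is idempotent. Its range lies in $\im\tilde{J}_{p,q}$, and for any $x=\tilde{J}_{p,q}(y)$ in that range one has $P(x)=\tilde{J}_{p,q}(\tilde{W}_{p,q}(\tilde{J}_{p,q}(y)))=\tilde{J}_{p,q}(y)=x$; hence $P$ is a (completely bounded, indeed completely positive) projection onto the embedded copy of $S_p^n$. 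This simultaneously yields the complete isometry and the completely positive projection demanded by the statement.

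For the vector valued assertion I would run the identical argument using the vector valued part of Theorem~\ref{Theorem I: teleportation-general} with $k=1$, which supplies the analogous maps $\tilde{J}_{p,q}$ and $\tilde{W}_{p,q}$ between $S_p^n[E]$ and $S_q^n(\ell_p^{n^2}[E])$; the idempotent $P=\tilde{J}_{p,q}\circ\tilde{W}_{p,q}$ is once more a completely positive projection onto the isometric copy. Since the Corollary is merely a specialization of the Theorem, there is no genuine obstacle here; the only (routine) point requiring verification is the idempotency-and-range computation for $P$, i.e.\ that the splitting relation $\tilde{W}_{p,q}\circ\tilde{J}_{p,q}=\id$ automatically forces $\tilde{J}_{p,q}\circ\tilde{W}_{p,q}$ to be a projection onto $\im\tilde{J}_{p,q}$.
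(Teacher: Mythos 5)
Your proposal is correct: with $k=1$ and $n_1=n$ the least common multiple is $d=n$, so Theorem \ref{Theorem I: teleportation-general} supplies completely positive maps $\tilde{J}_{p,q}$ and $\tilde{W}_{p,q}$ with $\tilde{W}_{p,q}\circ\tilde{J}_{p,q}=\id$, and your idempotent $P=\tilde{J}_{p,q}\circ\tilde{W}_{p,q}$ is indeed a completely positive projection onto the image; note, though, that its complete boundedness should be drawn from $\|P\|_{cb}\leq\|\tilde{J}_{p,q}\|_{cb}\,\|\tilde{W}_{p,q}\|_{cb}\leq 1$ rather than from complete positivity alone, since $S_q^n(\ell_p^{n^2})$ is not a C$^*$-algebra for general $p,q$. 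However, this is the reverse of the paper's actual logical order: although the introduction advertises the Corollary as an immediate consequence of the Theorem, in Section \ref{Sec: Quantum teleportation revised} the Corollary is proved \emph{first}, directly, by rescaling the teleportation maps of Propositions \ref{prop:p-embedding} and \ref{prop:p,q-projection}, namely $J_{p,q}:=n^{1-\frac{1}{p}-\frac{1}{q}}J$ and $W_{p,q}:=n^{\frac{1}{p}+\frac{1}{q}-1}W$ with $J(\rho)=\frac{1}{n}\sum_{k,l}e_{k,l}\otimes T_{k,l}\rho T_{k,l}^*$, and verifying the algebraic identity $W_{p,q}\circ J_{p,q}=\id$; the Theorem is then proved afterwards as a generalization of exactly this argument. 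Your derivation is non-circular, because the paper's proof of the Theorem never invokes the Corollary, but it silently relies on the full strength of the Theorem, whose proof for several summands requires the heavier Cwikel--Janson interpolation of analytic families of operators (Theorem \ref{Thm general interpolation}) to handle the different normalizations $n^{1/p'}$ and $m^{1/p'}$ on the blocks, whereas the paper's direct proof of the Corollary needs only the elementary interpolation estimate (\ref{interpolation step I}). So the paper's route buys a self-contained, lighter proof with explicit maps (which are reused later, e.g.\ the maps $j_p$ and $\tilde{J}_p$ in Section \ref{Sec: Depolarizing channel}), while your route buys brevity at the price of invoking the harder general statement.
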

Hence, while keeping the optimal order $n^2$ in the commutative part ($\ell_p$-space) Corollary \ref{Cor teleport 1 space} provides a very tight estimate for the dimension of the noncommutative part ($S_q$-space). Moreover, we have now a complete isometry rather than a complete isomorphism (where a universal constant $C$ appears in the relation of the norms).

Some preliminary calculations show that the techniques developed in this work could be used to define some new embeddings in more general contexts. However, since our main motivation in this work is the use of Theorem \ref{Theorem I: teleportation-general} to study the capacity of certain quantum channels, we postpone this analysis to a future publication.

Finally we will show the following result, which can be understood as a complement of Theorem \ref{Theorem I: teleportation-general}. The key point here is to use ideas from the superdense conding, another important protocol of quantum information.
\begin{theorem}\label{Theorem superdense embedding}
Let $1\leq p,q\leq \infty$. Then, there exist a completely positive and a completely isometric map $$H_{p,q}:\ell_p^{n^2}\rightarrow S_q^n(S_p^n)$$and a completely positive and completely contractive map $$Q_{p,q}:S_q^n(S_p^n)\rightarrow \ell_p^{n^2}$$ such that $Q_{p,q}\circ H_{p,q}=id$.

Moreover, if $E$ is any operator space, $\ell_p^{n^2}[E]$ is completely isometric to a completely complemented subspace of $S_q^n\big(S_p^n[E]\big)$.
\end{theorem}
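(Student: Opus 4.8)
The plan is to realise $H_{p,q}$ and $Q_{p,q}$ as the encoding and decoding maps of the superdense coding protocol, and to reduce everything to a four-corner interpolation. Fix the Weyl (generalised Pauli) unitaries $W_1,\dots,W_{n^2}$, an orthogonal unitary basis of $M_n$, and the maximally entangled vector $|\Omega\rangle=\tfrac1{\sqrt n}\sum_i|ii\rangle\in\C^n\otimes\C^n$; set $\rho_a=(W_a\otimes I)|\Omega\rangle\langle\Omega|(W_a^\ast\otimes I)$, so that $\{|\psi_a\rangle:=(W_a\otimes I)|\Omega\rangle\}_a$ is the orthonormal Bell basis and $\sum_a\rho_a=I_{n^2}$. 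I would define
\[
H_{p,q}(e_a)=n^{\frac1p-\frac1q}\,\rho_a,\qquad Q_{p,q}(x)=n^{\frac1q-\frac1p}\sum_a\tr(\rho_a x)\,e_a,
\]
viewing $\rho_a\in M_n\otimes M_n$ with the first leg carrying the outer $S_q^n$ and the second the inner $S_p^n$. Then $Q_{p,q}\circ H_{p,q}=\id$ is immediate from $\tr(\rho_a\rho_b)=|\langle\psi_a|\psi_b\rangle|^2=\delta_{ab}$, the scalar weights being chosen exactly so that the two normalisations cancel. Both maps are completely positive by construction: $e_a\mapsto\rho_a$ sends the positive cone of the commutative space $\ell_p^{n^2}$ into positive operators (hence is $CP$), and $x\mapsto(\tr(\rho_a x))_a$ is a measurement, i.e. a $CP$ map into a commutative algebra. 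The key structural observation is that once I show that both $H_{p,q}$ and $Q_{p,q}$ are complete contractions, the identity $Q_{p,q}\circ H_{p,q}=\id$ forces $\|x\|=\|Q_{p,q}H_{p,q}x\|\le\|H_{p,q}x\|\le\|x\|$ at every matrix level, so $H_{p,q}$ is automatically a complete isometry and $H_{p,q}\circ Q_{p,q}$ is a completely bounded idempotent onto its range; thus it suffices to prove complete contractivity of both maps.

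Since $\rho_a$ is a fixed operator, the families $\{H_{p,q}\}$ and $\{Q_{p,q}\}$ depend on $(p,q)$ only through the analytic scalar $n^{1/p-1/q}$ and through the ambient spaces, so I would obtain complete contractivity for all $1\le p,q\le\infty$ by complex interpolation from the four corners $(p,q)\in\{1,\infty\}^2$, using $\ell_p^{n^2}=[\ell_\infty^{n^2},\ell_1^{n^2}]_{1/p}$, Pisier's identity $S_q^n[S_p^n]=[S_q^n[S_\infty^n],S_q^n[S_1^n]]_{1/p}$ in the inner variable, and $[S_\infty^n[E],S_1^n[E]]_{1/q}=S_q^n[E]$ in the outer one (reiteration handles the two parameters, and the weight is entire). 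Three corners are routine. At $(\infty,\infty)$ the map $e_a\mapsto\rho_a$ is a unital $\ast$-homomorphism of $\ell_\infty^{n^2}$ into $M_{n^2}=S_\infty^n[S_\infty^n]$ (using $\rho_a\rho_b=\delta_{ab}\rho_a$ and $\sum_a\rho_a=I$), hence a complete isometry; the corner $(1,1)$ is its predual map into $S_1^{n^2}=S_1^n[S_1^n]$, again a complete contraction, and $Q$ is handled dually. At $(1,\infty)$ one has $\|n\rho_a\|_{M_n\otimes_{\min}S_1^n}=1$ (since $\|\rho_\Omega\|_{S_\infty^n[S_1^n]}=\tfrac1n$, as $\sum_{ij}e_{ij}\otimes e_{ij}$ is the image of $\id_{S_1^n}$ under $CB(S_1^n,S_1^n)=S_\infty^n[S_1^n]$), so $H$ is contractive there by the triangle inequality.

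The one genuinely hard corner is $(\infty,1)$, where I must prove that $H_{\infty,1}:\ell_\infty^{n^2}\to S_1^n[S_\infty^n]=S_1^n\widehat\otimes M_n$, $c\mapsto\tfrac1n\sum_a c_a\rho_a$, is a complete contraction, i.e. $\big\|\tfrac1n\sum_a c_a\rho_a\big\|_{S_1^n[M_n]}\le\max_a|c_a|$. This is exactly the place where the superdense-coding structure is used rather than mere orthogonality, and I expect it to be the main obstacle: the estimate is tight, since for $c=\mathbf 1$ one gets $\tfrac1n\sum_a\rho_a=\tfrac1n I_{n^2}$ of norm $\tfrac1n\|I_n\otimes I_n\|_{S_1^n[M_n]}=\tfrac1n\cdot n=1$, so no slack is available. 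The approach would be to write $\sum_a c_a\rho_a=U\,\mathrm{diag}(c)\,U^\ast$, where $U$ is the unitary whose columns are the Bell vectors $|\psi_a\rangle$, and then to split $U$ across the two tensor legs using the ricochet identity $(X\otimes I)|\Omega\rangle=(I\otimes X^{t})|\Omega\rangle$; feeding this into the factorisation description $\|z\|_{S_1^n[M_n]}=\inf\{\|\alpha\|_{S_2^n}\|y\|_{M_n(M_n)}\|\beta\|_{S_2^n}:z=(\alpha\otimes 1)y(\beta\otimes1)\}$ should produce a factorisation with $\|\alpha\|_{S_2^n}\|\beta\|_{S_2^n}\|y\|_{M_n(M_n)}\le\max_a|c_a|$. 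The completely bounded version is obtained by carrying a free matrix amplification through the same factorisation, and the corresponding hard estimate for $Q_{p,q}$ follows by taking adjoints, since $Q$ is (up to the scalar weight) the Banach-space adjoint of the encoding map.

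Finally, complete contractivity of $H_{p,q}$ and $Q_{p,q}$ together with $Q_{p,q}\circ H_{p,q}=\id$ yields, as noted, that $H_{p,q}$ is a completely positive complete isometry of $\ell_p^{n^2}$ onto the completely complemented range of the completely positive idempotent $H_{p,q}\circ Q_{p,q}$, which is the first assertion. For the vector-valued statement I would run the identical argument with the maps $H_{p,q}\otimes\id_E$ and $Q_{p,q}\otimes\id_E$: complete contractivity of $T$ gives $\|T\otimes\id_E\|\le\|T\|_{cb}\le 1$ on the $E$-valued spaces, the relation $(Q_{p,q}\otimes\id_E)\circ(H_{p,q}\otimes\id_E)=\id$ is unchanged, and Pisier's Fubini-type identity $S_q^n\big(S_p^n[E]\big)=S_q^n(S_p^n)[E]$ identifies the target, giving the complete isometry of $\ell_p^{n^2}[E]$ onto a completely complemented subspace of $S_q^n(S_p^n[E])$.
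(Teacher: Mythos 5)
You use exactly the paper's maps and normalizations (your $H_{p,q}(e_a)=n^{\frac1p-\frac1q}\rho_a$ and $Q_{p,q}$ coincide with the paper's $n^{\frac1p-1-\frac1q}H$ and $n^{1-\frac1p+\frac1q}Q$), and your reduction ``both maps completely contractive $+$ $Q_{p,q}H_{p,q}=\id$ $\Rightarrow$ complete isometry onto a completely complemented subspace'' is sound. But your route to the contractivity estimates has two gaps. The first is that the $(\infty,1)$ corner, which you single out as the main obstacle, is never actually proved (``should produce a factorisation'' is a hope, not an argument). It is true, and far more easily than you expect: either dualize the $(p,q)=(1,\infty)$ estimate for $Q$, which is elementary because $Q=\frac1nP$ with $P$ the Bell-basis coordinate projection of Corollary~\ref{orth-complementation} and $\|\id:M_n(S_1^n)\to S_1^n(S_1^n)\|_{cb}=n$; or note that $\frac1n\sum_a c_a\rho_a=\big(\tfrac{\uno_n}{\sqrt n}\otimes\uno_n\big)\big(\sum_a c_a\rho_a\big)\big(\tfrac{\uno_n}{\sqrt n}\otimes\uno_n\big)$ is already an optimal factorization for (\ref{norm p< q}), since $\sum_a c_a\rho_a$ is a spectral decomposition (so its operator norm is $\max_a|c_a|$) and $\|\uno_n/\sqrt n\|_{S_2^n}=1$; the same factorization works at every matrix level because $\sum_a x_a\otimes\rho_a$ is block-diagonal in the Bell basis. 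Note also that the paper sidesteps this corner entirely: it interpolates only in $p$ (endpoints $p=1,\infty$, target $M_n(\cdot)$), shifts $q$ by the cheap identity $\|\id:M_n[X]\to S_q^n[X]\|_{cb}=n^{1/q}$, and obtains $p>q$ by duality; your two-parameter scheme also needs a Stein-type argument as in Theorem~\ref{Thm general interpolation} (and maximality of $\ell_1^{n^2}$ to upgrade your triangle-inequality bound at $(1,\infty)$ to a cb bound), all of which is avoidable.

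The serious gap is the vector-valued statement. Your last paragraph rests on the principle that complete contractivity of $T$ gives $\|T\otimes\id_E\|\le\|T\|_{cb}$ on the $E$-valued spaces, together with a ``Fubini identity'' $S_q^n\big(S_p^n[E]\big)=S_q^n(S_p^n)[E]$. Neither is available. For $1<p<\infty$ the spaces $\ell_p^{n^2}[E]$ and $S_p^n[E]$ are not minimal or projective tensor products, so complete boundedness of a map acting on the base does not control its tensorization with $\id_E$; this failure is precisely the reason Pisier's theory needs a separate (regular-type) norm, which can be strictly larger than the cb norm. Worse, $S_q^n(S_p^n)[E]$ is not even a defined object for $p\neq q$: the construction $X[E]$ requires the base $X$ to be a noncommutative $L_r$-space, which the mixed space $S_q^n(S_p^n)$ is not. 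The correct repair --- and this is exactly what the paper does in Propositions~\ref{prop: p embedding superdense} and~\ref{prop:p-projection superdense} --- is to carry $E$ through the whole proof: each of your corner estimates does survive tensoring with $\id_E$ (functoriality of $\otimes_{\min}$ at $(\infty,\infty)$ and $(1,\infty)$, of $\widehat\otimes$ at $(1,1)$, and the factorization above at $(\infty,1)$), and one then interpolates the \emph{fixed} map $H\otimes\id_E$ between couples such as $\big(M_n(M_n[E]),M_n(S_1^n[E])\big)$, rather than trying to bolt $\id_E$ onto a scalar estimate after the fact.
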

A quantum channel is defined  as a completely positive and trace preserving map $\Ne:M_n\rightarrow M_m$. Following  \cite{JuPa2} we will denote a quantum channel by $\Ne:S_1^n\rightarrow S_1^m$, where we use $S_1^k$ to denote the trace class of operators acting on $\ell_2^k$. This notation emphasizes the idea that $\Ne$ must be, in particular, a norm one operator on these spaces. As it was shown in \cite{DJKR} and \cite{JuPa2}, one can understand some channel capacities as the derivative of certain completely bounded and completely $p$-summing norms. We refer to \cite[Section 5]{JuPa2} for a brief introduction about channel capacities from a mathematical point of view. In particular, if we denote by $C_{prod}^d(\Ne)$ the product state version of the classical capacity of the quantum channel $\Ne$ with assisted entanglement restricted to dimension $d$ per channel use, one can see that $C_{prod}^d(\Ne)$ can be written as the derivative (with respect to $p$) of the $\ell_p(S_p^d)$-summing norm of the adjoint map $\Ne^*:M_m\rightarrow M_n$ (see \cite[Theorem 1.1]{JuPa2} for details). Note that this family of capacities covers, in particular, the well studied classical capacity with non entanglement  ($d=1$) and the classical capacity with unlimited assisted entanglement ($d=n$). Unfortunately, in order to compute the corresponding capacity (rather than its product state version) one has to consider the regularization
\begin{align}\label{regularization}
C^d(\Ne):=\sup_k\frac{C_{prod}^{d^k}(\Ne^{\otimes _k})}{k}.
\end{align}
Since quantum information theory deals with the ways we can send and manipulate the information by using quantum resources, it is not surprising that the study of quantum channel capacities is one of the main topics in the theory and, so, it has captured the attention of many researchers in the area (see for instances \cite{SmithChannels} and the references therein). Let us consider here the \emph{quantum depolarizing channel} with parameter $\lambda\in [0,1]$, $\mathcal \mathcal D_\lambda:S_1^n\rightarrow S_1^n$, defined by $$\mathcal \mathcal D_\lambda(\rho)=\lambda\rho+ (1-\lambda)\frac{1}{n}tr(\rho)\uno_n \text{      }\text{    for every    }\text{      }  \rho\in S_1^n,$$and also the \emph{quantum erasure channel} with parameter $\lambda\in [0,1]$, $\mathcal E_\lambda:S_1^n\rightarrow S_1^n\oplus_1 \C$, defined by $$\mathcal E_\lambda(\rho)=\lambda\rho\oplus (1-\lambda)tr(\rho) \text{      }\text{    for every    }\text{      }  \rho\in S_1^n.$$Here $\uno_n$ denotes the identity element in $M_n$. The previous two channels are very important in quantum information because, despite its very simple form, they already provide some non trivial examples. In order to emphasize this idea, let us mention that computing the (non considered in this work) quantum capacity of the depolarizing channel (even in dimension $n=2$) is an open problem in the area (see \cite{Ouyang}, \cite{SmSm} for some recent progresses). On the other hand, the classical capacity of the $\mathcal \mathcal D_\lambda$ with no assisted entanglement ($C^1(\Ne)$) and with unlimited entanglement ($C^n(\Ne)$) are well understood (see \cite{King} and \cite{BSST2} respectively). The key point here is that both quantities, $C^1_{prod}$ and $C^n_{prod}$, are multiplicative when acting on the tensor product of depolarizing channels\footnote{In fact, it was shown in \cite{BSST2} that $C_{prod}^n$ is multiplicative on every channel so we always have $C^n=C_{prod}^n$.}, so the regularization (\ref{regularization}) is not required in this case. On the other hand, a very good property of these two channels is that they are covariant (see definition below) and that allows us to simplify the statement of \cite[Theorem 1.1]{JuPa2} so that one has to deal with the $d$-norm of the corresponding channel $$\big\|\Ne:S_1^n\rightarrow S_p^n\big\|_d:=\big\|id_{M_d}\otimes \Ne:M_d(S_1^n)\rightarrow M_d(S_p^n)\big\|,$$rather than with the $\ell_p(S_p^d)$-summing norm of the adjoint map $\Ne^*$. More precisely, for any covariant quantum channel $\Ne:S_1^n\rightarrow S_1^n$ we have
\begin{align*}
C_{prod}^d(\Ne)=\ln n+ \frac{d}{dp}[\|\Ne:S_1^n\rightarrow S_p^n\|_{d}]|_{p=1}
\end{align*}for every $1\leq d\leq n$ (\cite[Corollary 4.2]{JuPa2}). Then, we can use the estimate proved in Theorem \ref{Theorem I: teleportation-general} to obtain the following result.
\begin{theorem}\label{Main theorem Depolarizing}
Let $\mathcal \mathcal D_\lambda:S_1^n\rightarrow S_1^n$ and  $\mathcal E_\lambda:S_1^n\rightarrow S_1^n\oplus_1 \C$ be respectively the quantum depolarizing channel and the quantum erasure channel with parameter $\lambda\in [0,1]$ defined as before and let $d$ be a natural number such that $1\leq d\leq n$. Then,
\begin{align}\label{p-norm depol}
\big\|\mathcal \mathcal \mathcal D_\lambda:S_1^n\rightarrow S_p^n\big\|_d=\Big(\frac{1}{d}\big(\lambda d+\frac{1-\lambda}{n}\big)^p+\big(\frac{1-\lambda}{n}\big)^p\big(n-\frac{1}{d}\big)\Big)^\frac{1}{p},
\end{align}which implies
\begin{align*}
C_{prod}^d(\mathcal \mathcal D_\lambda)= \log_2  (nd)+ \big(\lambda+ \frac{1-\lambda}{nd}\big)\log_2  \big(\lambda+ \frac{1-\lambda}{nd}\big)+(nd-1)\big(\frac{1-\lambda}{nd}\big)\log_2  \big(\frac{1-\lambda}{nd}\big).
\end{align*}On the other hand,
\begin{align}\label{p-norm erasure}
\big\|\mathcal E_\lambda:S_1^n\rightarrow S_p^n\oplus_p \C\big \|_d=\Big(\lambda^pd^{p-1}+(1-\lambda)^p\Big)^\frac{1}{p},
\end{align}so that
\begin{align*}
C_{prod}^d(\mathcal E_\lambda)=\lambda \log_2  (nd).
\end{align*}
\end{theorem}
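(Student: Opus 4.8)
The plan is to compute each $d$-norm by reducing to a single, maximally entangled test input via covariance, to evaluate the resulting mixed norm with the embeddings of Theorem~\ref{Theorem I: teleportation-general}, and finally to read off the capacities by differentiating the explicit norm functions at $p=1$. Both $\mathcal{D}_\lambda$ and $\mathcal{E}_\lambda$ commute with conjugation by unitaries of $M_n$, and $id_{M_d}\otimes\mathcal{N}$ intertwines the representation $\bar u\otimes u$ on the input with the corresponding one on the output. Since $1\le d\le n$, I would fix the normalized maximally entangled vector $|\psi\rangle=\frac{1}{\sqrt d}\sum_{i=1}^{d}|i\rangle|i\rangle\in\C^d\otimes\C^n$ and put $\omega=|\psi\rangle\langle\psi|$. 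Complete positivity lets one restrict to positive inputs; averaging an extremal input over the group $\{\bar u\otimes u\}$ and using that these unitaries act as complete isometries on $M_d(S_p^n)$, together with convexity of the norm, shows an extremal input may be taken invariant. The invariant positive inputs are spanned by $\omega$ and the complementary projection, which reduces the maximization to a short one-parameter problem whose optimum is (a multiple of) $\omega$.

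Next I would fix the normalization and compute the outputs. Identifying $M_d(S_1^n)$ with the completely bounded maps $M_n\to M_d$, the element $d\,\omega$ corresponds to the unital completely positive compression onto the top $d\times d$ corner, so $\|d\,\omega\|_{M_d(S_1^n)}=1$. Applying the channels to this normalized state gives $(id\otimes\mathcal{D}_\lambda)(d\,\omega)=\lambda d\,\omega+\tfrac{1-\lambda}{n}\uno_{dn}$, a positive operator with eigenvalue $\lambda d+\tfrac{1-\lambda}{n}$ on $\C|\psi\rangle$ and $\tfrac{1-\lambda}{n}$ on its orthogonal complement, and $(id\otimes\mathcal{E}_\lambda)(d\,\omega)=\lambda d\,\omega\oplus(1-\lambda)\uno_d$.

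The heart of the argument is then the norm of these covariant operators in $M_d(S_p^n)$, and this is where I would invoke Theorem~\ref{Theorem I: teleportation-general}: its completely positive complete isometry realizes $S_p^n$ inside a space of the form $S_q^{d}(\ell_p^{\,\cdots})$ on which the norm of a diagonal (covariant) element is an explicit weighted $\ell_p$-sum, while the accompanying completely positive complete contraction supplies the matching reverse estimate. This pins down the two endpoint values $\|\omega\|_{M_d(S_p^n)}=d^{-1/p}$ and $\|\uno_{dn}\|_{M_d(S_p^n)}=n^{1/p}$ as genuine identities, together with the $\ell_p$-orthogonality of $\omega$ and $\uno_{dn}-\omega$, whence
\[\big\|\lambda d\,\omega+\tfrac{1-\lambda}{n}\uno_{dn}\big\|_{M_d(S_p^n)}^{p}=\tfrac1d\big(\lambda d+\tfrac{1-\lambda}{n}\big)^{p}+\big(n-\tfrac1d\big)\big(\tfrac{1-\lambda}{n}\big)^{p},\]
which is exactly (\ref{p-norm depol}). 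The erasure estimate (\ref{p-norm erasure}) follows in the same way, using in addition that the scalar summand $\C$ keeps its (trivial) operator space structure, so its $M_d$-amplification carries the operator norm and contributes $\|\uno_d\|=1$ rather than $d^{1/p}$.

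For the capacities I would feed the explicit functions $p\mapsto\|\mathcal{D}_\lambda\|_d$ and $p\mapsto\|\mathcal{E}_\lambda\|_d$ into the covariant capacity formula of \cite{JuPa2}, observing that each equals $1$ at $p=1$, and differentiate. Writing $\alpha=\lambda+\tfrac{1-\lambda}{nd}$ and $\beta=\tfrac{1-\lambda}{nd}$, a short computation gives (up to the choice of base for the logarithm) $\frac{d}{dp}\|\mathcal{D}_\lambda\|_d\big|_{p=1}=\ln d+\alpha\ln\alpha+(nd-1)\beta\ln\beta$; adding the output-entropy constant $\ln n$ produces the stated $C_{prod}^{d}(\mathcal{D}_\lambda)$, and the analogous differentiation for $\mathcal{E}_\lambda$, with its maximal-output-entropy constant $\lambda\ln n-\lambda\ln\lambda-(1-\lambda)\ln(1-\lambda)$, collapses to $\lambda\ln(nd)$. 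I expect the genuine obstacle to lie entirely in the mixed-norm step: both proving that the maximally entangled state is truly extremal (not merely a good test input) and fixing the exact weights $\tfrac1d$ and $n-\tfrac1d$ require the completely positive, completely isometric factorizations of Theorem~\ref{Theorem I: teleportation-general}, whereas the differentiation and the identification of the correct additive constant (which differs for the non-unital erasure channel) are then routine.
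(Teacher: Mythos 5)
Your reduction of the supremum to the maximally entangled input is the step that fails, and it is precisely where the whole difficulty of (\ref{p-norm depol}) lies. The functional $\rho\mapsto\big\|(id_d\otimes\mathcal{D}_\lambda)(\rho)\big\|$ appearing in the output $p$-norm is convex, and the covariance group acts by unitaries that leave the output norm invariant; hence averaging (twirling) an input over the group can only \emph{decrease} the output norm, by the triangle inequality for the integral (and the input norm shrinks as well, so not even the ratio is controlled). For a supremum of a convex functional this inequality points the wrong way: invariant inputs give a lower bound, but you cannot conclude that an extremal input may be taken invariant --- extremality pushes towards pure states, i.e.\ away from twirled states. Moreover, for $d<n$ the group $\{\bar u\otimes u\}$ does not act on $\C^d\otimes\C^n$ as written; embedding $u\in\mathbb U(d)\subset\mathbb U(n)$, the invariant positive elements are not spanned by $\omega$ and its complementary projection (they contain, e.g., everything of the form $\uno_d\otimes\sigma$ supported on $\C^d\otimes\C^{n-d}$). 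So the claim that the optimizer is a multiple of $\omega$ --- which is true, since it matches the lower bound --- is exactly the content of the hard upper-bound direction, and your symmetrization does not prove it.

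Relatedly, you invoke Theorem \ref{Theorem I: teleportation-general} in the wrong place. The norms $\|\omega\|$, $\|\uno_{dn}\|$ and $\big\|\lambda d\,\omega+\tfrac{1-\lambda}{n}\uno_{dn}\big\|$ of your covariant outputs are elementary: these are explicit positive matrices whose Schatten norms follow from their eigenvalues, and this eigenvalue computation is exactly how the paper obtains the \emph{lower} bound by testing at $\omega$. Where the embedding theorem is genuinely needed is the upper bound over \emph{arbitrary} inputs: after restricting to pure states and using the Schmidt decomposition together with unitary covariance (Proposition \ref{prop upper bound by new channel}, Lemma \ref{lemma V}), the paper dominates the $d$-norm by the cb-norm of an auxiliary map $\theta_\lambda^{d,p}:S_1^d\to S_p^d\oplus_p S_p^d$, and then uses the completely isometric embeddings $j_1$, $\tilde J_p$ of Theorem \ref{Theorem I: teleportation-general} to factor $\tilde J_p\circ\theta_\lambda^{d,p}=(id_d\otimes\Psi_{\alpha,\beta,\gamma})\circ j_1$ through a map between \emph{commutative} spaces, whose cb-norm is computed exactly (Proposition \ref{prop factorization}, Lemma \ref{lemma norm commutative}, Corollary \ref{corollary: prop upper bound}); this is what controls inputs with non-uniform Schmidt coefficients, which your proposal never addresses. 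Two smaller points: the erasure identity (\ref{p-norm erasure}) needs none of this machinery --- its upper bound is a direct $\ell_p$-direct-sum estimate using $\|id_n:S_1^n\to S_p^n\|_d=d^{1-1/p}$ --- so ``follows in the same way'' both inherits the flawed step and misses the simpler route; on the other hand, your differentiation of the two norm functions at $p=1$ and the additive constants (including the $H(\lambda,1-\lambda)$ bookkeeping for the non-unital erasure channel) do agree with the paper's covariant capacity formula and would be fine once the norm identities are actually established.
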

Both expressions $C_{prod}^d(\mathcal \mathcal D_\lambda)$ and $C_{prod}^d(\mathcal E_\lambda)$ extend the previously known expressions for the cases $d=1$ and $d=n$. This is very surprising in view of the fact that for the depolarizing channel the formula $C_{prod}^d(\mathcal \mathcal D_\lambda)$ is not multiplicative and, hence, $C^d(\mathcal \mathcal D_\lambda)$ does not coincide with $C_{prod}^d(\mathcal \mathcal D_\lambda)$. Indeed, we have the following corollary of the previous theorem.
\begin{corollary}\label{Corollary Depolarizing}
Let us fix $n=4$, $d=2$ and $\lambda\in (0,1)$. Then,
\begin{align*}
C_{prod}^{d^2}(\mathcal \mathcal D_\lambda\otimes \mathcal \mathcal D_\lambda)>2C_{prod}^d(\mathcal \mathcal D_\lambda).
\end{align*}Hence,
\begin{align*}
C^d(\mathcal \mathcal D_\lambda)>C_{prod}^d(\mathcal \mathcal D_\lambda).
\end{align*}
\end{corollary}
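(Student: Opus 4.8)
The second inequality is immediate from the first. By the definition \eqref{regularization} of the regularized capacity, the term $k=2$ gives $C^d(\mathcal D_\lambda)\ge\tfrac12\,C_{prod}^{d^2}(\mathcal D_\lambda\otimes\mathcal D_\lambda)$, so once we know $C_{prod}^{d^2}(\mathcal D_\lambda\otimes\mathcal D_\lambda)>2\,C_{prod}^d(\mathcal D_\lambda)$ we get $C^d(\mathcal D_\lambda)>C_{prod}^d(\mathcal D_\lambda)$. Thus the whole content is the strict super-additivity of the product-state capacity at $n=4$, $d=2$.

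The plan is to write both capacities in the form \emph{(dimensional constant)} minus \emph{(assisted output entropy)} and to compare the entropies. The channel $\mathcal D_\lambda\otimes\mathcal D_\lambda\colon S_1^{n^2}\to S_1^{n^2}$ is covariant (now under $U(n)\times U(n)$), so \cite[Corollary 4.2]{JuPa2} applies with assistance $d^2$; since the derivative at $p=1$ of $\|Y\|_p$ for a trace-one positive $Y$ equals minus the Shannon entropy of its spectrum, this expresses $C_{prod}^{d^2}(\mathcal D_\lambda\otimes\mathcal D_\lambda)$ as $2\log_2(nd)-\min_{\omega}H(\omega)$, the minimum running over the admissible $d^2$-assisted inputs, in the same normalization in which Theorem \ref{Main theorem Depolarizing} reads $C_{prod}^d(\mathcal D_\lambda)=\log_2(nd)-H(P)$. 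Here $P$ is the probability vector with one entry $\lambda+\frac{1-\lambda}{nd}$ and $nd-1$ entries $\frac{1-\lambda}{nd}$ (the single-channel minimizer), and the dimensional constants match because $\log_2(n^2d^2)=2\log_2(nd)$. Hence the claimed inequality is precisely the strict sub-additivity $\min_{\omega}H(\omega)<2H(P)$, and to prove it it suffices to exhibit one admissible input for $\mathcal D_\lambda\otimes\mathcal D_\lambda$ whose output spectrum $Q$ satisfies $H(Q)<2H(P)$.

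First I would produce such an input. Writing $\mathcal D_\lambda=\lambda\,\id+(1-\lambda)\,T$ with $T(\rho)=\tfrac1n\tr(\rho)\,\uno_n$, one has
\[\mathcal D_\lambda\otimes\mathcal D_\lambda=\lambda^2\,\id\otimes\id+\lambda(1-\lambda)\big(\id\otimes T+T\otimes\id\big)+(1-\lambda)^2\,T\otimes T.\]
On the maximally entangled pure input between the reference $\ell_2^{d^2}$ and the product subspace $\ell_2^d\otimes\ell_2^d\subset\ell_2^n\otimes\ell_2^n$ this merely reproduces the product strategy $\big(\lambda\,|\Omega\rangle\langle\Omega|+\tfrac{1-\lambda}{nd}\,\uno\big)^{\otimes 2}$, whose spectrum gives entropy exactly $2H(P)$; the gain must therefore come from a genuinely entangled input, i.e.\ from maximally entangling $\ell_2^{d^2}$ with an entangled $d^2$-dimensional subspace $V\subset\ell_2^n\otimes\ell_2^n$ (for $n=4$, $d=2$ one can take $V$ spanned by four orthonormal maximally entangled vectors of $\ell_2^4\otimes\ell_2^4$). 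I would then compute $Q=Q(\lambda)$ from the four-term decomposition: the $\id\otimes\id$ term keeps the rank-one projector onto the entangled input (weight $\lambda^2$), the $T\otimes T$ term contributes the flat part $\tfrac{(1-\lambda)^2}{n^2d^2}\,\uno$, and the mixed terms $\id\otimes T$, $T\otimes\id$ redistribute the remaining weight; on an entangled $V$ these mixed terms reinforce the dominant eigenvalue instead of merely tensoring two single-channel spectra, producing a strictly more peaked $Q$.

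With $Q(\lambda)$ in hand, the claim reduces to the scalar inequality $H(Q(\lambda))<2H(P(\lambda))$ for all $\lambda\in(0,1)$. I would prove it by checking equality at the endpoints $\lambda\in\{0,1\}$ (where $\mathcal D_\lambda$ is the identity or the completely depolarizing channel and the two strategies coincide) and then strict positivity of $2H(P)-H(Q)$ in between, via a sign analysis of its derivative or the strict convexity of the relevant scalar function. I expect the hard part to be twofold: identifying the entangled subspace $V$ and correctly counting the multiplicities in $Q$, since the mixed terms $\id\otimes T$ and $T\otimes\id$ do not factor through the single-channel computation; and the delicate calculus certifying the strict gap uniformly on the open interval, where the advantage over the product strategy is numerically small.
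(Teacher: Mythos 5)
Your reduction of the second inequality to the first via the regularization (\ref{regularization}) is correct and is the same as the paper's. The gap lies in your treatment of the first inequality. You dismiss product-input strategies because the \emph{symmetric} one (a $d$-dimensional maximally entangled state fed to each factor) only reproduces $2H(P)$, and you conclude that ``the gain must come from a genuinely entangled input.'' This overlooks the \emph{asymmetric} product strategy, which is precisely what the paper uses: since $d^2=4\leq n=4$, one can devote the entire $d^2$-dimensional assistance to the first copy and use the second copy with no assistance at all. This yields inequality (\ref{trivial lower bound for the tensor}),
\begin{align*}
C_{prod}^{d^2}(\mathcal D_\lambda\otimes \mathcal D_\lambda)\ \geq\ C_{prod}^{d^2}(\mathcal D_\lambda)+C_{prod}^{1}(\mathcal D_\lambda),
\end{align*}
proved simply by restricting the relevant norm to elementary tensors $y\otimes z$. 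Because Theorem \ref{Main theorem Depolarizing} provides a closed formula for $C_{prod}^{d}(\mathcal D_\lambda)$ for \emph{every} assistance dimension $1\leq d\leq n$, the whole corollary then reduces to checking that the explicit scalar function $f(4,2,\lambda)=C_{prod}^{4}(\mathcal D_\lambda)+C_{prod}^{1}(\mathcal D_\lambda)-2C_{prod}^{2}(\mathcal D_\lambda)$ is strictly positive on $(0,1)$; no new channel analysis, and no entanglement between the two channel inputs, is needed.

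Your own route, besides being left incomplete at its crucial step (the spectrum $Q$ is never computed), rests on a heuristic that points the wrong way. Take $V\subset \C^4\otimes\C^4$ spanned by orthonormal maximally entangled vectors $v_i=(u_i\otimes \uno)\overline{\psi}_4$ with $\tr(u_i^*u_j)=4\delta_{ij}$, and let $|\psi_V\rangle$ be maximally entangled between the reference and $V$. The cross terms do not ``reinforce the dominant eigenvalue'': one computes
\begin{align*}
(\id\otimes\id\otimes \tr)\big(|\psi_V\rangle\langle\psi_V|\big)=\frac{1}{16}\sum_{i,j=1}^4|i\rangle\langle j|\otimes u_iu_j^*,
\end{align*}
which is the maximally mixed state on a four-dimensional subspace (four eigenvalues $1/4$), whereas the product subspace gives a rank-two marginal with eigenvalues $1/2$. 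Thus the entangled input makes the terms $\id\otimes T$ and $T\otimes\id$ \emph{flatter}, not more peaked, and a direct purity computation confirms $\tr(\rho_Q^2)<\tr\big((\rho_P^{\otimes 2})^2\big)$ strictly for $\lambda\in(0,1)$ (the coefficients of $\lambda^3(1-\lambda)$ are $1/4$ versus $1/2$, and of $\lambda^2(1-\lambda)^2$ are $3/16$ versus $5/16$). This is consistent with King's additivity theorem for the depolarizing channel \cite{King}: entangled inputs are not expected to lower the output entropy. So the inequality $H(Q)<2H(P)$ on which your argument hinges is unproven and almost certainly false for the inputs you propose; the actual source of non-additivity here is the uneven allocation of the entanglement resource, not input entanglement across the channels.
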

Interestingly, the quantity $C^d(\mathcal \mathcal D_\lambda)$ has been also studied in some other works by using different techniques (\cite{HsWi}, \cite{WiHs}) and its  exact value seems to be unknown. On the other hand, we will show that $C_{prod}^d$ is multiplicative on the quantum erasure channel $\mathcal E_\lambda$ and we will use this estimate to bound the value $C^d(\mathcal \mathcal D_\lambda)-C_{prod}^d(\mathcal \mathcal D_\lambda)$. More precisely, we will prove the following result.
\begin{theorem}\label{Main Theorem Erasure}
Let $\mathcal \mathcal D_\lambda:S_1^n\rightarrow S_1^n$ and  $\mathcal E_\lambda:S_1^n\rightarrow S_1^n\oplus_1 \C$ be respectively the quantum depolarizing channel and the quantum erasure channel with parameter $\lambda\in [0,1]$ and let $d$ be any natural number such that $1\leq d\leq n$. Then,
\begin{align}\label{sharp bounds dep}
\lambda \log_2 (nd)- H(\mu)\leq C_{prod}^d(\mathcal D_\lambda)\leq C^d(\mathcal D_\lambda)\leq \lambda \log_2 (nd).
\end{align}Here, $H(\mu)=-\mu\log_2(\mu)-(1-\mu)\log_2(1-\mu)$ is called the Shannon entropy of the probability distribution $(\mu, 1-\mu)$, where $\mu=\lambda+\frac{1-\lambda}{nd}$. In particular, $\lambda \log_2 (nd)- 1\leq C_{prod}^d(\mathcal D_\lambda)$. On the other hand,
\begin{align}\label{Eq multiplicativity erasure}
C^{d^k}_{prod}(\mathcal E_\lambda^{\otimes_k})= kC^d_{prod}(\mathcal E_\lambda)=k\lambda \log_2 (nd).
\end{align}Hence, $$C^d(\mathcal E_\lambda)=\lambda \log_2 (nd).$$
\end{theorem}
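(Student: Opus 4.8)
The plan is to dispatch the erasure channel first and then leverage it, via data processing, to control the depolarizing channel, while the lower bound for $\mathcal D_\lambda$ comes from a direct manipulation of the exact formula already recorded in Theorem \ref{Main theorem Depolarizing}. Throughout I will use the covariant capacity identity of \cite[Corollary 4.2]{JuPa2}, which expresses $C_{prod}^d$ of a covariant channel in terms of the input dimension and the derivative at $p=1$ of the associated $d$-norm, together with the explicit $d$-norm formulas \eqref{p-norm depol} and \eqref{p-norm erasure} and the definition \eqref{regularization} of the regularized capacity.

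For the lower bound on $C_{prod}^d(\mathcal D_\lambda)$ I would set $\mu=\lambda+\frac{1-\lambda}{nd}$ and rewrite the exact expression from Theorem \ref{Main theorem Depolarizing}. Using $(nd-1)\frac{1-\lambda}{nd}=1-\mu$ and $\frac{1-\lambda}{nd}=\frac{1-\mu}{nd-1}$, it collapses to
\begin{align*}
C_{prod}^d(\mathcal D_\lambda)=\log_2(nd)-H(\mu)-(1-\mu)\log_2(nd-1).
\end{align*}
Hence the claimed inequality $C_{prod}^d(\mathcal D_\lambda)\geq \lambda\log_2(nd)-H(\mu)$ is equivalent to $(1-\lambda)\log_2(nd)\geq (1-\mu)\log_2(nd-1)$, and since $1-\mu=(1-\lambda)\frac{nd-1}{nd}$ this reduces to the elementary bound $\log_2(nd)\geq\frac{nd-1}{nd}\log_2(nd-1)$, which is immediate. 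The estimate $H(\mu)\leq 1$ then yields the stated consequence $C_{prod}^d(\mathcal D_\lambda)\geq\lambda\log_2(nd)-1$.

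The heart of the argument is the multiplicativity \eqref{Eq multiplicativity erasure}, which I would deduce from an exact multiplicativity of the $d$-norm itself, $\|\mathcal E_\lambda^{\otimes k}:S_1^{n^k}\to (S_p^n\oplus_p\C)^{\otimes k}\|_{d^k}=\|\mathcal E_\lambda:S_1^n\to S_p^n\oplus_p\C\|_d^k$. The key is the block structure of the output: one identifies $(S_p^n\oplus_p\C)^{\otimes k}$ with the $\ell_p$-direct sum $\bigoplus_{S\subseteq\{1,\dots,k\}}S_p^{n^{|S^c|}}$ indexed by the erasure patterns $S$, on which $\mathcal E_\lambda^{\otimes k}$ acts as the partial trace over $S$ with weight $\lambda^{|S^c|}(1-\lambda)^{|S|}$. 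Computing the amplified $p$-norm of each block — where each surviving tensor slot contributes the factor $d^{p-1}$ familiar from the single-copy calculation behind \eqref{p-norm erasure} and the erased slots contribute no amplification gain — gives the contribution $\binom{k}{j}\lambda^{pj}(1-\lambda)^{p(k-j)}d^{(p-1)j}$ from the patterns surviving on $j$ positions, and the binomial theorem sums these to $(\lambda^p d^{p-1}+(1-\lambda)^p)^k$. Taking logarithms, differentiating at $p=1$, and inserting into the covariant capacity identity (the input dimension $n^k$ contributing $k\log_2 n$) converts this into $C_{prod}^{d^k}(\mathcal E_\lambda^{\otimes k})=kC_{prod}^d(\mathcal E_\lambda)=k\lambda\log_2(nd)$; feeding this into \eqref{regularization} gives $C^d(\mathcal E_\lambda)=\lambda\log_2(nd)$.

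Finally, for the depolarizing upper bound I would use the factorization $\mathcal D_\lambda=T\circ\mathcal E_\lambda$ with the completely positive trace preserving channel $T(\sigma\oplus c)=\sigma+\frac{c}{n}\uno_n$. Operational monotonicity of the product-state capacity under such post-processing gives $C_{prod}^{d^k}(\mathcal D_\lambda^{\otimes k})=C_{prod}^{d^k}(T^{\otimes k}\circ\mathcal E_\lambda^{\otimes k})\leq C_{prod}^{d^k}(\mathcal E_\lambda^{\otimes k})$, whence $C^d(\mathcal D_\lambda)\leq C^d(\mathcal E_\lambda)=\lambda\log_2(nd)$ after dividing by $k$ and taking the supremum; the middle inequality $C_{prod}^d(\mathcal D_\lambda)\leq C^d(\mathcal D_\lambda)$ is just the $k=1$ term of \eqref{regularization}. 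The main obstacle is the exact evaluation of the amplified $p$-norm of $\mathcal E_\lambda^{\otimes k}$: justifying the $\ell_p$-direct-sum decomposition of the tensor power of the direct-sum output and, above all, pinning down the per-surviving-slot dimension factor $d^{p-1}$ under the $M_{d^k}$ amplification, which is exactly where the embedding and teleportation/superdense techniques of Theorems \ref{Theorem I: teleportation-general} and \ref{Theorem superdense embedding} do the real work.
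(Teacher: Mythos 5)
Your two depolarizing bounds are fine and coincide with the paper's own argument: the lower bound in \eqref{sharp bounds dep} is the same algebraic rewriting of the exact formula from Theorem \ref{Main theorem Depolarizing}, and the upper bound via post-processing $\mathcal D_\lambda=T\circ\mathcal E_\lambda$, $T(\sigma\oplus c)=\sigma+\frac{c}{n}\uno_n$, is exactly the paper's observation that $C^d(\mathcal D_\lambda)\leq C^d(\mathcal E_\lambda)$. The problem is the heart of the theorem, Equation \eqref{Eq multiplicativity erasure}, which you propose to deduce from exact multiplicativity of the $d$-norm, $\|\mathcal E_\lambda^{\otimes k}\|_{d^k}=\|\mathcal E_\lambda\|_d^k$. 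That claim is not merely the ``main obstacle'' you acknowledge; it is false. Take $k=2$ and $d^2\leq n$ (e.g.\ $n=4$, $d=2$), and test $id_{d^2}\otimes\mathcal E_\lambda^{\otimes 2}$ on the positive element $\rho=|\psi_{d^2}\rangle\langle\psi_{d^2}|\otimes|e_1\rangle\langle e_1|$, where $|\psi_{d^2}\rangle$ is maximally entangled between the $d^2$-dimensional ancilla and the \emph{first} tensor slot alone. Then $\|\rho\|_{S_p^{d^2}(S_1^{n^2})}=d^{-2/p'}$, while the four output blocks have $S_p$-norms $\lambda^2$, $\lambda(1-\lambda)$, $\lambda(1-\lambda)d^{-2/p'}$ and $(1-\lambda)^2 d^{-2/p'}$, so that
\begin{align*}
\frac{\big\|(id_{d^2}\otimes\mathcal E_\lambda^{\otimes 2})(\rho)\big\|^p}{\|\rho\|_{S_p^{d^2}(S_1^{n^2})}^p}
=\big(\lambda^p d^{2(p-1)}+(1-\lambda)^p\big)\big(\lambda^p+(1-\lambda)^p\big)
=\big(\lambda^p d^{p-1}+(1-\lambda)^p\big)^2+\lambda^p(1-\lambda)^p\big(d^{p-1}-1\big)^2,
\end{align*}
which strictly exceeds the multiplicative value $\big(\lambda^p d^{p-1}+(1-\lambda)^p\big)^2$ for every $p>1$, $\lambda\in(0,1)$, $d\geq 2$. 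Equivalently, your per-block claim that ``each surviving slot contributes $d^{p-1}$'' fails: all $d^k$ dimensions of assisted entanglement can be concentrated on the surviving slots, and the correct trivial bound for the block with surviving set $A$ is $\min(d^k,n^{|A|})^{p-1}$, not $d^{(p-1)|A|}$. This concentration phenomenon is precisely what drives the nonmultiplicativity in Corollary \ref{Corollary Depolarizing}, and it is why the authors state explicitly that they could not find a good expression for the $d^k$-norm of $\mathcal E_\lambda^{\otimes k}$ and computed $C^{d^k}_{prod}(\mathcal E_\lambda^{\otimes k})$ directly instead.

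The capacity is nonetheless multiplicative because the excess term $\lambda^p(1-\lambda)^p(d^{p-1}-1)^2$ above (and its higher-order relatives) vanishes to first order at $p=1$; but capturing this requires working at the level of entropies rather than norms. The paper's route is: covariance of $\mathcal E_\lambda^{\otimes k}$ together with Proposition \ref{prop:C^d_{prod} for covariant direct sume} expresses $C^{d^k}_{prod}(\mathcal E_\lambda^{\otimes k})$ through the entropic functional $V_{d^k}$; Lemma \ref{lemma:Upper bound C^d_{prod} erasure} splits that functional over the erasure patterns; and the key estimate $V_{d^k}(\Ne_s)\leq\frac{s}{k}\ln d^k$ (Lemma \ref{lemma add erasure}) is proved by a purification argument plus an iterated application of strong subadditivity (Theorem \ref{SSI}) and induction on $k$. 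Some substitute for strong subadditivity is unavoidable here, and your norm-multiplicativity shortcut cannot supply it; the teleportation/superdense embeddings of Theorems \ref{Theorem I: teleportation-general} and \ref{Theorem superdense embedding} enter the paper only through the single-copy formulas \eqref{p-norm depol} and \eqref{p-norm erasure}, not through any tensor-power norm identity.
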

The paper is organized as follows. In Section \ref{Sec: Quantum teleportation revised} we will first introduce some basic notions about operator spaces and noncommutative $L_p$-spaces that we will use along the whole paper. Then, we will prove Theorem \ref{Theorem I: teleportation-general} and Theorem \ref{Theorem superdense embedding}. In Section \ref{Sec: Some results about covariant channels} we will introduce some basic notions about quantum channels and we will explain why computations are easier when we deal with covariant channels. Section \ref{Sec: Depolarizing channel} is devoted to analyzing the quantum depolarizing channel. There, we will prove those parts of Theorem  \ref{Main theorem Depolarizing} and Theorem \ref{Main Theorem Erasure} corresponding to this channel and we will also prove Corollary \ref{Corollary Depolarizing}. Finally, in Section \ref{Sec: Erasure channel} we will study the quantum erasure channel. In particular, we will show the second part of Theorem \ref{Main theorem Depolarizing} and Theorem \ref{Main Theorem Erasure}.
\section{Quantum teleportation revised: Some sharp embeddings between noncommutative $L_p$-spaces}\label{Sec: Quantum teleportation revised}
\subsection{Some basic notions about operator spaces and noncommutative $L_p$-spaces}\label{Sec: Basic notions}
In this section we introduce some basic concepts from operator space theory. We focus only on those aspects which are useful for this work and we direct the interested reader to the standard references \cite{EffrosRuan}, \cite{Pisierbook}. Given Hilbert spaces $\mathcal H$ and $\mathcal K$, we will denote by $B(\mathcal H, \mathcal K)$ the space of bounded operators from  $\mathcal H$ to $\mathcal K$ endowed with the standard operator norm. When $\mathcal H=\ell_2^n$ and $\mathcal K=\ell_2^m$ we will denote $M_{n,m}=B(\ell_2^n, \ell_2^m)$ and in the case where $n=m$ we will just write $M_n$.

An \emph{operator space} $E$ is a complex Banach space together with a sequence of \emph{matrix norms} $\|\cdot\|_k$ on $M_k[E]=M_k\otimes E$ satisfying the following conditions:
\begin{itemize}\label{OS properties}
\item $\|v \oplus w\|_{k+l} = \max\{\|v\|_k,\|w\|_l \}$ and
\item $\|\alpha w \beta\|_k \leq\|\alpha\| \, \|w\|_l \, \|\beta\|$
\end{itemize}
for all $v \in M_k[E]$, $w \in M_l[E]$, $\alpha \in M_{k,l}$, and $\beta \in M_{l,k}$. A simple, but important, example of an operator space is $M_n$ with its operator space structure given by the usual sequence of matrix norms $\|\cdot\|_k$ defined by the identification $M_k[M_n]=M_{kn}$.

To understand this theory, one needs to study the morphisms that preserve the operator space structure. In contrast to Banach space theory, where one needs to study the bounded maps between Banach spaces, in the theory of operator spaces we need to study the \emph{completely bounded maps}. Given operator spaces $E$ and $F$ and a linear map $T:E \to F$, let $T_k: M_k[E] \to M_k[W]$ denote the linear map defined by
\[
T_k(v)=(id_k \otimes T)(v)=(T(v_{ij}))_{i,j}.
\]
The map $T$ is said to be \emph{completely bounded} if
\[
\|T\|_{cb} = \sup_n \|T_k\|< \infty,
\]
and this quantity is then called the \emph{completely bounded} norm of $T$. We will say that $T$ is \emph{completely contractive} if $\|T\|_{cb}\leq 1$. Moreover, $T$ is said to be a \emph{complete isomorphism} (resp. \emph{complete isometry}) if each map $T_k$ is an isomorphism (resp.  an isometry).

As in Banach space theory, we can also consider the notion of duality. Given an operator space $E$, we define the \emph{dual operator space} $E^*$ by means of the acceptable matrix norms
\begin{align*}
M_k[E^*]=CB(E,M_k), \text{    } k\geq 1.
\end{align*}
If we denote by $S_1^n$ the space $M_n$ with the trace norm, the duality relation $S_1^n=M_n^*$ allows us to define a natural operator space structure on $S_1^n$. This operator space structure is not given by the linear map identifying matrices in $S_1^n$ with matrices in $M_n$, but the right duality action is the scalar pairing
\begin{align}\label{Scalar Pairing}
\langle B, C\rangle =\mathrm{tr}\, (BC^{tr}),
\end{align}
which yields completely isometric isomorphisms $M_n^* = S_1^n$ and $(S_1^n)^* = M_n$. It is not difficult to see that $\|T^*\|_{cb}=\|T\|_{cb}$ for every $T:E \to F$, where $T^*$ denotes the adjoint map of $T$.

There is an equivalent definition of operator spaces, as those closed subspaces of $B(\mathcal H)$. On the one hand, given a subspace $E\subset B(\mathcal H)$ it is clear that we have a family of matrix norms, by identifying $M_k[E]\subset M_k(B(\mathcal H))=B(\ell_2^k\otimes \mathcal H)$, which can be shown to be an \emph{acceptable} sequence of matrix norms. The converse statement is known as Ruan's Theorem and can be found in \cite[Theorem 2.3.5]{EffrosRuan}. This point of view is very suitable to define the minimal tensor product of operator spaces. Given two operator spaces $E \hookrightarrow B(\mathcal H_E)$ and $F \hookrightarrow B(\mathcal H_F)$, we have a natural algebraic embedding of $E \otimes F$ in $B(\mathcal H_E \otimes \mathcal H_F)$. The \emph{minimal operator space tensor product} $E \otimes_{min} F$ is the closure of $E \otimes F$ in $B(\mathcal H_E \otimes \mathcal H_F)$. In particular, for every operator space $E$, one has that $M_n[E]=M_n\otimes_{min} E!
 $ isometr
 ically. One can check that for a couple of linear maps $T_1:E_1\rightarrow F_1$ and $T_2:E_2\rightarrow F_2$ one has
\begin{align}\label{mmp min norm}
\|T_1\otimes T_2:E_1\otimes_{min}E_2\rightarrow F_1\otimes_{min}F_2\|_{cb}=\|T_1:E_1\rightarrow F_1\|_{cb}\|T_2:E_2\rightarrow F_2\|_{cb}
\end{align}and that this tensor norm is commutative and associative (see \cite[Chapter 2]{Pisierbook}). Moreover, if $E$ and $F$ are finite dimensional, one can also check that we have the following completely isometric identification.
\begin{align*}
E \otimes_{min} F = CB(E^*,F),
\end{align*}where here the correspondence is defined by $\big(\sum_{i=1}^nv_i\otimes w_i\big)(v^*)=\sum_{i=1}^n\langle v_i,v^*\rangle w_i$.

The dual tensor norm of the minimal one is the so called \emph{projective tensor norm} (see \cite[Chapter 4]{Pisierbook}), $E\hat{\otimes} F$, which is defined for a given element $t\in M_k(E\otimes F)$, as
\begin{align*}
\|t\|_{M_k(E\hat{\otimes} F)}=\big\{\|\alpha\|_{M_{n,lm}}\|x\|_{M_l(E)}\|y\|_{M_m(E)}\|\beta\|_{M_{lm,n}}\big\},
\end{align*}where the infimum runs over all possible representations $t_{r,s}=\sum_{i,p,j,q}\alpha_{r,ip}\big(x_{ij}\otimes y_{pq})\beta_{jq,s}$ with $1\leq r\leq k$, $1\leq s\leq k$. This norm is also commutative and associative and for every finite dimensional operator spaces $E$ and $F$ one has the complete isometric identifications $$(E\otimes_{min}F)^*=E^*\hat{\otimes} F^* \text{       }\text{   and     } \text{       }(E\hat{\otimes}F)^*=E^*\otimes_{min} F^* .$$In particular, if we denote $S_1^n\hat{\otimes} E:=S_1^n[E]$, one has the completely isometric identification $\big(M_n[E]\big)^*=S_1^N[E^*]$. One can also check that
\begin{align}\label{mmp projective norm}
\|T_1\otimes T_2:E_1\hat{\otimes}E_2\rightarrow F_1\hat{\otimes}F_2\|_{cb}=\|T_1:E_1\rightarrow F_1\|_{cb}\|T_2:E_2\rightarrow F_2\|_{cb}
\end{align}for all linear maps $T_1:E_1\rightarrow F_1$ and $T_2:E_2\rightarrow F_2$.

Finally, given two operator spaces $E_0$ and $E_1$ which are compatible interpolation spaces in the sense of \cite[Section 2]{Pisierbook3}, one can define a natural operator spaces structure on $E_\theta=(E_0,E_1)_\theta$ by defining the following family of acceptable norms
\begin{align*}
M_k[E_\theta]=\big(M_k[E_0],M_k[E_1]\big)_\theta, \text{       }  k\geq 1.
\end{align*}As we explained in the introduction, this allows us to define a natural operator space structure on $S_p=(S_\infty, S_1)_{\frac{1}{p}}$ (resp. $S_p^n=(M_n, S_1^n)_{\frac{1}{p}}$) and, moreover, on $S_p[E]=(S_\infty[E], S_1[E])_{\frac{1}{p}}$ (resp. $S_p^n[E]=(M_n[E], S_1^n[E])_{\frac{1}{p}}$) for every operator space $E$. Here, $S_\infty$ denotes the space of compact operators on $\ell_2$ with the operator norm. As a particular case, the previous interpolation formula allows us to talk about the $p$-direct sum of operator spaces $\ell_p^n(E_i):=E_1\oplus_p \cdots \oplus_p E_n$. If $1<p<\infty$ and $\theta=\frac{1}{p}$, then for any compatible couple of operator spaces  $(E_0,E_1)$ the previous definition yields to the completely isometric identification $S_{p_\theta}[E_\theta]=(S_{p_0}[E_0],S_{p_1}[E_1])_{\theta}$, where $\frac{1}{\theta}=\frac{1-\theta}{p_0}+\frac{\theta}{p_1}$ and $E_\theta=(E_0,E_1)_\theta$ (see \cite[Theorem 1.1]{Pisierbook2}). Moreover, it w!
 as shown
 in \cite{Pisierbook2} that this definition of noncommutative $L_p$-spaces leads to the expected properties analogous to the classical ones. A very useful result, analogous to the classical case, states that given two couples of operator spaces ($E_0, E_1$) and ($F_0, F_1$), one has that
\begin{align}\label{interpolation step I}
\|T\|_{CB(E_\theta, F_\theta)}\leq \|T\|_{CB(E_0, F_0)}^{1-\theta}\|T\|_{CB(E_1, F_1)}^{\theta}.
\end{align}
According to the previous definition of the operator spaces $S_p[E]$ ($1\leq p<\infty$), it can be seen (\cite[Lemma 1.7]{Pisierbook2}, \cite[Theorem 1.5]{Pisierbook2}) that
\begin{align*}
\|Y\|_{M_d[S_p]}=\sup_{A,B\in B_{S_{2p}^d}}\big\|(A \otimes \uno)Y(B\otimes \uno)\big\|_{S_p(\ell_2^d\otimes_2\ell_2)}.
\end{align*}
and
\begin{align*}
\|X\|_{S_p[E]}=\inf \big\{\|A\|_{S_{2p}}\|Z\|_{B(\ell_2)\otimes_{\min}E}\|B\|_{S_{2p}}\big\},
\end{align*}where the last infimum runs over all representations of the form $X=\big(A\otimes \uno \big)Z\big(B\otimes \uno \big)$. Here, $B_{S_{2p}^d}$ denotes the unit ball $S_{2p}^d$ and $\uno$ denotes the identity operator in $B(\ell_2)$. We will usually denote by $\uno_n$ the identity matrix in $M_n$ appearing in the corresponding formulae for $\|Y\|_{M_d[S_p^n]}$ and $\|X\|_{S_p[E]}$.

In the second part of this work, we will mainly deal with the case $E=S_q^d$ for some $1\leq q\leq \infty$. It can be seen that, given $1\leq p,q\leq \infty$ and defining $\frac{1}{r}=|\frac{1}{p}-\frac{1}{q}|$, we have:

\noindent If $p\leq q$,
\begin{align}\label{norm p< q}
\|X\|_{S_p^n[S_q^d]}=\inf\Big\{\|A\|_{S_{2r}^n}\|Y\|_{S_q^{nd}}\|B\|_{S_{2r}^n}\Big\},
\end{align}where the infimum runs over all representations $X=(A\otimes \uno_d)Y(B\otimes \uno_d)$ with $A,B\in M_n$ and $Y\in M_n\otimes M_d$.

\noindent If $p\geq q$,
\begin{align}\label{norm p>q}
\|X\|_{S_p^n[S_q^d]}=\sup\Big\{\big\|(A\otimes \uno_d)X(B\otimes \uno_d)\big\|_{S_{q}^{nd}}: A,B\in B_{S_{2r}^n}\Big\}.
\end{align}
As an interesting application of this expression for the norm in $S_p[S_q]$ in \cite[Theorem 1.5 and Lemma 1.7]{Pisierbook2} Pisier showed that for a given linear map between operator spaces $T:E\rightarrow F$ we can compute its completely bounded norm as
\begin{align}\label{CB norm with general $t$}
\|T\|_{cb}=\sup_ {d\in \N}\big\|id_d\otimes T:S_t^d[E]\rightarrow S_t^d[F]\big\|
\end{align}for every $1\leq t\leq \infty$. That is, we can replace $\infty$ with any $1\leq t\leq \infty$ in order to compute the cb-norm.
\begin{remark}\label{(p,1)}
It is known (\cite{Aud2}, \cite{War}) that if $T$ is completely positive we can compute $\|T:S_q\rightarrow S_p\|$ by restricting to positive elements $A\in S_q$. Moreover, in this case one can also consider positive elements $X\geq 0$ to compute the cb-norm of $T$ (\cite[Section 3]{DJKR}) $\|T\|_{cb}=\|id_{S_q}\otimes T:S_q[S_q]\rightarrow S_q[S_p]\|$. On the other hand, for a positive element $X$, one can consider $A=B> 0$ in the expressions (\ref{norm p< q}) and  (\ref{norm p>q}) for $\|X\|_{S_p^n[S_q^d]}$. According to this, if $X> 0$ and $q=1$, (\ref{norm p>q}) becomes
\begin{align*}
\|X\|_{S_p^n[S_1^d]}=\sup_{A>0}\frac{\|(A\otimes \uno_d)X(A\otimes \uno_d)\|_{S_1^{nd}}}{\|A\|_{2p'}^2}
=\|(id_n\otimes tr_d)(X)\|_p,
\end{align*}where $\frac{1}{p}+\frac{1}{p'}=1$. Here and in the rest of the work we use notation $tr_n:=tr_{M_n}$.
\end{remark}
\subsection{General quantum teleportation}
We will start this section by introducing a family of unitaries which will be crucial in the rest of the work. For all $k,l=1,\cdots ,n$ we define the following unitaries on $\ell_2^n$: $$u_k(e_j)=e^{\frac{2\pi i kj}{n}}e_j  \text{     }\text{   and   }  \text{     } v_l(e_j)=e_{l+j}, \text{     }\text{   for every    }  \text{     }  j=1,\cdots, n,$$where $l+j$ will be always understood mod $n$. In this sense, we will understand $u_{-k}=u_{n-k}$ and $v_{-l}=v_{n-l}$ for any $k,l=1,\cdots ,n$. We will denote $\psi_n:= \sum_{i=1}^ne_i\otimes e_i\in \ell_2^n\otimes \ell_2^n$ and $\overline{\psi}_n:= \frac{1}{\sqrt{n}}\sum_{i=1}^ne_i\otimes e_i\in \ell_2^n\otimes \ell_2^n$. The following properties of the previous unitaries will be very useful in our analysis.
\begin{prop}\label{basic properties}

\

\begin{enumerate}
\item[a)] Let $p$ be any natural number in $\{1,\cdots, n\}$. Then,
\begin{align*}
\sum_{k=1}^ne^{\frac{2\pi ikp}{n}}=n\delta_{p,n}.
\end{align*}
%
%
\item[b)] For every $j,k=1,\cdots ,n$, we have $$e_j\otimes e_k=\frac{1}{\sqrt{n}}\sum_{s=1}^ne^{-\frac{2\pi isj}{n}}(u_s\otimes v_{k-j})(\overline{\psi}_n).$$
\item[c)] Let us define $\eta_{k,l}=(u_k\otimes v_l)(\overline{\psi}_n)$ for every $j,k=1,\cdots ,n$. Then, $(\eta_{k,l})_{k,l=1}^n$ is an orthonormal basis of $\ell_2^{n^2}=\ell_2^n\otimes_2\ell_2^n$.
\item[d)] Let $h\in\ell_2^n$. Then,
\begin{align}\label{shift}
h\otimes \overline{\psi}_n=\frac{1}{n}\sum_{k,l=1}^n\eta_{k,l}\otimes T_{k,l}(h),
\end{align}where we denote $T_{k,l}=v_lu_{-k}$ for every $k,l$. In particular, for every operator $\rho:\ell_2^n\rightarrow \ell_2^n$ we have
\begin{align*}
\rho\otimes |\overline{\psi}_n\rangle\langle\overline{\psi}_n |=\frac{1}{n^2}\sum_{k,l=1}^n\sum_{k',l'=1}^n|\eta_{k,l}\rangle\langle\eta_{k',l'}|\otimes T_{k,l}\rho T_{k',l'}^*.
\end{align*}Here, given two elements $\alpha$, $\beta$ in a Hilbert space $H$, we denote $|\alpha\rangle\langle\beta |:H\rightarrow H$ the rank one operator defined by $|\alpha\rangle\langle\beta |(h)=\langle\beta |h\rangle \alpha$. In particular, $|\alpha\rangle\langle\alpha |$ is the rank-one projection on $\alpha$.
\end{enumerate}
\end{prop}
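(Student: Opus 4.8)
The plan is to prove the four items in order, with (a) serving as the single computational engine behind (b)--(d). Part (a) is just the evaluation of a geometric sum over $n$-th roots of unity: if $p=n$ (equivalently $p\equiv 0 \pmod n$) every summand is $1$ and the sum is $n$, while if $1\le p\le n-1$ then $z=e^{2\pi i p/n}\neq 1$ satisfies $z^n=1$, so $\sum_{k=1}^n z^k=z\,\frac{z^n-1}{z-1}=0$. I would record this as the only genuine input; the rest is bookkeeping. For (b) I would expand the right-hand side directly: since $u_s(e_i)=e^{2\pi i si/n}e_i$ and $v_{k-j}(e_i)=e_{k-j+i}$, one has $(u_s\otimes v_{k-j})(\overline{\psi}_n)=\frac{1}{\sqrt n}\sum_{i=1}^n e^{2\pi i si/n}\,e_i\otimes e_{k-j+i}$; inserting the phase $e^{-2\pi i sj/n}$ and summing over $s$ first produces the factor $\sum_{s=1}^n e^{2\pi i s(i-j)/n}$, which by (a) equals $n\delta_{i,j}$ (indices read mod $n$). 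Only the term $i=j$ survives, and it gives $e_j\otimes e_k$.

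For (c), since there are exactly $n^2$ vectors $\eta_{k,l}$ in the $n^2$-dimensional space $\ell_2^n\otimes\ell_2^n$, it suffices to verify orthonormality. Writing $\eta_{k,l}=\frac{1}{\sqrt n}\sum_i e^{2\pi i ki/n}e_i\otimes e_{l+i}$ and computing $\langle\eta_{k,l},\eta_{k',l'}\rangle$, the two Kronecker deltas coming from the two tensor legs force $i=i'$ and then $l=l'$, leaving $\frac1n\sum_i e^{2\pi i(k-k')i/n}=\delta_{k,k'}$ by (a). Hence $\langle\eta_{k,l},\eta_{k',l'}\rangle=\delta_{k,k'}\delta_{l,l'}$, and the family is an orthonormal basis.

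The crux is (d), the teleportation identity. By (c) the family $\{\eta_{k,l}\}$ is an orthonormal basis of the first two legs of $\ell_2^n\otimes\ell_2^n\otimes\ell_2^n$, so the vector $h\otimes\overline{\psi}_n$ has a unique expansion $\sum_{k,l}\eta_{k,l}\otimes c_{k,l}$ with $\ell_2^n$-valued coefficients $c_{k,l}=(\langle\eta_{k,l}|\otimes\id)(h\otimes\overline{\psi}_n)$, the partial inner product over the first two legs taken in the paper's bra-ket convention (conjugate-linear in the first slot). By linearity it is enough to treat $h=e_j$: a short computation using $\langle\eta_{k,l},e_j\otimes e_m\rangle=\frac{1}{\sqrt n}e^{-2\pi i kj/n}\delta_{l+j,m}$ gives $c_{k,l}=\frac1n e^{-2\pi i kj/n}e_{l+j}$, and one checks directly that $T_{k,l}(e_j)=v_lu_{-k}(e_j)=e^{-2\pi i kj/n}e_{l+j}$, so $c_{k,l}=\frac1n T_{k,l}(h)$, which is exactly (\ref{shift}). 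The operator statement then follows formally: for a rank-one $\rho=|h\rangle\langle g|$ one has $\rho\otimes|\overline{\psi}_n\rangle\langle\overline{\psi}_n|=|h\otimes\overline{\psi}_n\rangle\langle g\otimes\overline{\psi}_n|$, and substituting (\ref{shift}) into both the ket and the bra, together with the rule $|Ax\rangle\langle By|=A|x\rangle\langle y|B^*$, produces the double sum with $T_{k,l}\rho\,T_{k',l'}^*$; linearity in $\rho$ finishes it.

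I expect the only delicate point to be (d): matching the sign of the phase so that the coefficient is $v_lu_{-k}$ and not its adjoint. This hinges on using the conjugate-linear-in-the-first-slot inner product consistent with the bra-ket notation fixed in the statement, and on reading every shift index modulo $n$; with the opposite convention one would recover $T_{k,l}^*$ instead. Everything else is mechanical once (a) is in hand.
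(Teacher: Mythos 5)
Your proposal is correct and takes essentially the same approach as the paper: parts a)--c) are the same computations, and in d) you deduce the operator identity from the vector identity via rank-one $\rho$ exactly as the paper does. The only (immaterial) difference is in the vector identity of d), where you expand $h\otimes\overline{\psi}_n$ in the orthonormal basis $(\eta_{k,l})_{k,l}$ from c) and identify the coefficients $\frac{1}{n}T_{k,l}(h)$ by partial inner products, while the paper obtains the same expansion by substituting identity b) into $h\otimes\overline{\psi}_n=\frac{1}{\sqrt{n}}\sum_{j,l}h_j\,e_j\otimes e_l\otimes e_l$; the two are the same phase bookkeeping run in opposite directions.
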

\begin{proof}
Part a) is trivial.
For the part b), we have
\begin{align*}
\frac{1}{\sqrt{n}}\sum_{s=1}^ne^{-\frac{2\pi isj}{n}}(u_s\otimes v_{k-j})(\overline{\psi}_n)&=\frac{1}{n}\sum_{s,l=1}^ne^{-\frac{2\pi isj}{n}}e^{\frac{2\pi isl}{n}}e_l\otimes e_{l+k-j}\\&
=\frac{1}{n}\sum_{l=1}^nn\delta_{l,j}e_l\otimes e_{l+k-j}=e_j\otimes e_k.
\end{align*}
In order to show part c) we first note that the fact that $u_k$ and $v_l$ are unitaries on $\ell_2^n$ guarantees that $(u_k\otimes v_l)$ is a unitary on $\ell_2^{n^2}$ for every $k,l$. Hence, since $\|\overline{\psi}_n\|=1$ we conclude that $\|\eta_{k,l}\|=1$ for every $j,k$. On the other hand, it is very easy to see that these vectors are orthogonal. Indeed, we have that
\begin{align*}
\langle \eta_{k',l'}, \eta_{k,l}\rangle&
=\frac{1}{n}\sum_{s,s'=1}^n e^{-\frac{2\pi ik's'}{n}}e^{\frac{2\pi iks}{n}}\langle e_{s'}\otimes e_{s'+l'} ,e_{s}\otimes e_{s+l}\rangle%
=\delta_{l,l'}\delta_{k,k'}.
\end{align*}
Finally, in order to show part d), let us consider $h=\sum_{j=1}^nh_je_j$. According to part c) above we have
\begin{align*}
h\otimes \overline{\psi}_n&=\frac{1}{\sqrt{n}}\sum_{j,l=1}^nh_je_j\otimes e_l\otimes e_l\\&=
\frac{1}{n}\sum_{j,l=1}^nh_j\big(\sum_{k=1}^ne^{-\frac{2\pi ikj}{n}}(u_k\otimes v_{l-j})(\overline{\psi}_n)\big)\otimes e_l\\&
=\frac{1}{n}\sum_{j,l,k=1}^nh_je^{-\frac{2\pi ikj}{n}}\eta_{k,l}\otimes e_{l+j}.
\end{align*}
On the other hand, note that $T_{k,l}(h)=\sum_{j=1}^nh_je^{-\frac{2\pi ikj}{n}}e_{l+j}$. Therefore,
\begin{align*}
h\otimes \overline{\psi}_n=\frac{1}{n}\sum_{k,l=1}^n\eta_{k,l}\otimes v_lu_{-k}(h).
\end{align*}
The second part of the statement can be obtained straightforwardly from the first one just looking at rank one operators $\rho=|h\rangle\langle k |$.
\end{proof}
\begin{corollary}\label{orth-complementation}
The linear map
\begin{align*}
i:\ell_p^{n^2}\rightarrow S_p^n[S_p^n] , \text{     }\text{   defined by    }\text{      } e_{k,l}\mapsto |\eta_{k,l}\rangle\langle\eta_{k,l}|, \text{   }k,l=1,\cdots, n,
\end{align*}is completely positive and a complete isometry and the linear map
\begin{align*}
P:S_p^n[S_p^n]\rightarrow \ell_p^{n^2} \text{     }\text{   defined by    }\text{      } P(A)=\sum_{k,l=1}^n \langle\eta_{k,l}|A|\eta_{k,l}\rangle e_{k,l}, \text{    } \text{    } A\in S_p^n[S_p^n],
\end{align*}is completely positive, and it is a completely contractive projection onto the image of $i$.

Moreover, for every operator space $E$ the map $i\otimes id_E$ defines a complete isometry of $\ell_p^{n^2}[E]$ onto a subspace of $S_p^{n^2}[E]$ which is completely complemented via $P\otimes id_E$.
\end{corollary}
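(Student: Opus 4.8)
The plan is to factor both $i$ and $P$ through the diagonal embedding of $\ell_p^{n^2}$ into $S_p^{n^2}$, using that by Proposition \ref{basic properties}(c) the vectors $(\eta_{k,l})_{k,l}$ form an orthonormal basis of $\ell_2^{n^2}=\ell_2^n\otimes_2\ell_2^n$. Let $W$ be the unitary on $\ell_2^{n^2}$ determined by $W(e_{k,l})=\eta_{k,l}$; it is well defined and unitary exactly because both families are orthonormal bases. The first point I would record is the Fubini-type identification $S_p^n[S_p^n]=S_p^{n^2}$, completely isometrically and as matricially ordered spaces. At the Banach level this is immediate from (\ref{norm p< q}) with $q=p$: then $1/r=0$, so $S_{2r}^n=M_n$, and taking $A=B=\uno_n$ gives $\|X\|_{S_p^n[S_p^n]}\leq\|X\|_{S_p^{n^2}}$, while any factorization $X=(A\otimes\uno_n)Y(B\otimes\uno_n)$ yields $\|X\|_{S_p^{n^2}}\leq\|A\|_{M_n}\|Y\|_{S_p^{n^2}}\|B\|_{M_n}$, so the two norms coincide; the complete (and completely positive) version is Pisier's Fubini theorem \cite{Pisierbook2}. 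I would use the same identification as $S_p^n[S_p^n[E]]=S_p^{n^2}[E]$ for the vector-valued statement (this also reconciles the target $S_p^{n^2}[E]$ written in the moreover clause).

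Next I would invoke two standard building blocks, both \emph{completely positive} and complete isometries/contractions. First, the diagonal embedding $\Delta\colon\ell_p^{n^2}\to S_p^{n^2}$, $e_{k,l}\mapsto|e_{k,l}\rangle\langle e_{k,l}|$, is a completely positive complete isometry, and the diagonal extraction $\Pi\colon S_p^{n^2}\to\ell_p^{n^2}$, $A\mapsto\sum_{k,l}\langle e_{k,l}|A|e_{k,l}\rangle e_{k,l}$, is a completely positive complete contraction with $\Pi\circ\Delta=id$ (``$\ell_p$ is the completely complemented diagonal of $S_p$'' \cite{Pisierbook2}). Second, conjugation $\mathrm{Ad}_W\colon A\mapsto WAW^*$ is a completely positive complete isometry of $S_p^{n^2}$: it is a $*$-automorphism of $M_{n^2}$, hence completely positive and isometric on $S_\infty^{n^2}$ and $S_1^{n^2}$, and the claim for $S_p^{n^2}$ follows by interpolation via (\ref{interpolation step I}).

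The computation then closes by the factorizations $i=\mathrm{Ad}_W\circ\Delta$ and $P=\Pi\circ\mathrm{Ad}_{W^*}$; the latter holds because $\langle\eta_{k,l}|A|\eta_{k,l}\rangle=\langle e_{k,l}|W^*AW|e_{k,l}\rangle$. Being composites of completely positive complete isometries (resp.\ complete contractions), $i$ is a completely positive complete isometry and $P$ a completely positive complete contraction, and $P\circ i=\Pi\circ\mathrm{Ad}_{W^*}\circ\mathrm{Ad}_W\circ\Delta=\Pi\circ\Delta=id$; hence $i\circ P$ is the asserted completely positive completely contractive projection of $S_p^n[S_p^n]$ onto $i(\ell_p^{n^2})$ (this is what ``$P$ is a projection onto the image of $i$'' should mean, since $P$ itself lands in $\ell_p^{n^2}$). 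For the vector-valued case I would tensor the whole diagram with $id_E$: the diagonal embedding $\Delta\otimes id_E$ and extraction $\Pi\otimes id_E$ stay a complete isometry and a complete contraction between $\ell_p^{n^2}[E]$ and $S_p^{n^2}[E]$ (the vector-valued diagonal statement), and $\mathrm{Ad}_W\otimes id_E$ (conjugation by $W$ on the $S_p$-leg) remains a complete isometry of $S_p^{n^2}[E]$; combined with $S_p^{n^2}[E]=S_p^n[S_p^n[E]]$ this gives the assertion.

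I expect the main obstacle to be purely at the level of operator-space (matricial) structure rather than in any single estimate: one must be sure that the Fubini identification, the diagonal embedding/extraction, and unitary conjugation are genuine \emph{complete} isometries and \emph{completely} positive, not merely isometric and positive. In each case I would verify the two $C^*$-endpoints $p=\infty$ and $p=1$---where the relevant maps are $*$-homomorphisms, conditional expectations, or isometries for $\otimes_{min}$ and $\widehat\otimes$---and then interpolate, using in addition that complete positivity passes to the interpolation spaces through the compatible matricial cones. The same endpoint-plus-interpolation argument, now applied to the $id_E$-amplified maps, is what underlies the vector-valued extension.
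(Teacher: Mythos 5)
Your proposal is correct and takes essentially the same route as the paper: the paper's proof is a one-line reduction, citing Proposition~\ref{basic properties}(c) (orthonormality of the $\eta_{k,l}$) together with Pisier's result \cite[Corollary 1.3]{Pisierbook2} that the diagonal of $S_p$ relative to an orthonormal basis is a completely positive, completely contractively complemented, completely isometric copy of $\ell_p$ (vector-valued version included). Your factorization $i=\mathrm{Ad}_W\circ\Delta$, $P=\Pi\circ\mathrm{Ad}_{W^*}$ together with the Fubini identification $S_p^n[S_p^n]=S_p^{n^2}$ just makes explicit the reduction to the canonical basis that this citation subsumes.
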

\begin{proof}
The proof is immediate from part c) of Proposition \ref{basic properties} (see for instance \cite[Corollary 1.3]{Pisierbook2}).
\end{proof}
For the following lemma we note that $|\psi_n\rangle\langle\psi_n|$ can be seen as an element of $M_n\otimes M_n$ by writing $|\psi_n\rangle\langle\psi_n|= \sum_{i,j=1}^ne_{i,j}\otimes e_{i,j}$\footnote{Note that here we are shifting the spaces: $|\psi_n\rangle\langle\psi_n|= \sum_{i,j=1}^n(e_i\otimes e_i)\otimes (e_j\otimes e_j)=\sum_{i,j=1}^n(e_i\otimes e_j)\otimes (e_i\otimes e_j)$.}. Moreover, we note that the corresponding map $S_1^n\rightarrow M_n$ is the identity map. Hence, $|\psi_n\rangle\langle\psi_n|$ is an element in the unit ball of $S_1^n\otimes_{min}M_n$.
\begin{lemma}\label{lemma: assist-ent estimate}
Let us define the linear map $\iota:M_n\rightarrow M_n\otimes M_n\otimes M_n$ as $$\rho\rightarrow \rho\otimes |\psi_n\rangle\langle\psi_n|.$$ Then, for every operator space $E$, $\iota$ verifies that
\begin{align*}
\big\|\iota\otimes id_E:S_1^n[E]\rightarrow S_1^n(S_1^n)[E]\otimes_{min}M_n\big\|_{cb}\leq 1.
\end{align*}
\end{lemma}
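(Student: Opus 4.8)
The plan is to reduce the estimate to a direct computation with Pisier's norm formulas, after first rewriting the target space. Since $S_1^n\hat{\otimes}S_1^n=S_1^{n^2}$ and $Z\otimes_{\min}M_n=M_n[Z]$ for every operator space $Z$, the codomain $S_1^n(S_1^n)[E]\otimes_{\min}M_n$ is completely isometric to $M_n\big[S_1^{n^2}[E]\big]$. Under this identification $\iota\otimes id_E$ sends an elementary tensor $\rho\otimes\xi$ to the $M_n$-valued matrix $\big[(\rho\otimes e_{ab})\otimes\xi\big]_{a,b=1}^n$: the $M_n$-index, which comes from the second leg of $|\psi_n\rangle\langle\psi_n|$, is forced to coincide with the inner $S_1^n$-index. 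I record also that $\iota$ is completely positive (being the tensor product with the positive element $|\psi_n\rangle\langle\psi_n|$), so by Remark \ref{(p,1)} it suffices to control the relevant norms on positive elements, and that $|\psi_n\rangle\langle\psi_n|$ lies in the unit ball of $S_1^n\otimes_{\min}M_n$, as already observed.

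First I would settle the scalar case $E=\C$ without amplification, which already contains the whole idea. By the supremum formula (\ref{norm p>q}) applied to $M_n[S_1^{n^2}]=S_\infty^n[S_1^{n^2}]$,
$$\|Y\|_{M_n[S_1^{n^2}]}=\sup_{A,B\in B_{S_2^n}}\big\|(A\otimes\uno_{n^2})Y(B\otimes\uno_{n^2})\big\|_{S_1}.$$
For a rank one $\rho=|f\rangle\langle g|$ the image matrix equals $Y=|F\rangle\langle G|$, where $F=\sum_a e_a\otimes f\otimes e_a$ and $G=\sum_b e_b\otimes g\otimes e_b$; a direct computation gives $\|(A\otimes\uno_{n^2})F\|^2=\|A\|_{S_2}^2\|f\|^2$ and $\|(B^*\otimes\uno_{n^2})G\|^2=\|B\|_{S_2}^2\|g\|^2$, so that the supremum is at most $\|f\|\,\|g\|=\|\rho\|_{S_1^n}$. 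The matching-index structure of $|\psi_n\rangle\langle\psi_n|$ is exactly what converts the two Euclidean norms into the Hilbert--Schmidt norms of $A$ and $B$. A singular value decomposition of $\rho$ together with the triangle inequality then yields $\|\iota(\rho)\|\le\|\rho\|_{S_1^n}$ for arbitrary $\rho$.

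To obtain the completely bounded, vector valued statement I would run the same factorization through the factorization formula for the $S_1[E]$-norm, namely $\|X\|_{S_1[E]}=\inf\{\|A\|_{S_2}\|Z\|_{B(\ell_2)\otimes_{\min}E}\|B\|_{S_2}\}$ taken over all representations $X=(A\otimes\uno)Z(B\otimes\uno)$. Using Pisier's description of the cb-norm via $S_1^d$-amplifications one may replace the cb-norm by the ordinary norm of $id_{S_1^d}\otimes\iota\otimes id_E$, whose domain simplifies to $S_1^{dn}[E]$; starting from a near-optimal representation $X=(A\otimes\uno)Z(B\otimes\uno)$ of a given element, one builds an explicit representation of its image, absorbing the extra $M_n$-leg of $|\psi_n\rangle\langle\psi_n|$ into the $B(\ell_2)\otimes_{\min}E$ factor while leaving the outer $S_2$-factors $A,B$ untouched.

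I expect the main obstacle to be precisely this bookkeeping. The codomain mixes the projective norm on $S_1^n\hat{\otimes}S_1^n\hat{\otimes}E$ with the minimal norm of the $\otimes_{\min}M_n$ leg, and one must carry the amplification along while keeping the matching-index structure of $|\psi_n\rangle\langle\psi_n|$ aligned with $A$ and $B$, so that at no point is one forced to pass from a minimal to a projective norm in the unfavourable direction. Getting the factorization on the codomain side to reproduce the domain bound, rather than an a priori larger quantity, is the delicate part; the scalar computation above is the template that makes it work.
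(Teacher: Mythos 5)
Your scalar computation is correct as far as it goes: for $E=\C$ and no matrix amplification, writing $\iota(\rho)$ as a rank-one operator with the $M_n$-leg as the outer index and applying the supremum formula (\ref{norm p>q}) does give $\|\iota(\rho)\|_{M_n[S_1^{n^2}]}\leq\|\rho\|_{S_1^n}$, and the mechanism you isolate (the matching indices of $|\psi_n\rangle\langle\psi_n|$ converting Euclidean norms into Hilbert--Schmidt norms of $A$ and $B$) is the right one. But the lemma is a completely bounded, vector-valued statement --- the only form in which the paper ever uses it --- and for that part you offer a plan rather than a proof; you yourself flag the ``bookkeeping'' as the delicate, unresolved step. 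This is a genuine gap, and moreover the plan as stated would fail. If you keep the outer $S_2$-factors $A,B$ of a representation of $X\in S_1^{dn}[E]$ untouched and absorb the extra $M_n$-leg of $|\psi_n\rangle\langle\psi_n|$ into the $B(\ell_2)\otimes_{\min}E$ factor, then the \emph{first} leg of $|\psi_n\rangle\langle\psi_n|$, which sits inside the projective part $S_1^n(S_1^n)[E]$ of the codomain, receives only identity factors, and the resulting certificate is bounded by $\|A\otimes\uno_n\|_{S_2}\,\|Z\otimes|\psi_n\rangle\langle\psi_n|\,\|_{\min}\,\|B\otimes\uno_n\|_{S_2}=\sqrt{n}\,\|A\|_{S_2}\cdot n\|Z\|\cdot\sqrt{n}\,\|B\|_{S_2}$: it loses a factor $n^2$. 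This is not an artifact of a clumsy choice: $|\psi_n\rangle\langle\psi_n|$ has norm $1$ in $S_1^n\otimes_{\min}M_n$ but norm $n^2$ in $S_1^n\hat{\otimes}M_n$ (pair it with itself viewed in the unit ball of $M_n\otimes_{\min}S_1^n$), so \emph{any} argument that decouples the two legs --- one into the projective part, the other into the minimal part, estimated separately --- cannot beat $n^2$. Your scalar proof respects the pairing (there $A,B$ act on the $M_n$-leg itself); your proposed vector-valued extension does not, and repairing it is precisely the content of the lemma, not bookkeeping. (Also, the reduction to positive elements via Remark \ref{(p,1)} is not available here: for an arbitrary operator space $E$ there is no order on $S_1^n[E]$.)

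For contrast, the paper's proof never touches factorization certificates. It amplifies by $M_k$, observes that $(\iota\otimes id_E\otimes id_k)(x)=x\otimes|\psi_n\rangle\langle\psi_n|$, and uses that $|\psi_n\rangle\langle\psi_n|$ lies in the unit ball of $S_1^n\otimes_{\min}M_n$ (it is the identity of $CB(M_n,M_n)$): tensoring with a fixed norm-one element is completely contractive by (\ref{mmp min norm}), and the identification $S_1^n[E]\hat{\otimes}S_1^n=S_1^n(S_1^n)[E]$, combined with the standard projective/injective shuffle, lands $x\otimes|\psi_n\rangle\langle\psi_n|$ in the unit ball of the correct space. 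The cb-norm and the vector-valued structure come for free because $x$ is an arbitrary element of $S_1^n[E]\otimes_{\min}M_k$ from the start. If you want to salvage your route, the statement you actually need is that $v\mapsto\sum_{i,j}e_{i,j}\otimes(v\otimes e_{i,j})$ defines a complete isometry from $V$ into $M_n\big(V\hat{\otimes}S_1^n\big)$ for every operator space $V$; that is the coordinate-free content of your scalar computation, and it is exactly what the paper's soft argument delivers in one line.
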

\begin{proof}
We must show that
\begin{align*}
\Big\|\iota\otimes id_E\otimes id_k:S_1^n[E]\otimes_{min}M_k\rightarrow S_1^n(S_1^n)[E]\otimes_{min}M_n\otimes_{min}M_k\Big\|\leq 1
\end{align*}for every $k$. To this end, let us consider an element $x$ in the unit ball of $S_1^n[E]\otimes_{min}M_k$. Now, it follows from the definition of $\iota$ that $$(\iota\otimes id_E\otimes id_k)(x)=x\otimes |\psi_n\rangle\langle\psi_n|.$$On the other hand, since $|\psi_n\rangle\langle\psi_n|$ is in the unit ball of $S_1^n\otimes_{min}M_n$, according to (\ref{mmp min norm}), we have that $x\otimes |\psi_n\rangle\langle\psi_n|$ is in the unit ball of $S_1^n(S_1^n)[E]\otimes_{min} M_n\otimes_{min} M_k$. Here, we have used that $S_1^n[E]\hat{\otimes} S_1^n=S_1^n(S_1^n)[E]$. This concludes the proof.
\end{proof}
\begin{prop}\label{prop:p-embedding}
Let us define the linear map $J:M_n\rightarrow \ell_\infty^{n^2}\otimes M_n$ by
\begin{align*}
J(\rho)=\frac{1}{n}\sum_{k,l=1}^ne_{k,l}\otimes T_{k,l}\rho T_{k,l}^*.
\end{align*}Then, $J$ is completely positive verifying, for every operator space $E$,
\begin{align}\label{p,q-embedding}
\big\|J\otimes id_E:S_p^n[E] \rightarrow S_q^n(\ell_p^{n^2}[E])\big\|_{cb}\leq \frac{1}{n^{1-\frac{1}{p}-\frac{1}{q}}}
\end{align}for every $1\leq p \leq q\leq \infty$.
\end{prop}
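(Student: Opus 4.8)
The plan is to realize $J$ as a composition of two maps that are already under control — the ``tensor with a maximally entangled state'' map $\iota$ of Lemma \ref{lemma: assist-ent estimate} and the diagonal readout $P$ of Corollary \ref{orth-complementation} — and then to deduce the cb-estimate by interpolating three elementary corner estimates. First I would establish the algebraic factorization. Writing $\iota(\rho)=\rho\otimes|\psi_n\rangle\langle\psi_n|$ on three tensor legs $A,B,C$ (with $\rho$ on $A$ and $\psi_n$ on $BC$), and using the second identity of part d) of Proposition \ref{basic properties} together with $|\overline{\psi}_n\rangle\langle\overline{\psi}_n|=\tfrac1n|\psi_n\rangle\langle\psi_n|$, applying $P$ on the legs $AB$ in the basis $\eta_{k,l}$ annihilates all off-diagonal terms $k\neq k'$, $l\neq l'$ and leaves exactly
\[
(P_{AB}\otimes id_C)(\iota(\rho))=\frac1n\sum_{k,l=1}^n e_{k,l}\otimes T_{k,l}\rho T_{k,l}^*=J(\rho).
\]
Hence $J=(P\otimes id_{M_n})\circ\iota$. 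Complete positivity of $J$ is then immediate, since $\iota$ is completely positive (tensoring with the positive element $|\psi_n\rangle\langle\psi_n|$) and $P$ is completely positive by Corollary \ref{orth-complementation}; alternatively $J$ is visibly a sum of the completely positive maps $\rho\mapsto e_{k,l}\otimes T_{k,l}\rho T_{k,l}^*$.

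Next I would prove three corner estimates, in the coordinates $(1/p,1/q)$. At $(\infty,\infty)$: since the $e_{k,l}$ are orthogonal minimal projections in $\ell_\infty^{n^2}$ and each $T_{k,l}$ is unitary, reading off the $(k,l)$-coordinate is a complete contraction while conjugation by $T_{k,l}$ is a complete isometry of $M_n[E]$, so $\|J\otimes id_E:M_n[E]\to M_n(\ell_\infty^{n^2}[E])\|_{cb}\le \tfrac1n$. At $(1,\infty)$: apply the factorization carrying the leg $C$ as $M_n=S_\infty^n$ (minimal tensor product); by Lemma \ref{lemma: assist-ent estimate} the map $\iota\otimes id_E$ is a complete contraction into $S_1^n(S_1^n)[E]\otimes_{min}M_n$, and composing with $P\otimes id_E:S_1^{n^2}[E]\to \ell_1^{n^2}[E]$ (completely contractive by Corollary \ref{orth-complementation} at $p=1$), using \eqref{mmp min norm} and $\ell_1^{n^2}[E]\otimes_{min}M_n=M_n(\ell_1^{n^2}[E])$, gives cb-norm $\le 1$. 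At $(1,1)$: use the same factorization, but now carry $C$ as $S_1^n$ with the projective tensor product; since $|\psi_n\rangle\langle\psi_n|$ is a rank-one operator of trace-norm $n$ it has norm $n$ in $S_1^n\widehat\otimes S_1^n$, so $\iota\otimes id_E$ has cb-norm $\le n$ by \eqref{mmp projective norm}, and composing with $P\otimes id_E$ together with $\ell_1^{n^2}[E]\widehat\otimes S_1^n=S_1^n(\ell_1^{n^2}[E])$ yields cb-norm $\le n$.

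Finally I would interpolate, using Pisier's identity $S_{p_\theta}[E_\theta]=[S_{p_0}[E_0],S_{p_1}[E_1]]_\theta$ \cite[Theorem 1.1]{Pisierbook2} to identify the couples and \eqref{interpolation step I} to multiply the bounds. Interpolating the corners $(\infty,\infty)$ and $(1,\infty)$ along $q=\infty$ gives $\|J\otimes id_E:S_p^n[E]\to S_\infty^n(\ell_p^{n^2}[E])\|_{cb}\le n^{\frac1p-1}$ for all $p$; interpolating $(\infty,\infty)$ and $(1,1)$ along the diagonal $q=p$ gives $\|J\otimes id_E:S_p^n[E]\to S_p^n(\ell_p^{n^2}[E])\|_{cb}\le n^{\frac2p-1}$. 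For fixed $p$, interpolating these two endpoints in the outer index via $S_q^n(\ell_p^{n^2}[E])=[S_\infty^n(\ell_p^{n^2}[E]),S_p^n(\ell_p^{n^2}[E])]_\theta$ with $\theta=p/q$ produces the exponent $(\tfrac1p-1)(1-\theta)+(\tfrac2p-1)\theta=\tfrac1p+\tfrac1q-1$, that is the claimed bound $n^{\frac1p+\frac1q-1}=\frac{1}{n^{1-\frac1p-\frac1q}}$.

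The main obstacle is the bookkeeping in this two-parameter interpolation: at each step one must check that the endpoint vector-valued spaces form compatible interpolation couples and that their complex interpolation reproduces $S_q^n(\ell_p^{n^2}[E])$ with the correct value-space index. This is precisely where Pisier's interpolation identity for $S_p[E]$ is essential, and where the value space must be tracked with care, since it varies with $p$ in the two diagonal/edge steps but is held fixed when interpolating the outer index $q$.
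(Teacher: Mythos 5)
Your proof is correct, and its core coincides with the paper's: the factorization $J=(P\otimes id_{M_n})\circ\iota$ through Lemma \ref{lemma: assist-ent estimate} and Corollary \ref{orth-complementation} at the $p=1$ endpoint, a direct complete-positivity argument at $p=\infty$, and complex interpolation via Pisier's identity $S_{p_\theta}[E_\theta]=\big(S_{p_0}[E_0],S_{p_1}[E_1]\big)_\theta$. Where you genuinely diverge is the treatment of the index $q$. The paper dispatches it in one line: since $\big\|id_n\otimes id_X:M_n[X]\rightarrow S_q^n[X]\big\|_{cb}=n^{\frac{1}{q}}$, it suffices to prove the estimate with target $M_n(\ell_p^{n^2}[E])$, i.e. only your $q=\infty$ edge, and a single interpolation in $p$ between your corners $(1,\infty)$ and $(\infty,\infty)$ finishes the proof. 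You instead establish a third corner at $p=q=1$ (using that $|\psi_n\rangle\langle\psi_n|$ has norm $n$ in $S_1^n\hat{\otimes}S_1^n=S_1^{n^2}$, so $\|\iota\otimes id_E\|_{cb}\leq n$ by (\ref{mmp projective norm})) and then run a two-parameter interpolation: the edge $q=\infty$, the diagonal $q=p$, and finally the outer index with $\theta=p/q$. All three steps are legitimate: the identifications $S_p^n(\ell_p^{n^2}[E])=\big(M_n(\ell_\infty^{n^2}[E]),S_1^n(\ell_1^{n^2}[E])\big)_{\frac{1}{p}}$ (which the paper itself uses in Proposition \ref{prop:p,q-projection}) and $S_q^n[F]=\big(S_\infty^n[F],S_p^n[F]\big)_{p/q}$ for fixed $F=\ell_p^{n^2}[E]$ are instances of Pisier's theorem, compatibility of the couples is trivial in finite dimensions, and your exponent arithmetic $(\tfrac{1}{p}-1)(1-\theta)+(\tfrac{2}{p}-1)\theta=\tfrac{1}{p}+\tfrac{1}{q}-1$ checks out. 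The trade-off: your version exhibits the bound as an honest two-parameter interpolation between three completely positive corner maps, never invoking the norm of the formal identity $M_n\rightarrow S_q^n$; the paper's observation that this identity has cb-norm exactly $n^{\frac{1}{q}}$ makes your $(1,1)$ corner and two of your three interpolation steps unnecessary, yielding a shorter argument.
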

\begin{proof}
The fact that $J$ is linear and completely positive is very easy. On the other hand, since it is well known that
\begin{align*}
\big \|id_n\otimes id_X:M_n[X]\rightarrow S_q^n[X]\big \|_{cb}=n^\frac{1}{q}
\end{align*}for every operator space $X$, it suffices to show that
\begin{align}\label{p-embedding}
\big\|J\otimes id_E:S_p^n[E] \rightarrow M_n(\ell_p^{n^2}[E])\big\|_{cb}\leq \frac{1}{n^{1-\frac{1}{p}}}
\end{align}for every $1\leq p \leq \infty$.

In order to prove the previous estimate for the case $p=1$,
\begin{align}\label{equation embeding p=1}
\big\|J\otimes id_E:S_1^n [E]\rightarrow M_n(\ell_1^{n^2}[E])\big\|_{cb}\leq 1,
\end{align}we invoke part b) in Proposition \ref{basic properties} to understand the map $J$ as
\begin{align*}
J=(P\otimes id_n)\circ \iota:S_1^n\rightarrow S_1^n(S_1^n)\otimes_{min} M_n\rightarrow \ell_1^{n^2}\otimes_{min} M_n.
\end{align*}Here, the map $P$ was defined in Corollary \ref{orth-complementation} and the map $\iota$ was defined in Lemma \ref{lemma: assist-ent estimate}. Indeed, this identification can be checked by basic calculations
\begin{align*}
\big((P\otimes id_n)\circ \iota\big)(\rho)&=(P\otimes id_n)\big(\rho\otimes |\psi_n\rangle\langle\psi_n|\big)\\&=(P\otimes id_n)\Big(\frac{1}{n}\sum_{k,l=1}^n\sum_{k',l'=1}^n|\eta_{k,l}\rangle\langle\eta_{k',l'}|\otimes T_{k,l}\rho T_{k,l}^*\Big)\\&
=\frac{1}{n}\sum_{k,l=1}^n|\eta_{k,l}\rangle\langle\eta_{k,l}|\otimes T_{k,l}\rho T_{k,l}^*.
\end{align*}
Hence, the estimate (\ref{equation embeding p=1}) follows from Corollary \ref{orth-complementation} and Lemma \ref{lemma: assist-ent estimate}.

In order to show the case $p=\infty$, we just note that $J$ is a completely positive map between C$^*$-algebras. Then, it is well known (see for instance \cite[Corollary 2.9]{Paulsen}) that
\begin{align*}
\big\|J:M_n \rightarrow M_n(\ell_\infty^{n^2})\big\|_{cb}=\big\|J(\uno_n)\big\|_{M_n(\ell_\infty^{n^2})}=\frac{1}{n}.
\end{align*}Then, (\ref{mmp min norm}) immediately implies that
\begin{align}\label{equation embeding p=infty}
\big\|J\otimes id_E:M_n[E] \rightarrow M_n(\ell_\infty^{n^2}[E])\big\|_{cb}=\big\|J(\uno_n)\big\|_{M_n(\ell_\infty^{n^2})}=\frac{1}{n}.
\end{align}since $M_n[E]=M_n\otimes_{min} E$ and $M_n(\ell_\infty^{n^2}[E])=M_n(\ell_\infty^{n^2})\otimes_{min}E$.

Finally, the case $1< p<\infty$ follows from (\ref{equation embeding p=1}), (\ref{equation embeding p=infty}) and interpolation (\ref{interpolation step I}).
\begin{align*}
\big\|J\otimes id_E:S_p^n [E]\rightarrow M_n(\ell_p^{n^2}[E])\big\|_{cb}\leq \Big(\frac{1}{n}\Big)^{1-\frac{1}{p}},
\end{align*}where $S_p^n [E]=(M_n [E],S_1^n [E])_{\frac{1}{p}}$ and $M_n(\ell_p^{n^2}[E])=\big(M_n(\ell_\infty^{n^2}[E]),M_n(\ell_1^{n^2}[E])\big)_{\frac{1}{p}}$.
\end{proof}

\begin{prop}\label{prop:p,q-projection}
Let $W:M_n\otimes \ell_\infty^{n^2}\rightarrow M_n$ be the linear map defined by
\begin{align*}
W\Big(\sum_{k,l=1}^nA_{k,l}\otimes e_{k,l}\Big)=\frac{1}{n}\sum_{k,l=1}^n T_{k,l}^*A_{k,l}T_{k,l}
\end{align*}for every $(A_{k,l})_{k,l=1}^n\subset M_n$. Then, $W$ is completely positive and it verifies, for every operator space $E$,
\begin{align*}
\big\|W\otimes id_E: S_q^n(\ell_p^{n^2}[E])\rightarrow S_p^n[E]\big\|_{cb}\leq n^{1-\frac{1}{p}-\frac{1}{q}},
\end{align*}for every $1\leq p\leq q \leq\infty$.
\end{prop}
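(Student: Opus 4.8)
The plan is to follow the scheme of the proof of Proposition~\ref{prop:p-embedding}: establish complete positivity, reduce the two-parameter bound to the diagonal $p=q$, treat the two endpoints by hand, and interpolate. Complete positivity of $W$ is immediate, since $W=\sum_{k,l}W_{k,l}$ with $W_{k,l}(A)=\frac1n T_{k,l}^*AT_{k,l}$ and conjugation by a fixed operator is completely positive.

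The genuinely new point is the reduction in the variable $q$: unlike $J$, the map $W$ goes \emph{out of} the Schatten factor $S_q^n$, so one cannot simply dualize Proposition~\ref{prop:p-embedding}. Indeed, taking adjoints sends $S_q^n(\ell_p^{n^2}[E])$ to $S_{q'}^n(\ell_{p'}^{n^2}[E^*])$, thereby interchanging the regimes $q\ge p$ and $q\le p$; so dualizing the $J$-estimate would only cover the case $q\le p$, whereas here $q\ge p$. I therefore reduce to the diagonal first, where duality is consistent. For $p\le q$ and any operator space $X$ the operator-space H\"older inclusion gives $\|id:S_q^n[X]\to S_p^n[X]\|_{cb}\le n^{\frac1p-\frac1q}$ (the companion of the identity $\|id:M_n[X]\to S_q^n[X]\|_{cb}=n^{1/q}$ used for $J$). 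Taking $X=\ell_p^{n^2}[E]$ and factoring $W$ as $S_q^n(\ell_p^{n^2}[E])\xrightarrow{id}S_p^n(\ell_p^{n^2}[E])\xrightarrow{W}S_p^n[E]$ reduces the claim to the diagonal estimate $\|W\otimes id_E:S_p^n(\ell_p^{n^2}[E])\to S_p^n[E]\|_{cb}\le n^{1-\frac2p}$, since the prefactor $n^{1/p-1/q}$ then produces exactly $n^{1-1/p-1/q}$.

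For the diagonal I would compute the two endpoints exactly as for $J$. At $p=\infty$ the map $W:M_n(\ell_\infty^{n^2})\to M_n$ is completely positive between unital C$^*$-algebras, so by \cite[Corollary 2.9]{Paulsen} its cb-norm equals $\|W(\uno)\|$; since the $T_{k,l}$ are unitary, $W(\uno)=\frac1n\sum_{k,l}T_{k,l}^*T_{k,l}=n\,\uno_n$, giving cb-norm $n$, and tensoring with $id_E$ via (\ref{mmp min norm}) keeps it at $n$. At $p=1$ I would pass to the adjoint: $(W\otimes id_E)^*=W^*\otimes id_{E^*}$, where a direct computation with the tracial pairing (\ref{Scalar Pairing}) shows that $W^*:M_n\to M_n(\ell_\infty^{n^2})$ is a completely positive map of the same shape as $J$, so again $\|W^*\|_{cb}=\|W^*(\uno_n)\|=\frac1n$; then (\ref{mmp min norm}) together with $\|T^*\|_{cb}=\|T\|_{cb}$ yields the bound $n^{-1}=n^{1-2}$ at $p=1$.

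Finally, applying interpolation (\ref{interpolation step I}) with $\theta=\frac1p$ to the compatible couples $\big(M_n(\ell_\infty^{n^2}[E]),\,S_1^n(\ell_1^{n^2}[E])\big)$ and $\big(M_n[E],\,S_1^n[E]\big)$ --- whose $\theta$-interpolation spaces are $S_p^n(\ell_p^{n^2}[E])$ and $S_p^n[E]$ by Pisier's vector-valued interpolation formula --- gives $n^{1-\theta}(n^{-1})^{\theta}=n^{1-2/p}$, which is the diagonal estimate and hence the proposition. The main obstacle is precisely the sharp handling of the $q$-variable: because $W$ maps out of $S_q^n$ the naive dualization fails, and reducing to the diagonal before invoking the (there self-consistent) endpoint arguments is what circumvents it; the endpoints and the interpolation are then routine.
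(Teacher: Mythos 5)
Your proof is correct and follows essentially the same scheme as the paper's: complete positivity from the conjugation form of $W$, reduction to the diagonal case $p=q$ via the cb-norm $n^{\frac{1}{p}-\frac{1}{q}}$ of the identity $S_q^n[X]\to S_p^n[X]$, explicit endpoint estimates at $p=1$ and $p=\infty$, and interpolation. The only (harmless) divergence is at the $p=1$ endpoint, where the paper simply observes that $nW$ is completely positive and trace preserving, hence completely contractive from $S_1^n(\ell_1^{n^2})$ to $S_1^n$, whereas you dualize to a $J$-type unit computation and invoke $\|T^*\|_{cb}=\|T\|_{cb}$; both yield the same bound $1/n$.
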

\begin{proof}
The fact that $W$ is a linear map is obvious. Moreover, $W$ is defined as a sum of completely positive maps $A\mapsto T_{k,l}^*A_{k,l}T_{k,l}$, so it is completely positive. On the other hand, since it is well known that
\begin{align*}
\big \|id_n\otimes id_X:S_q^n[X]\rightarrow S_p^n[X]\big \|_{cb}=n^{\frac{1}{p}-\frac{1}{q}}
\end{align*}for every operator space $X$, it suffices to show that
\begin{align}\label{prop:p-projection}
\big\|W\otimes id_E:S_p^n (\ell_p^{n^2}[E])\rightarrow S_p^n[E]\big\|_{cb}\leq n^{1-\frac{2}{p}}.
\end{align}for every $1\leq p \leq \infty$.

Let us first consider the case $p=1$. The fact that $nW$ is completely positive and trace preserving immediately implies that $nW$ is completely contractive from $S_1^n(\ell_1^{n^2})$ to $S_1^n$. Thus, we have
\begin{align}\label{prop:p-projection p=1}
\big\|W\otimes id_E: S_1^n(\ell_1^{n^2}[E])\rightarrow S_1^n[E]\big\|_{cb}\leq \frac{1}{n},
\end{align}since $S_1^n(\ell_1^{n^2}[E])=S_1^n(\ell_1^{n^2})\hat{\otimes} E$ and $S_1^n[E]=S_1^n\hat{\otimes} E$ (\ref{mmp min norm}).
If we consider $p=\infty$, we have a completely positive map between the $C^*$-algebras $\ell_\infty^{n^2}(M_n)$ and $M_n$. As we have said previously, the completely bounded norm is then attained in the unit. Again, we easily deduce from here that
\begin{align}\label{prop:p-projection p=infty}
\big\|W\otimes id_E:M_n(\ell_\infty^{n^2}[E])\rightarrow M_n[E]\big\|_{cb}=\Big\|\frac{1}{n}\sum_{k,l=1}^n\uno_n\Big\|_{M_n}=n.
\end{align}Equations (\ref{prop:p-projection p=1}) and (\ref{prop:p-projection p=infty}) allow us to obtain the estimate in (\ref{prop:p-projection}) for a general case $1\leq p \leq \infty$ by interpolation (\ref{interpolation step I}). Indeed, we have
\begin{align*}
\big\|W\otimes id_E:S_p^n (\ell_p^{n^2}[E])\rightarrow S_p^n[E]\big\|_{cb}\leq \Big(\frac{1}{n}\Big)^\frac{1}{p}n^{1-\frac{1}{p}}=n^{1-\frac{2}{p}},
\end{align*}where we have used that $S_p^n (\ell_p^{n^2}[E])=\big(M_n(\ell_\infty^{n^2}[E]), S_1^n(\ell_1^{n^2}[E])\big)_{\frac{1}{p}}$.
\end{proof}
Instead of proving Theorem \ref{Theorem I: teleportation-general} directly we will first show how to obtain Corollary \ref{Cor teleport 1 space}. Then, we will explain how to adapt such a proof to obtain Theorem \ref{Theorem I: teleportation-general}.
\begin{proof}[Proof of Corollary \ref{Cor teleport 1 space}]
It suffices to show the case $1\leq p\leq q\leq \infty$, since the other case can be obtained by duality.

Let us define the linear maps
\begin{align*}
J_{p,q}:=n^{1-\frac{1}{p}-\frac{1}{q}}J:M_n \rightarrow \ell_\infty^{n^2}\otimes M_n,
\end{align*}where $J$ was defined in Proposition \ref{prop:p-embedding}, and
\begin{align*}
W_{p,q}:=n^{\frac{1}{p}+\frac{1}{q}-1}W:\ell_\infty^{n^2}\otimes M_n\rightarrow M_n,
\end{align*}where $W$ was defined in Proposition \ref{prop:p-projection}. According to the previous propositions both maps are completely positive and they verify the estimates
\begin{align*}
\big\|J_{p,q}\otimes id_E:S_p^n [E]\rightarrow S_q^n(\ell_p^{n^2}[E])\big\|_{cb}\leq 1, \text{    } \text{  and   }  \text{    } \big\|W_{p,q}\otimes id_E:S_q^n(\ell_p^{n^2}[E]) \rightarrow S_p^n[E]\big\|_{cb}\leq 1
\end{align*}for every operator space $E$.
Therefore, it suffices to show the algebraic identification $W_{p,q} \circ J_{p,q}=\uno_n$.
This is very easy just noting that for every $\rho\in M_n$ we have that
\begin{align*}
W_{p,q} \big(J_{p,q}(\rho)\big)= W \big(J(\rho)\big)=W \Big(\frac{1}{n}\sum_{k,l=1}^ne_{k,l}\otimes T_{k,l}\rho T_{k,l}^*\Big)=\frac{1}{n^2}\sum_{k,l=1}^n\rho =\rho.
\end{align*}
\end{proof}
Quantum teleportation is a communication protocol between two people, Alice and Bob, where say Alice can transmit a qubit (basic unit in quantum information theory) to Bob, by just sending two classical bits of information if they are allowed to share a maximally entangled state during the protocol. From a mathematical point of view, this means that there exist a channel (completely positive and trace preserving map) $\mathcal E:S_1^2\otimes S_1^2\rightarrow \ell_1^4$ (Alice's encoder from quantum to classical information) and another channel  $\mathcal D:\ell_1^4\otimes S_1^2\rightarrow S_1^2$ (Bob's decoder from classical to quantum information) so that the following diagram commutes:
$$\xymatrix@R=0.75cm@C=1cm {{\ell_1^4\otimes S_1^2}\ar[r]^{id} &
{\ell_1^4\otimes S_1^2}\ar[dd]^{\mathcal D} \\{(S_1^2\otimes S_1^2)\otimes S_1^2}\ar[u]^{\mathcal E\otimes id_{S_1^2}}
 & {}\\{S_1^2}\ar[u]^{i}\ar[r]^{id} & {S_1^2}},$$where here the map $i:S_1^2\rightarrow S_1^2\otimes S_1^2$ is defined by $i(\rho)=\rho\otimes |\overline{\psi}_2\rangle\langle\overline{\psi}_2 |$. A careful study of the channels $\mathcal E$, $\mathcal D$ in the teleportation protocol (see for instance \cite[Section 1.3.7]{NC}) should help the reader to identify the maps used in the proof of Corollary \ref{Cor teleport 1 space} for the particular case $n=2$.

The proof of Theorem \ref{Theorem I: teleportation-general} is a generalization of the previous one. However, in this case we need to be more careful since we have to use the same state $|\psi_d\rangle\langle\psi_d|\in M_d\otimes M_d$ to define different maps. Let us start by noting that the element $\psi_d$ can be seen as a tensor product element. Indeed,
\begin{align*}
\psi_d= \sum_{i=1}^n\sum_{j=1}^{n_1}(e_i\otimes e_j)\otimes (e_i\otimes e_j)= \sum_{i=1}^n(e_i\otimes e_i)\otimes \sum_{j=1}^{n_1}(e_j\otimes e_j).
\end{align*}Therefore,
\begin{align}\label{decom max ent state}
|\psi_d\rangle\langle\psi_d|&=\sum_{i,i'=1}^n\sum_{j,j'=1}^{n_1}|(e_i\otimes e_j)\otimes (e_i\otimes e_j)\rangle\langle (e_{i'}\otimes e_{j'})\otimes (e_{i'}\otimes e_{j'})|
\\\nonumber &=\sum_{i,i'=1}^n|e_i\rangle\langle e_{i'}|\otimes |e_i\rangle\langle e_{i'}|\otimes \sum_{j,j'=1}^{n_1}|e_j\rangle\langle e_{j'}|\otimes |e_j\rangle\langle e_{j'}|\\\nonumber &=|\psi_n\rangle\langle\psi_n|\otimes |\psi_{n_1}\rangle\langle\psi_{n_1}|.
\end{align}Similarly, we have that $|\psi_d\rangle\langle\psi_d|=|\psi_m\rangle\langle\psi_m|\otimes |\psi_{m_1}\rangle\langle\psi_{m_1}|$.

We will also need a ``more sophisticated'' interpolation result here, which allows us to interpolate not just the spaces, but also the operators.  We will use the following result, which can be found in \cite{CwJa}.
\begin{theorem}\label{Thm general interpolation}
Let $\bar{S}$ denote the close strip $\{z: 0\leq Re(z)\leq 1\}$ in the complex plane and $A(\bar{S})$ the algebra of bounded  continuous functions on $\bar{S}$ that are analytic on the open strip $S$. Let $(E_0,E_1)$ and $(F_0,F_1)$ be two compatible couples of Banach spaces and $\{T_z\}_{z\in \bar{S}}$ be a family of operators on $E_0\cap E_1$ into $F_0+F_1$ such that for every $a\in E_0\cap E_1$ and $b^*\in (F_0+F_1)^*$, $\langle b^*,T_z(a)\rangle \in A(\bar{S})$, there exist constants $M_0$, $M_1$ so that $\sup_{j+it}\|T_z:E_j\rightarrow F_j\|\leq M_j$ for $j=0,1$, and for every $a\in E_0\cap E_1$ we have that $\{T_{it}(a)\}_t$ lies in a separable subspace of $F_0$. Then, $$\|T_{\theta}:(E_0,E_1)_\theta\rightarrow (F_0,F_1)_\theta\|\leq M_0^{1-\theta}M_1^\theta.$$
\end{theorem}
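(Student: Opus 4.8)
The plan is to treat this as a form of Stein's interpolation theorem and to prove it by a \emph{normalized} three-lines argument carried out inside the Calder\'on description of the complex method. Recall that $(E_0,E_1)_\theta$ is the space of values $f(\theta)$, where $f$ ranges over the Calder\'on space $\mathcal{F}(E_0,E_1)$ of bounded continuous functions $f:\bar S\to E_0+E_1$ that are analytic on $S$ and satisfy that $t\mapsto f(j+it)$ is bounded and continuous into $E_j$ for $j=0,1$; the interpolation norm of $a=f(\theta)$ is the infimum of $\|f\|_{\mathcal F}=\max_{j=0,1}\sup_t\|f(j+it)\|_{E_j}$ over all such $f$. Since $E_0\cap E_1$ is dense in $(E_0,E_1)_\theta$ and $T_\theta$ is only defined there, it suffices to bound $\|T_\theta a\|_{(F_0,F_1)_\theta}$ for $a\in E_0\cap E_1$.

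Fix such an $a$ and, given $\varepsilon>0$, choose $f\in\mathcal F(E_0,E_1)$ with $f(\theta)=a$ and $\|f\|_{\mathcal F}\le\|a\|_{(E_0,E_1)_\theta}+\varepsilon$. Because $T_z$ acts only on $E_0\cap E_1$, I would first invoke Calder\'on's density lemma to replace $f$ by an elementary function $f(z)=\sum_k\varphi_k(z)a_k$ with $a_k\in E_0\cap E_1$ and scalar multipliers $\varphi_k\in A(\bar S)$; this costs an arbitrarily small amount in $\mathcal F$-norm and has the crucial feature that $f(z)\in E_0\cap E_1$ for every $z$, so that
\[
g(z)=\mu(z)\,T_zf(z),\qquad \mu(z)=M_0^{\,z-1}M_1^{\,-z},
\]
is well defined pointwise and satisfies $g(\theta)=M_0^{\theta-1}M_1^{-\theta}\,T_\theta a$. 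The entire, zero-free factor $\mu$ is chosen precisely so that $|\mu(it)|=M_0^{-1}$ and $|\mu(1+it)|=M_1^{-1}$; combined with the hypotheses $\sup_t\|T_{it}:E_0\to F_0\|\le M_0$ and $\sup_t\|T_{1+it}:E_1\to F_1\|\le M_1$, this yields the balanced boundary bounds $\|g(it)\|_{F_0}\le\|f\|_{\mathcal F}$ and $\|g(1+it)\|_{F_1}\le\|f\|_{\mathcal F}$.

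The crux, and the step I expect to be the main obstacle, is to verify that $g$ is genuinely admissible for the complex method, i.e.\ $g\in\mathcal F(F_0,F_1)$, since then by the very definition of the interpolation norm $\|g(\theta)\|_{(F_0,F_1)_\theta}\le\|g\|_{\mathcal F}\le\|f\|_{\mathcal F}$. Writing $g(z)=\sum_k\varphi_k(z)\mu(z)\,T_za_k$, the hypothesis $\langle b^*,T_za_k\rangle\in A(\bar S)$ for every $b^*\in(F_0+F_1)^*$ makes each $T_za_k$ \emph{weakly} analytic, while the uniform bounds $\|T_{j+it}\|\le M_j$ together with the scalar three-lines (Hadamard) lemma applied to $z\mapsto\langle b^*,g(z)\rangle$ give local boundedness of $g$ in the $F_0+F_1$ norm throughout $\bar S$. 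The separability hypothesis — that $\{T_{it}(a_k)\}_t$ lies in a separable subspace of $F_0$ — is exactly what is needed to upgrade weak analyticity to strong (norm) analyticity, via the Pettis measurability theorem and a vector-valued Phragm\'en--Lindel\"of argument, so that $g$ is norm-analytic on $S$, norm-continuous and bounded on $\bar S$, with the correct boundary regularity into $F_0$ and $F_1$.

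Granting this, since $1/|\mu(\theta)|=M_0^{1-\theta}M_1^{\theta}$ we obtain
\[
\|T_\theta a\|_{(F_0,F_1)_\theta}=M_0^{1-\theta}M_1^{\theta}\,\|g(\theta)\|_{(F_0,F_1)_\theta}\le M_0^{1-\theta}M_1^{\theta}\,\|f\|_{\mathcal F}\le M_0^{1-\theta}M_1^{\theta}\big(\|a\|_{(E_0,E_1)_\theta}+\varepsilon\big),
\]
and letting $\varepsilon\to0$, then using density of $E_0\cap E_1$, finishes the proof. The degenerate cases $M_0=0$ or $M_1=0$ are dealt with by replacing each $M_j$ by $M_j+\delta$ and letting $\delta\to0$.
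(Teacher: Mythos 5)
First, a point of reference: the paper itself contains no proof of this theorem --- it is quoted verbatim from Cwikel and Janson (``which can be found in \cite{CwJa}'') --- so your attempt must be measured against the argument in that reference, and the comparison is instructive: what you treat as a verification is essentially the content of that paper. Your skeleton is the right one (normalize with $\mu(z)=M_0^{z-1}M_1^{-z}$, reduce to $E_0\cap E_1$-valued elementary functions so that $T_zf(z)$ makes sense pointwise, then feed $g(z)=\mu(z)T_zf(z)$ into the definition of the complex method), and your boundary estimates and final bookkeeping are correct. A first, repairable, gap: Calder\'on's density lemma produces an elementary $h$ with $\|f-h\|_{\mathcal F}$ small, but \emph{not} with $h(\theta)=a$; and since the continuity of $T_\theta$ in the interpolation norms is exactly what is being proved, you cannot pass to the limit afterwards. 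The standard repair (Stafney's lemma) writes $f=\psi a+u$ with $\psi$ scalar, $\psi(\theta)=1$, so that $u$ vanishes at $\theta$; one divides $u$ by a Blaschke factor of the strip (unimodular on the boundary, vanishing at $\theta$), approximates the quotient by an elementary function, and multiplies back. This uses $a\in E_0\cap E_1$ and yields an elementary competitor with the exact value $a$ at $\theta$; it is not what you invoked.

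The decisive gap is the step you yourself call the crux. Membership in Calder\'on's class $\mathcal F(F_0,F_1)$ requires the boundary functions $t\mapsto g(j+it)$ to be \emph{norm continuous} as $F_j$-valued functions (with decay at infinity). The hypotheses supply only the pointwise bounds $\|g(j+it)\|_{F_j}\le\|f\|_{\mathcal F}$ together with weak continuity on $\bar S$; nothing upgrades this to $F_j$-norm continuity, and in general $g\notin\mathcal F(F_0,F_1)$, so the inequality $\|g(\theta)\|_{(F_0,F_1)_\theta}\le\|g\|_{\mathcal F}$ cannot be invoked ``by the very definition''. Note also that your appeal to Pettis is misplaced as far as analyticity goes: weak analyticity already implies norm analyticity in the open strip by Dunford's theorem, with no separability at all; the separability hypothesis concerns the boundary, not the interior. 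What is actually needed is a lemma asserting that the complex interpolation norm of $g(\theta)$ is still dominated by $\max_j\sup_t\|g(j+it)\|_{F_j}$ for functions that are merely bounded and analytic in $S$, weakly continuous on $\bar S$, with $F_j$-valued, strongly measurable boundary values --- for instance via vertical mollification $g_\lambda(z)=\int g(z+is)\,\rho_\lambda(s)\,ds$, which requires the boundary functions to be Bochner integrable; it is there, through Pettis' measurability theorem, that the assumption that $\{T_{it}a\}_t$ lies in a separable subspace of $F_0$ enters (and one must also handle the asymmetry that separability is assumed on one boundary line only). Proving this relaxed description of the complex method is the actual mathematical content of \cite{CwJa}; as written, your argument asserts the conclusion of that lemma rather than proving it, so the proof is incomplete at precisely the point where the theorem is nontrivial.
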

To simplify notation, we will show the proof of the main theorem for the case of two spaces and in the scalar case ($E=\C$), $S_p^n\oplus_p S_p^m$. The reader will see that exactly the same proof applies in the general case.
\begin{proof}[Proof of Theorem \ref{Theorem I: teleportation-general}]
Again, it suffices to show the result for the case $1\leq p\leq q\leq \infty$, since the general case can be then obtained by duality.
In order to prove the first part of the theorem, let $d$ be the least common multiplier of $m$ and $n$ so that $d=nn_1=mm_1$ for certain natural numbers $n_1$ and $m_1$. Let us denote by $P^k$, $J^k$ and $W^k$ the linear maps introduced in Corollary \ref{orth-complementation}, Proposition \ref{prop:p-embedding} and Proposition \ref{prop:p,q-projection} respectively, when they are defined in dimension $k$ equal $n$ or $m$.

Motivated by (\ref{decom max ent state}), we consider the projection $$\tilde{P}^n:M_n(M_d)\rightarrow \ell_\infty^{n^2}$$ defined as
\begin{align*}
\tilde{P}^n(\rho)=P^n\big((id_n\otimes tr_{n_1})(\rho)\big) \text{     } \text{     } \text{  for every    } \text{     } \text{     }  \rho\in M_n(M_d)=M_n(M_n\otimes M_{n_1}).
\end{align*}We define $\tilde{P}^m:M_m(M_d)\rightarrow \ell_\infty^{m^2}$ analogously. Moreover, for every $1\leq p\leq q\leq \infty$ we consider the linear map
\begin{align*}
\tilde{J}_{p,q}:M_n\oplus M_m\rightarrow \ell_\infty^{n^2}(M_d)\oplus \ell_\infty^{m^2}(M_d)=(\ell_\infty^n\oplus \ell_\infty^m)(M_d).
\end{align*}defined by
\begin{align*}
\tilde{J}_{p,q}(\rho_1\oplus \rho_2)= d^{-\frac{1}{q}}\Big[(n^{\frac{1}{p'}}\tilde{P}^n\otimes id_d)\big(\rho_1\otimes |\psi_d\rangle\langle\psi_d|\big)\oplus (m^{\frac{1}{p'}}\tilde{P}^m\otimes id_d)\big(\rho_2\otimes |\psi_d\rangle\langle\psi_d|\big)\Big].
\end{align*}According to (\ref{decom max ent state}) we have
\begin{align*}
\tilde{J}_{p,q}(\rho_1\oplus \rho_2)&=d^{-\frac{1}{q}}\Big[\frac{1}{n^\frac{1}{p}}\sum_{j=1}^{n_1}\sum_{k,l=1}^ne_{k,l}\otimes T_{k,l}\rho_1 T_{k,l}^*\otimes |e_j\rangle\langle e_{j}|\\&\oplus \frac{1}{m^\frac{1}{p}}\sum_{i=1}^{m_1}\sum_{k',l'=1}^me_{k',l'}\otimes T_{k',l'}\rho_2 T_{k',l'}^*\otimes |e_i\rangle\langle e_i|\Big].
\end{align*}
$\tilde{J}_{p,q}$  is a direct sum of two completely positive maps. Thus, it is completely positive. We claim that
\begin{align}\label{p,q-embedding direct sum}
\big\|\tilde{J}_{p,q}:S_p^n\oplus_p S_p^m\rightarrow S_q^d(\ell_p^{n^2}\oplus_p \ell_p^{m^2})\big\|_{cb}\leq 1.
\end{align}
As we explained before, it suffices to show that
\begin{align}\label{p-embedding direct sum}
\big\|d^\frac{1}{q}\tilde{J}_{p,q}:S_p^n\oplus_p S_p^m\rightarrow M_d\otimes_{min}(\ell_p^{n^2}\oplus_p \ell_p^{m^2})\big\|_{cb}\leq 1.
\end{align}Since $d^\frac{1}{q}\tilde{J}_{p,q}$ does not depend on $q$, let us just denote $\tilde{J}_{p}$ this map.
Indeed, for the case $p=1$ we invoke the same argument as in the proof of Proposition \ref{prop:p-embedding} to state that the map
\begin{align*}
\tilde{\iota}:S_1^n\oplus_1 S_1^m\rightarrow (S_1^n\oplus_1 S_1^m)(S_1^d)\otimes_{min} M_d=(S_1^{nd}\oplus_1 S_1^{md})\otimes_{min} M_d,
\end{align*}defined by
\begin{align*}
\tilde{\iota}(\rho_1\oplus \rho_2)=(\rho_1\oplus \rho_2)\otimes |\psi_d\rangle\langle\psi_d|,
\end{align*}
is completely contractive. On the other hand, since $\tilde{P}^n:S_1^{nd}\rightarrow \ell_1^{n^2}$ and $\tilde{P}^m:S_1^{md}\rightarrow \ell_1^{m^2}$ are completely contractive maps, we conclude (see for instance \cite[Chapter 2]{Pisierbook2}) that
\begin{align*}
(\tilde{P}^n\oplus \tilde{P}^m)\otimes id_d :(S_1^{nd}\oplus_1 S_1^{md})\otimes_{min} M_d\rightarrow (\ell_1^{n^2}\oplus_1 \ell_1^{m^2})\otimes_{min} M_d
\end{align*}is a complete contraction. Since $\tilde{J}_1=\big((\tilde{P}^n\oplus \tilde{P}^m)\otimes id_d\big)\circ \tilde{\iota}$, we obtain that
\begin{align}\label{General proof int p=1}
\big\|\tilde{J}_1:S_1^n\oplus_1 S_1^m\rightarrow M_d\otimes_{min}(\ell_1^{n^2}\oplus_1\ell_1^{m^2})\big\|_{cb}\leq 1.
\end{align}
For the case $p=\infty$ we can proceed as in some previous proofs (just by evaluating the norm of $\tilde{J}_\infty(\uno_n\oplus \uno_m)$) or we can realized that, since  $\tilde{J}_\infty:M_n\oplus_\infty M_m\rightarrow M_d\otimes_{min}(\ell_\infty^{n^2}\oplus_\infty \ell_\infty^{m^2})$ is defined as a direct sum of two maps, it suffices to see that each of these maps $\tilde{J}_\infty^1:M_n\rightarrow M_d(\ell_\infty^{n^2})$ and $\tilde{J}_\infty^2:M_m\rightarrow M_d(\ell_\infty^{m^2})$ are completely contractive respectively. This is trivial since both of them are completely positive and unital\footnote{This second proof, although more stilted, will make the interpolation argument below easier.}. Therefore,
\begin{align}\label{General proof int p infty}
\big\|\tilde{J}_\infty:M_n\oplus_\infty M_m\rightarrow M_d\otimes_{min}(\ell_\infty^{n^2}\oplus_\infty \ell_\infty^{m^2})\big\|_{cb}=1.
\end{align}
The general case (\ref{p-embedding direct sum}) for $1< p<\infty$ follows now by interpolation. However, in this case we need to use a more general result, since we must also interpolate the operators $\tilde{J}_p$. To this end, we can apply Theorem \ref{Thm general interpolation} with $$\tilde{J}_z:=(n^{1-z}\tilde{P}^n\otimes \uno_d)\big(\rho_1\otimes |\psi_d\rangle\langle\psi_d|\big)\oplus (m^{1-z}\tilde{P}^m\otimes \uno_d)\big(\rho_2\otimes |\psi_d\rangle\langle\psi_d|\big).$$In fact, since the theorem is stated for the norm of operators, in order to obtain our estimate for the completely bounded norm, we must consider the family of operators $T_z=id_{M_k}\otimes \tilde{J}_z$ for an arbitrary but fixed $k$. Then, we must understand (\ref{General proof int p=1}) and (\ref{General proof int p infty}) as estimates about the norm of $id_{M_k}\otimes \tilde{J}_1$ and $id_{M_k}\otimes \tilde{J}_\infty$ respectively. On the one hand, according to our explanation in Section \ref{!
 Sec: Basi
 c notions}, we can indeed obtain the spaces $M_k(S_p^n\oplus_p S_p^m)$ and $M_k\big(M_d\otimes_{min}(\ell_p^{n^2}\oplus_p \ell_p^{m^2})\big)$ by interpolating the spaces involved in the estimates (\ref{General proof int p=1}) and (\ref{General proof int p infty}) when they are tensored with $M_k$. On the other hand, since all the spaces are finite dimensional  and the dependence of $T_z$ with respect to $z$ is so simple, all regularity conditions of Theorem \ref{Thm general interpolation} are trivially verified and we just need to see that  $\sup_t\|T_{j+it}\|\leq 1$ for $j=0,1$. Let us recall that  $\tilde{J}_z$ is a direct sum of two maps $\tilde{J}_z^1$ and $\tilde{J}_z^2$. Then, we see that $T_{it}=id_{M_k}\otimes (n^{-it}\tilde{J}_\infty^1\oplus m^{-it}\tilde{J}_\infty^2)$ and similarly $T_{1+it}=id_{M_k}\otimes (n^{-it}\tilde{J}_1^1\oplus m^{-it}\tilde{J}_1^2)$. However, it is very easy to see that the arguments in (\ref{General proof int p=1}) and (\ref{General proof!
  int p in
 fty}) are not affected if we multiply $\tilde{J}_z^1$ and $\tilde{J}_z^2$ by a number of modulus one. Therefore, the same estimates hold in this new case. Hence, we obtain (\ref{p-embedding direct sum}).

Let us consider now the linear map $\Gamma_{p,q}:S_q^d(\ell_\infty^{n^2}\oplus\ell_\infty^{m^2})\rightarrow M_n \oplus M_m$ defined by
\begin{align*}
\Gamma_{p,q}=\frac{1}{d^{1-\frac{1}{q}}}\Big(\frac{1}{n^\frac{1}{p'}}\tilde{W}^n\oplus \frac{1}{m^\frac{1}{p'}}\tilde{W}^m\Big),
\end{align*}where $\tilde{W}^n:M_d\otimes \ell_\infty^{n^2}\rightarrow M_n$ is defined by
\begin{align*}
\tilde{W}^n\Big(\sum_{k,l=1}^nA_{k,l}\otimes e_{k,l}\Big)=\sum_{k,l=1}^n T_{k,l}^*\big((id_n\otimes tr_{n_1})(A_{k,l})\big)T_{k,l},
\end{align*}and $\tilde{W}^m:M_d\otimes \ell_\infty^{m^2}\rightarrow M_m$ is defined analogously. It is clear that $\Gamma_{p,q}$ is completely positive.
We claim that
\begin{align}\label{claim W direct sum}
\big\|\tilde{W}^n:\ell_p^{n^2}(S_p^d)\rightarrow S_p^n\big\|_{cb}\leq (dn)^\frac{1}{p'} \text{    }  \text{    and     }\text{ }\big\|\tilde{W}^m:\ell_p^{m^2}(S_p^d)\rightarrow S_p^m\big \|_{cb}\leq (dm)^\frac{1}{p'}
\end{align}for ever $1\leq p\leq \infty$.
We show the estimate for $\tilde{W}^n$ since the second one is completely analogous.
Let us first consider $p=1$. Then, $\big \|\tilde{W}^n:\ell_1^{n^2}(S_1^d)\rightarrow S_1^n\big \|_{cb}\leq 1$ follows from the fact that $\tilde{W}^n$ is completely positive and trace preserving. On the other hand, the case $p=\infty$ follows from the estimate
\begin{align*}
\big \|\tilde{W}^n:\ell_\infty^{n^2}(M_d)\rightarrow M_n\big \|_{cb}=\Big \|\tilde{W}^n\Big(\sum_{k,l=1}^ne_{k,l}\otimes \uno_d\Big)\Big \|_{M_n}=dn.
\end{align*}
The general estimate (\ref{claim W direct sum}) can be obtained now by interpolation.

With (\ref{claim W direct sum}) at hand, one can show that
\begin{align}
\big\|\Gamma_{p,q}:S_q^d(\ell_p^{n^2}\oplus_p\ell_p^{m^2})\rightarrow S_p^n \oplus_p S_p^m\big\|_{cb}\leq 1.
\end{align}To this end, we use once more that
\begin{align*}
\big\|\Gamma_{p,q}:S_q^d(\ell_p^{n^2}\oplus_p\ell_p^{m^2})\rightarrow S_p^n \oplus_p S_p^m\big\|_{cb}\leq d^{\frac{1}{p}-\frac{1}{q}}\big\|\Gamma_{p,q}:S_p^d(\ell_p^{n^2}\oplus_p\ell_p^{m^2})\rightarrow S_p^n \oplus_p S_p^m\big\|_{cb} .
\end{align*}Therefore, we need to show that
\begin{align*}
\big\|d^{\frac{1}{p}-\frac{1}{q}}\Gamma_{p,q}:S_p^d(\ell_p^{n^2}\oplus_p\ell_p^{m^2})\rightarrow S_p^n \oplus_p S_p^m\big\|_{cb} \leq 1.
\end{align*}Since $d^{\frac{1}{p}-\frac{1}{q}}\Gamma_{p,q}$ does not depend on $q$, let us denote it by $\Gamma_p$.  Now, noting that
\begin{align*}
\Gamma_p=\frac{1}{(nd)^\frac{1}{p'}}\tilde{W}^n\oplus \frac{1}{(dm)^\frac{1}{p'}}\tilde{W}^m,
\end{align*}the previous estimate is a direct consequence of (\ref{claim W direct sum}).

Therefore, we conclude our proof if we show that
\begin{align*}
\Gamma_{p,q} \circ \tilde{J}_{p,q}=\id_{S_p^n\oplus_p S_p^m}.
\end{align*}Indeed, given $\rho_1\oplus \rho_2\in S_p^n\oplus_p S_p^m$ we have that
\begin{align*}
\Gamma_p\big(\tilde{J}_p(\rho_1\oplus \rho_2)\big)&=d^{-\frac{1}{q}}\Gamma_p\Big(\frac{1}{n^\frac{1}{p}}\sum_{j=1}^{n_1}\sum_{k,l=1}^ne_{k,l}\otimes T_{k,l}\rho_1 T_{k,l}^*\otimes |e_j\rangle\langle e_{j}|\\&\text{    }\text{    }\text{    }\oplus \frac{1}{m^\frac{1}{p}}\sum_{i=1}^{m_1}\sum_{k',l'=1}^me_{k',l'}\otimes T_{k',l'}\rho_2 T_{k',l'}^*\otimes |e_i\rangle\langle e_i|\Big)\\&=
\frac{1}{n^\frac{1}{p}}\frac{1}{dn^\frac{1}{p'}}n_1\sum_{k,l=1}^n\rho_1 \oplus \frac{1}{m^\frac{1}{p}}\frac{1}{dm^\frac{1}{p'}}m_1\sum_{k',l'=1}^m\rho_2 =\rho_1\oplus \rho_2.
\end{align*}
\end{proof}
%
%
%
%
%
We finish this section by proving Theorem \ref{Theorem superdense embedding}. The ideas here are motivated by another communication protocol called super dense coding, in which Alice can send 2 bits of classical communication to Bob by just send 1 qubit of communication if they are allowed to share a maximally entangled state during the protocol.
\begin{prop}\label{prop: p embedding superdense}
Let us define the linear map $H:\ell_\infty^{n^2}\rightarrow M_n\otimes M_n$ by
\begin{align*}
H(e_{k,l})=n|\eta_{k,l}\rangle\langle\eta_{k,l}|
\end{align*}for every $k,l$. Then, $H$ is completely positive and it verifies, for every operator space $E$,
\begin{align*}
\|H\otimes id_E:\ell_p^{n^2} [E]\rightarrow S_q^n(S_p^n[E])\|_{cb}\leq n^{1+\frac{1}{q}-\frac{1}{p}}
\end{align*}for every $1\leq p\leq q \leq\infty$.
\end{prop}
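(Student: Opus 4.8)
The plan is to follow the same four--step scheme as in Proposition \ref{prop:p-embedding}: first observe that $H$ is completely positive, then reduce the target index $q$ to $q=\infty$, then settle the two endpoints $p=1$ and $p=\infty$, and finally interpolate. Complete positivity is immediate, since $H$ is a diagonal map out of the commutative C$^*$-algebra $\ell_\infty^{n^2}$ sending each minimal projection $e_{k,l}$ to the positive operator $n|\eta_{k,l}\rangle\langle\eta_{k,l}|$. For the reduction I would use, exactly as in Proposition \ref{prop:p-embedding}, the identity $\|id_n\otimes id_X:M_n[X]\to S_q^n[X]\|_{cb}=n^{\frac1q}$ with $X=S_p^n[E]$: composing $H\otimes id_E$ with this formal identity, it suffices to prove
\begin{align*}
\big\|H\otimes id_E:\ell_p^{n^2}[E]\to M_n(S_p^n[E])\big\|_{cb}\le n^{1-\frac1p}
\end{align*}
for every $1\le p\le\infty$. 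Since $H$ does not depend on $p$, the general $p$ then follows from the endpoints $p=1$ and $p=\infty$ by ordinary operator--space interpolation (\ref{interpolation step I}) applied to the single operator $H\otimes id_E$; so, unlike in Theorem \ref{Theorem I: teleportation-general}, the refined Theorem \ref{Thm general interpolation} is not needed here.

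The endpoint $p=\infty$ is easy: $H:\ell_\infty^{n^2}\to M_n\otimes_{min}M_n$ is a completely positive map between C$^*$-algebras, so (as in (\ref{equation embeding p=infty})) its cb--norm is attained at the unit, and by Proposition \ref{basic properties}(c) the family $(\eta_{k,l})$ is orthonormal, whence $H(\uno_{n^2})=n\sum_{k,l}|\eta_{k,l}\rangle\langle\eta_{k,l}|=n\,\uno_{n^2}$ has norm $n=n^{1-\frac1\infty}$; tensoring with $id_E$ and using (\ref{mmp min norm}) preserves this value.

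The heart of the argument is the endpoint $p=1$, i.e. the \emph{sharp} bound $\|H\otimes id_E:\ell_1^{n^2}[E]\to M_n(S_1^n[E])\|_{cb}\le 1$. The obstacle here is that the obvious routes overshoot by a factor $n$: going through the isometric embedding $i$ of Corollary \ref{orth-complementation} (so that $H=n\,i$) and then relaxing the outer index from $1$ to $\infty$ only yields the bound $n$, and likewise writing $\tfrac1n H$ as a completely positive, trace--preserving map gives $\|H\|_{cb}\le n$. To recover the correct constant $1$ one must exploit the maximal entanglement, and the cleanest way I see is a ``superdense'' factorization $H=\Sigma\circ U\circ\mu$, dual to the teleportation factorization $J=(P\otimes id_n)\circ\iota$. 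Here $\mu(x)=x\otimes|\psi_n\rangle\langle\psi_n|$ tensors by the maximally entangled state, viewed as a unit--ball element of $M_n\otimes_{min}S_1^n$ (as before Lemma \ref{lemma: assist-ent estimate}, the associated map is an identity map of cb--norm one); bundling the $S_1^n$--leg with the classical register via $\ell_1^{n^2}[E]\hat{\otimes} S_1^n=\ell_1^{n^2}(S_1^n)[E]$ and invoking (\ref{mmp min norm}) exactly as in Lemma \ref{lemma: assist-ent estimate} shows that $\mu\otimes id_E:\ell_1^{n^2}[E]\to \ell_1^{n^2}(S_1^n)[E]\otimes_{min}M_n$ is completely contractive. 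The map $U$, which conjugates the two legs by the controlled unitaries $u_k\otimes v_l$, is a blockwise unitary conjugation in the classical index and hence a complete isometry; and $\Sigma$ sums out the $\ell_1$--register (a norm--one functional, completely contractive by (\ref{mmp projective norm})) and lands in $S_1^n[E]\otimes_{min}M_n=M_n\otimes_{min}S_1^n[E]=M_n(S_1^n[E])$ by commutativity of $\otimes_{min}$. Composing the three maps gives cb--norm $\le1$, as required. I expect this $p=1$ sharpness, getting $1$ rather than $n$, to be the only genuinely delicate point; as an independent check one can verify the scalar case directly from (\ref{norm p>q}), where the target norm is a supremum over $A,B\in B_{S_2^n}$ of $\|(A\otimes\uno_n)H(x)(B\otimes\uno_n)\|_{S_1^{n^2}}$ and the key estimate is $\|(A\otimes\uno_n)\eta_{k,l}\|_2=n^{-1/2}\|A\|_{S_2^n}$, coming precisely from $\eta_{k,l}=(u_k\otimes v_l)\overline{\psi}_n$ being a maximally entangled vector. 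Once the two endpoints are in hand, interpolation and the reduction above complete the proof.
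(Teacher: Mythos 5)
Your overall scheme (complete positivity, reduction to $q=\infty$ via the $n^{1/q}$ identity, two endpoints, plain interpolation since $H$ does not depend on $p$) is exactly the paper's, and your $p=\infty$ endpoint and interpolation step are correct. The gap is in your $p=1$ endpoint, and it is fatal to the factorization $H=\Sigma\circ U\circ\mu$ as you set it up: the middle map $U$ is not a complete isometry, and is not even contractive. The principle you invoke, that a blockwise unitary conjugation controlled by the classical index is a complete isometry, is valid on an $\ell_1$-direct sum $\ell_1^{n^2}(F)$, i.e.\ when everything being conjugated lives \emph{inside} the blocks; but in your space $\ell_1^{n^2}(S_1^n)[E]\otimes_{min}M_n$ the $M_n$-leg is attached \emph{across} the $\ell_1$-sum by the minimal tensor product, and conjugating that leg by a unitary depending on the $\ell_1$-index genuinely changes the norm. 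Concretely, take $n=2$ and $E=\C$, so $u_1=\mathrm{diag}(-1,1)$, $u_2=\uno_2$, $v_1$ the flip and $v_2=\uno_2$, and consider
$$w=e_{1,1}\otimes e_{11}\otimes e_{22}+e_{1,2}\otimes e_{11}\otimes e_{11}\in\ell_1^{4}(S_1^2)\otimes_{min}M_2=CB\big(\ell_\infty^{4}(M_2),M_2\big).$$
As a map, $w$ sends $(X_{k,l})\mapsto(X_{1,1})_{11}e_{22}+(X_{1,2})_{11}e_{11}$, which is block diagonal in the outer $M_2$-leg, so $\|w\|_{cb}=1$; but $U(w)=(e_{1,1}+e_{1,2})\otimes e_{11}\otimes e_{11}$ sends $(X_{k,l})\mapsto\big((X_{1,1})_{11}+(X_{1,2})_{11}\big)e_{11}$, which has norm $2$ (test $X_{1,1}=X_{1,2}=e_{11}$). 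Hence $\|U\|_{cb}\geq\|U\|\geq 2$, and your composition only yields $\|H\otimes id_E\|_{cb}\leq\|U\|_{cb}$, not $\leq 1$. (Your maps $\mu$ and $\Sigma$ are fine; the defect is entirely in $U$.)

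The paper sidesteps this by never forming a controlled-unitary map at all: since $\ell_1^{n^2}$ is a maximal operator space, $\|H:\ell_1^{n^2}\to M_n(S_1^n)\|_{cb}=\|H\|$, and by convexity this norm is attained at the basis vectors $e_{k,l}$. For a \emph{fixed} $(k,l)$ one has $H(e_{k,l})=\sum_{i,j}u_ke_{i,j}u_k^*\otimes v_le_{i,j}v_l^*$, i.e.\ the image of $\sum_{i,j}e_{i,j}\otimes e_{i,j}$ (which has norm one in $M_n(S_1^n)$) under $\mathrm{Ad}_{u_k}\otimes\mathrm{Ad}_{v_l}$, a tensor product of two fixed unitary conjugations and therefore a complete isometry by (\ref{mmp min norm}); so $\|H(e_{k,l})\|_{M_n(S_1^n)}=1$. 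The vector-valued case then follows from (\ref{mmp projective norm}) together with the complete contraction $M_n(S_1^n)\hat{\otimes}E\to M_n(S_1^n[E])$. Note that your ``independent scalar check'' via (\ref{norm p>q}) is in fact the germ of this correct argument: combined with the maximality of $\ell_1^{n^2}$ (which upgrades a scalar bound on basis vectors to a cb-bound) and the projective-tensor step just described, it yields a complete proof, and this is essentially what the paper does. As written, however, your main argument for the crucial endpoint does not go through.
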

\begin{proof}
Since the domain space is a commutative C$^*$-algebra, completely positivity is equivalent to positivity. Hence, the fact that $n|\eta_{k,l}\rangle\langle\eta_{k,l}|$ is a positive element for every $k,l$ assures that $H$ is indeed completely positive. On the other hand, we have already explained that
\begin{align*}
\big\|H\otimes id_E:\ell_p^{n^2}[E] \rightarrow S_q^n(S_p^n[E])\big\|_{cb}\leq n^\frac{1}{q}
\big\|H\otimes id_E:\ell_p^{n^2}[E] \rightarrow M_n(S_p^n[E])\big\|_{cb},
\end{align*}so we must show the estimate
\begin{align}\label{p-embedding superdense}
\big\|H\otimes id_E:\ell_p^{n^2}[E] \rightarrow M_n(S_p^n[E])\big\|_{cb}\leq n^{1-\frac{1}{p}}
\end{align}for every operator space $E$. In order to show this estimate let us start with the case $p=1$,
\begin{align}\label{p-embedding superdense p=1}
\big\|H\otimes id_E:\ell_1^{n^2} [E]\rightarrow M_n(S_1^n[E])\big \|_{cb}\leq 1.
\end{align}Since $\ell_1^{n^2}$ is a maximal operator space (see \cite[Chapter 3]{Pisierbook}), we have that $$\|H:\ell_1^{n^2} \rightarrow M_n(S_1^n)\|_{cb}=\|H:\ell_1^{n^2} \rightarrow M_n(S_1^n)\|.$$ Furthermore, by a convexity argument one can easily deduce that $\|H\|=\sup_{k,l}\|H(e_{k,l})\|_{M_n(S_1^n)}$. Now, by noting that $$H(e_{k,l})=n|\eta_{k,l}\rangle\langle\eta_{k,l}|=\sum_{i,j=1}^nu_ke_{i,j}u_k^*\otimes v_le_{i,j}v_l^*,$$and recalling that $\|\sum_{i,j=1}^ne_{i,j}\otimes e_{i,j}\|_{M_n(S_1^n)}=1$ (see the proof of Lemma \ref{lemma: assist-ent estimate}), it is very easy to conclude that $\|H(e_{k,l})\|_{M_n(S_1^n)}=1$ for every $k,l$. On the other hand, according to (\ref{mmp projective norm}) the previous estimate implies that
\begin{align*}
\big\|H\otimes id_E:\ell_1^{n^2} [E]\rightarrow M_n(S_1^n)\hat{\otimes}E\big \|_{cb}\leq 1.
\end{align*}Hence, (\ref{p-embedding superdense p=1}) follows from the fact that $\big\|id:M_n(S_1^n)\hat{\otimes}E\rightarrow M_n(S_1^n[E])\big \|_{cb}\leq 1$, which can be obtained from the definition of the projective tensor norm.

In order to prove the estimate for $p=\infty$ we just note that
\begin{align*}
\big\|H:\ell_\infty^{n^2} \rightarrow M_{n^2}\big\|_{cb}=\big\|H(1)\big\|_{M_{n^2}}=\big \|n\uno_{n^2}\big\|_{M_{n^2}}=n.
\end{align*}According to (\ref{mmp min norm}), this implies that
\begin{align}\label{p-embedding superdense p=infty}
\big\|H\otimes id_E:\ell_\infty^{n^2}[E] \rightarrow M_n(M_n[E])\big\|_{cb}=n.
\end{align}
The estimate (\ref{p-embedding superdense}) for the general case $1<p<\infty$ can be now deduced from (\ref{p-embedding superdense p=1}), (\ref{p-embedding superdense p=infty}) and a standard interpolation argument (\ref{interpolation step I}).
\end{proof}
\begin{prop}\label{prop:p-projection superdense}
Let $Q:M_n\otimes M_n\rightarrow \ell_\infty^{n^2}$ be the linear map defined by
\begin{align*}
Q(\rho)=\frac{1}{n}\sum_{k,l=1}^n \langle\eta_{k,l}|\rho|\eta_{k,l}\rangle e_{k,l}, \text{    } \text{    } \rho\in M_{n^2}.
\end{align*}Then, $Q$ is completely positive and it verifies, for every operator space $E$,
\begin{align}\label{p-projection superdense}
\big\|Q\otimes id_E:S_q^n(S_p^n[E])\rightarrow \ell_p^{n^2}[E]\big\|_{cb}\leq n^{\frac{1}{p}-\frac{1}{q}-1},
\end{align}for every $1\leq p \leq q \leq\infty$.
\end{prop}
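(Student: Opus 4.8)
The statement is the exact dual of Proposition~\ref{prop: p embedding superdense}, so the plan is to imitate that proof: first verify complete positivity, then peel off the outer index by a standard identity map, and finally settle the remaining estimate either by invoking Corollary~\ref{orth-complementation} directly or by interpolating the two endpoints $p=1$ and $p=\infty$.

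Complete positivity is the easy part. Since the target $\ell_\infty^{n^2}$ is a commutative $C^*$-algebra, it suffices to note that $Q$ is positive; and each coordinate functional $\rho \mapsto \langle\eta_{k,l}|\rho|\eta_{k,l}\rangle$ is a compression $V^*\rho V$ by the column vector $\eta_{k,l}$, hence positive. A positive map into an abelian $C^*$-algebra is automatically completely positive, so $Q$ is completely positive.

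For the norm estimate, I would first reduce the outer index from $q$ to $p$. Using the identity $\big\|id_n\otimes id_X:S_q^n[X]\rightarrow S_p^n[X]\big\|_{cb}=n^{\frac{1}{p}-\frac{1}{q}}$ (for $1\leq p\leq q\leq\infty$) with $X=S_p^n[E]$, factoring $Q\otimes id_E$ through $S_p^n(S_p^n[E])$ shows that it is enough to prove
\[
\big\|Q\otimes id_E:S_p^n(S_p^n[E])\rightarrow \ell_p^{n^2}[E]\big\|_{cb}\leq n^{-1}.
\]
The cleanest way to get this reduced bound is to observe that $Q=\frac{1}{n}P$, where $P$ is the completely contractive projection of Corollary~\ref{orth-complementation}; since $P\otimes id_E$ is a complete contraction from $S_p^{n^2}[E]=S_p^n(S_p^n[E])$ onto $\ell_p^{n^2}[E]$, multiplying by $\frac{1}{n}$ gives exactly $n^{-1}$. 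Alternatively, to stay parallel with the proof of Proposition~\ref{prop: p embedding superdense}, I would do the two endpoints and interpolate: at $p=\infty$, $Q:M_{n^2}\rightarrow \ell_\infty^{n^2}$ is completely positive between $C^*$-algebras, so its $cb$-norm equals $\|Q(\uno_{n^2})\|_{\ell_\infty^{n^2}}$, and since $\|\eta_{k,l}\|=1$ by part~c) of Proposition~\ref{basic properties} one finds $Q(\uno_{n^2})=\frac{1}{n}\sum_{k,l}e_{k,l}$, giving $cb$-norm $n^{-1}$; at $p=1$, the map $nQ$ is completely positive and trace preserving (trace preservation because $\sum_{k,l}|\eta_{k,l}\rangle\langle\eta_{k,l}|=\uno_{n^2}$, again by part~c)), hence a quantum channel and completely contractive from $S_1^n(S_1^n[E])=S_1^{n^2}\hat{\otimes}E$ to $\ell_1^{n^2}[E]$ via (\ref{mmp projective norm}). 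Interpolating with (\ref{interpolation step I}), using $\big(M_n(M_n[E]),S_1^n(S_1^n[E])\big)_{\frac{1}{p}}=S_p^n(S_p^n[E])$ and $\big(\ell_\infty^{n^2}[E],\ell_1^{n^2}[E]\big)_{\frac{1}{p}}=\ell_p^{n^2}[E]$, yields the same $n^{-1}$ for all $p$ (both endpoint constants being $n^{-1}$). In either route, combining the reduction factor $n^{\frac{1}{p}-\frac{1}{q}}$ with the reduced bound $n^{-1}$ produces the claimed $n^{\frac{1}{p}-\frac{1}{q}-1}$.

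There is no serious obstacle here; this is essentially a dualization of the embedding proposition. The only points requiring care are the exponent bookkeeping and the correct handling of the vector-valued identifications — the projective-tensor identity $S_1^n(S_1^n[E])=S_1^{n^2}\hat{\otimes}E$ at $p=1$ and the Fubini-type identity $S_p^n(S_p^n[E])=S_p^{n^2}[E]$ needed both for the interpolation and for the application of Corollary~\ref{orth-complementation}.
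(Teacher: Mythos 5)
Your proposal is correct and its primary route is exactly the paper's proof: the paper also writes $Q=\frac{1}{n}P$, peels off the outer index at cost $n^{\frac{1}{p}-\frac{1}{q}}$, and invokes the complete contractivity of $P\otimes id_E$ on $S_p^n(S_p^n[E])=S_p^{n^2}[E]$ from Corollary \ref{orth-complementation}. The endpoint-interpolation alternative you sketch is also sound but is extra machinery the paper does not need.
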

\begin{proof}
Note that $Q=\frac{1}{n}P$, where $P$ was introduced in Corollary \ref{orth-complementation}. Therefore, the statement of the proposition is clear just noting that
\begin{align*}
\big\|Q\otimes id_E: S_q^n(S_p^n[E])\rightarrow \ell_p^{n^2}[E]\big\|_{cb}\leq n^{\frac{1}{p}-\frac{1}{q}}\frac{1}{n}\big\|P\otimes id_E:S_p^n(S_p^n[E])\rightarrow \ell_p^{n^2}[E]\big\|_{cb}=n^{\frac{1}{p}-\frac{1}{q}-1}.
\end{align*}
\end{proof}
\begin{proof}[Proof of Theorem \ref{Theorem superdense embedding}]
Again, by duality it suffices to consider the case Let $1\leq p\leq q\leq \infty$.

Let us define the linear maps
\begin{align*}
H_{p,q}:=n^{\frac{1}{p}-1-\frac{1}{q}}H:\ell_\infty^{n^2} \rightarrow M_n\otimes M_n,
\end{align*}where $H$ was defined in Proposition \ref{prop: p embedding superdense}, and
\begin{align*}
Q_{p,q}:=n^{1-\frac{1}{p}+\frac{1}{q}}Q:M_n\otimes M_n\otimes M_n\rightarrow \ell_\infty^{n^2},
\end{align*}where $Q$ was defined in Proposition \ref{prop:p-projection superdense}. According to Proposition \ref{prop: p embedding superdense} and Proposition \ref{prop:p-projection superdense}, both maps are completely positive and they verify the following estimates:
\begin{align*}
\big\|H_{p,q}\otimes id_E:\ell_p^{n^2}[E] \rightarrow S_q^n(S_p^n[E])\big\|_{cb}\leq 1,\text{    }\text{    } \text{  and   } \text{    } \text{    } \big\|Q_{p,q}\otimes id_E:S_q^n(S_p^n[E])\rightarrow \ell_p^{n^2}[E]\big\|_{cb}\leq 1.
\end{align*}Therefore, it suffices to show the algebraic identification $Q_{p,q} \circ H_{p,q}=id_{\ell_\infty^{n^2}}$.
This is very easy by noting that for every $e_{k,l}\in \ell_\infty^{n^2}$
\begin{align*}
Q_{p,q} \big(H_{p,q}(e_{k,l})\big)=Q(H(e_{k,l}))=e_{k,l}.
\end{align*}
\end{proof}
\section{Some results about covariant channels}\label{Sec: Some results about covariant channels}
In this section we will introduce a nice family of channels and we will explain why computing some capacities of these channels is easier than in the general case. First, let us recall that a state (or density operator) $\rho$ is a positive operator (acting on Hilbert spaces) with trace equal one. In fact, in this work we will restrict to finite dimensional Hilbert spaces, so a state (or density matrix) is a semidefinite positive matrix $\rho\in M_n$ such that $\tr(\rho)=1$. We will write $\rho\in S_1^n$ to denote a general state. In fact, very often we will consider bipartite states, which means that $\rho$ is a state acting on the tensor product of two Hilbert spaces, say $\ell_2^d\otimes_2 \ell_2^n$. In this case, we will denote $\rho\in S_1^d\otimes S_1^n=S_1^{dn}$. We will say that $\rho$ is a pure state if it is a rank one projection $\rho=|\psi\rangle\langle \psi|$ onto a unit vector $\psi\in \ell_2^n$. To be consistent with the standard notation in quantum informatio!
 n, we wil
 l write $|\psi\rangle \in \C^n$ to denote one of these unit vectors\footnote{Ket-notation $|\psi\rangle$ denotes a general unit element in a Hilbert space, while bra-notation $\langle \psi |$ is used to denote it as a dual element.}. Then, a general pure bipartite state will be described by $\rho=|\psi\rangle\langle \psi|$ with $|\psi\rangle\in \C^d\otimes \C^n=\C^{dn}$. We will also make use of a very important quantity in quantum information called von Neumann entropy. Given a state $\rho$, its von Neumann entropy is defined as $$S(\rho)=-tr\big(\rho \log_2\rho\big).$$This is a generalization of the Shannon entropy of a probability distribution already introduced in Theorem \ref{Main Theorem Erasure}. We start this section by recalling the following well known result, which can be found in \cite{AHW}.
\begin{lemma}\label{derivate-Entropy}
The function $F(\rho,p)= \frac{1-\|\rho\|_p}{p-1}$ is well defined for $p$ positive with $p\neq 1$ and $\rho$ a density matrix. It can be extended by continuity to $p\in (0,\infty)$ and this extension verifies $$F(\rho,1)=-\frac{d}{dp}\|\rho\|_p\big|_{p=1}=S(\rho).$$Moreover, the convergence at $p=1$ is uniform in the states $\rho$.

In particular, for every net $(\rho_p)_p$ of states such that  $\lim_{p  \rightarrow 1}\rho_p= \rho$ in the trace class norm, we have that $\lim_{p  \rightarrow 1}F(\rho_p,p)=S(\rho)$.
\end{lemma}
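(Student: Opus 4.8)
The plan is to establish the three assertions of Lemma \ref{derivate-Entropy} in sequence: first that $F(\rho,p)=\frac{1-\|\rho\|_p}{p-1}$ extends continuously to $p\in(0,\infty)$ with $F(\rho,1)=S(\rho)$, then that this limit equals $-\frac{d}{dp}\|\rho\|_p|_{p=1}$, and finally that the convergence is \emph{uniform} over all states. For a fixed state $\rho$ with eigenvalues $(\lambda_j)_j$ (so $\lambda_j\geq 0$ and $\sum_j\lambda_j=1$), I would begin by writing $\|\rho\|_p=\big(\sum_j\lambda_j^p\big)^{1/p}$ and analyzing $g(p):=\log\|\rho\|_p=\frac{1}{p}\log\big(\sum_j\lambda_j^p\big)$. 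The key elementary computation is the Taylor expansion near $p=1$: since $\sum_j\lambda_j=1$, one has $\sum_j\lambda_j^p=1+(p-1)\sum_j\lambda_j\log\lambda_j+o(p-1)$, so that $\log\big(\sum_j\lambda_j^p\big)=(p-1)\sum_j\lambda_j\log\lambda_j+o(p-1)$. Dividing by $p$ and exponentiating gives $\|\rho\|_p=1+(p-1)\sum_j\lambda_j\log\lambda_j+o(p-1)$, whence
\begin{align*}
F(\rho,p)=\frac{1-\|\rho\|_p}{p-1}\longrightarrow -\sum_j\lambda_j\log\lambda_j=S(\rho)
\end{align*}
as $p\to 1$ (after converting to base-$2$ logarithms). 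This simultaneously identifies the continuous extension and shows it is the derivative $-\frac{d}{dp}\|\rho\|_p|_{p=1}$, since $F(\rho,p)$ is precisely the negative difference quotient of $p\mapsto\|\rho\|_p$ at $p=1$ using $\|\rho\|_1=1$.

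The substantive part, and the step I expect to be the main obstacle, is the \emph{uniformity} of the convergence over the (noncompact, in the sense of varying dimension and degenerating eigenvalues) set of all density matrices. The difficulty is that the function $t\mapsto t\log t$ has unbounded derivative as $t\to 0^+$, so naive bounds on the remainder in the Taylor expansion blow up for states with very small eigenvalues, and moreover the number of eigenvalues is unbounded. To handle this I would not expand termwise but instead work with the function $\phi(p)=\sum_j\lambda_j^p$ directly and control $F$ through convexity and monotonicity. The crucial structural fact is that $p\mapsto\|\rho\|_p$ is nonincreasing and $\log$-convex-type estimates hold uniformly; concretely, one can bound $S(\rho)-F(\rho,p)$ by an expression depending only on $p$ (not on $\rho$) using that the entropy $-\sum_j\lambda_j\log\lambda_j$ and the derivative of $\phi$ are controlled by the single summability constraint $\sum_j\lambda_j=1$. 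The cleanest route is to prove two-sided bounds of the form $F(\rho,p)\leq S(\rho)$ and $S(\rho)-F(\rho,p)\leq \varepsilon(p)$ with $\varepsilon(p)\to 0$ as $p\to 1$ independently of $\rho$, exploiting that the worst case is governed by the convexity of $p\mapsto\log\phi(p)$ and that its second derivative admits a uniform estimate on any interval $[1-\delta,1+\delta]$. Since this is exactly the content cited from \cite{AHW}, I would either quote that reference for the uniform estimate or reconstruct it via the mean value theorem applied to $p\mapsto\log\phi(p)$ together with a uniform bound on the variance-type quantity $\frac{\phi''\phi-(\phi')^2}{\phi^2}$.

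Granting the uniform convergence, the final clause about nets $(\rho_p)_p$ with $\rho_p\to\rho$ in trace norm follows by a standard $\varepsilon/2$ argument: I would write
\begin{align*}
|F(\rho_p,p)-S(\rho)|\leq |F(\rho_p,p)-F(\rho_p,1)|+|S(\rho_p)-S(\rho)|,
\end{align*}
where the first term is $\leq \varepsilon(p)\to 0$ by the \emph{uniform} convergence at $p=1$ (this is precisely why uniformity is needed — it lets us control $F(\rho_p,p)$ even though $\rho_p$ varies with $p$), and the second term tends to $0$ by continuity of the von Neumann entropy with respect to the trace norm, which holds in fixed finite dimension and more generally follows from Fannes-type continuity estimates. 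Combining the two gives $\lim_{p\to1}F(\rho_p,p)=S(\rho)$, completing the proof.
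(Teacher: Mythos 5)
Your pointwise Taylor computation at $p=1$ and your closing $\varepsilon/2$ argument for the net statement are both correct, and they reproduce the paper's (very terse) proof architecture: the paper cites \cite{AHW} for the first part and obtains the ``in particular'' clause by combining the uniform convergence with the entropy continuity bound of Theorem \ref{cont-Entropy}. The genuine problem is in the middle step. You set out to prove uniformity ``over the (noncompact, in the sense of varying dimension and degenerating eigenvalues) set of all density matrices,'' i.e.\ a bound $S(\rho)-F(\rho,p)\leq\varepsilon(p)$ with $\varepsilon(p)\to 0$ \emph{independent of $\rho$ across all dimensions}. No such bound exists, so the reconstruction you sketch cannot be carried out: for the maximally mixed state $\rho_n=\frac{1}{n}\uno_n$ one has $\|\rho_n\|_p=n^{\frac{1}{p}-1}$, hence for any fixed $p>1$, $F(\rho_n,p)=\frac{1-n^{1/p-1}}{p-1}\leq\frac{1}{p-1}$, while $S(\rho_n)=\ln n\to\infty$; thus $\sup_\rho\big(S(\rho)-F(\rho,p)\big)=\infty$ for every fixed $p\neq 1$, and the same computation rules out a dimension-free estimate for the unnormalized quantity $\frac{1-\mathrm{tr}(\rho^p)}{p-1}$ treated in \cite{AHW}. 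The uniformity asserted in Lemma \ref{derivate-Entropy} --- and the only uniformity used later, e.g.\ in the proof of Theorem \ref{theorem:p-norm pure states}, where it is applied to states of the fixed algebras $M_{dn}$ and $M_{dm}$ --- is uniformity over the states of a \emph{fixed} finite-dimensional matrix algebra. There the state space is compact, which is precisely the compactness you claim is unavailable; your worry about ``the number of eigenvalues being unbounded'' stems from misreading the scope of the statement.

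Once the statement is read with fixed dimension $n$, your own tools suffice and the proof is easier than you anticipate: since $\|\rho\|_1=1$, the mean value theorem gives $F(\rho,p)=-\frac{d}{dq}\|\rho\|_q\big|_{q=\xi}$ for some $\xi$ between $1$ and $p$, and the map $(\rho,\xi)\mapsto\frac{d}{dq}\|\rho\|_q\big|_{q=\xi}$ is jointly continuous on $\{\text{states of }M_n\}\times[1-\delta,1+\delta]$ because $(\lambda,\xi)\mapsto\lambda^{\xi}\ln\lambda$ extends continuously by $0$ to $\lambda=0$ uniformly for $\xi\geq 1-\delta$; compactness then yields uniform convergence, with constants that do (and must) depend on $n$. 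Two smaller points. First, your fallback of simply quoting \cite{AHW} is legitimate and is what the paper does, but \cite{AHW} treats $\frac{1-\mathrm{tr}(\rho^p)}{p-1}$, so the (easy, but nontrivial to state uniformly) passage to $F(\rho,p)=\frac{1-\|\rho\|_p}{p-1}$ still has to be recorded, as the paper itself remarks. Second, the limit your expansion produces is the natural-log entropy $-\sum_j\lambda_j\ln\lambda_j$; no ``conversion to base-$2$ logarithms'' should be performed, since the identity $-\frac{d}{dp}\|\rho\|_p\big|_{p=1}=S(\rho)$ holds only for the $\ln$-entropy normalization (cf.\ Remark \ref{log 2 vs lgn}).
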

Indeed, although the first part of the result was proved in  \cite{AHW} for the function  $\frac{1-\|\rho\|_p^p}{p-1}$, it is very easy to conclude that, then, the same result must hold for
the function $F(\rho,p)$.
On the other hand, the second part of the statement is a direct consequence of the uniform convergence and the continuity of the von Neuman entropy (see for instance \cite{Aud1}):
\begin{theorem}\label{cont-Entropy}
For all $n$-dimensional states $\rho$, $\sigma$ we have
\begin{align*}
|S(\rho)-S(\sigma)|\leq T\log (n-1)+ H((T,1-T)),
\end{align*}where $T=\frac{\|\rho-\sigma\|_1}{2}$ and $H$ denotes the Shannon entropy.
\end{theorem}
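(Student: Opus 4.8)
The plan is to prove the Fannes--Audenaert type continuity bound for the von Neumann entropy by reducing it to a comparison between the eigenvalue distributions of $\rho$ and $\sigma$. First I would diagonalize both states and pass to their (ordered) eigenvalue sequences $(p_i)$ and $(q_i)$; the key classical input is that the trace distance $T=\frac{\|\rho-\sigma\|_1}{2}$ dominates the $\ell_1$-distance between suitably matched eigenvalue vectors, so the problem becomes the analogous statement for the Shannon entropies of the probability vectors $(p_i)$ and $(q_i)$. More precisely, one uses that for Hermitian $\rho-\sigma$ the quantity $T$ equals the sum of the positive eigenvalues of $\rho-\sigma$, and a majorization / coupling argument shows $\sum_i |p_i^{\downarrow}-q_i^{\downarrow}| \le 2T$, where $\downarrow$ denotes the decreasing rearrangement.

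Next I would estimate the difference of Shannon entropies $|H((p_i)) - H((q_i))|$ in terms of $\varepsilon := \sum_i |p_i-q_i|$ (after rearrangement). The standard route is to write the difference as $\sum_i \big(\phi(p_i)-\phi(q_i)\big)$ with $\phi(x)=-x\log_2 x$, and bound each contribution using the concavity and the modulus of continuity of $\phi$. Because $\phi$ is concave with $\phi(0)=0$, one obtains a bound of the shape $|H(p)-H(q)| \le \varepsilon \log_2(n-1) + H((T,1-T))$ after optimizing the local perturbation; the term $H((T,1-T))$ is exactly the entropy cost of moving a total mass $T$ around, and the factor $\log_2(n-1)$ comes from the maximal spread of that mass over the remaining $n-1$ coordinates. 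I would either invoke the sharp Audenaert inequality directly (it is quoted from \cite{Aud1}) or reconstruct it from the two ingredients above.

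The main obstacle I expect is getting the \emph{sharp} constants rather than a crude Fannes-type bound. A naive application of the mean value theorem to $\phi$ gives a bound with an extra additive $\log$ term and is not tight; obtaining precisely $T\log(n-1)+H((T,1-T))$ requires the careful optimization of Audenaert, essentially solving the variational problem of maximizing $|H(p)-H(q)|$ subject to the constraints $\sum_i|p_i-q_i|=2T$ and fixed dimension $n$. The extremal configuration concentrates the perturbation so that one coordinate changes by $T$ and the compensating mass is spread uniformly over the other $n-1$ coordinates, which is exactly what produces the two stated terms.

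Since the statement is quoted verbatim from \cite{Aud1}, the cleanest proof in this paper is simply to cite that reference: the bound is Audenaert's refinement of the Fannes inequality, and no independent argument is needed here. I would therefore present the reduction to eigenvalue distributions as motivation and then defer the sharp endgame to \cite{Aud1}, noting that the only facts used downstream (via Lemma \ref{derivate-Entropy}) are the continuity of $S$ and its uniform behaviour near $p=1$, both of which follow immediately once the displayed inequality is in hand.
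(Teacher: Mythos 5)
Your proposal is correct and takes essentially the same approach as the paper: the paper gives no proof of Theorem \ref{cont-Entropy} at all, simply quoting it as Audenaert's sharp Fannes-type inequality from \cite{Aud1}, which is exactly what you conclude is the right course of action here. Your preliminary sketch of Audenaert's argument (reduction to eigenvalue distributions plus the sharp classical optimization) is reasonable motivation but is neither needed for, nor present in, the paper, whose only downstream use of the theorem is the uniform-continuity input to Lemma \ref{derivate-Entropy} and Theorem \ref{theorem:p-norm pure states}.
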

%
%
Lemma \ref{derivate-Entropy} has motivated the study of channel capacities by means of the derivative of certain $p$-norms defined on these channels (see for instance \cite{AHW} and \cite{DJKR}). More precisely, since a quantum channel $\Ne$ is nothing else than a completely positive and trace preserving map from $M_n$ to $M_m$ (we will denote it by $\Ne:S_1^n\rightarrow S_1^m$) one can consider (and differentiate) de function $f(p)=\|\Ne:S_1^n\rightarrow S_p^m\|$. Indeed, the quantity $\frac{d}{dp}f(p) \ba_{p=1}$ has been shown to be related to the (product state) classical capacity, also called Holevo capacity,  of the quantum channel $\Ne$. However, in the recent paper \cite{JuPa2} the authors showed that, in order to exactly describe the (product state) classical capacity of a quantum channel with $d$-assisted entanglement, $C_{prod}^d(\Ne)$, as a derivative of a function, one has to consider the completely $\ell_q(S_q^d)$-summing  norm of the channel. Formally, one has !
 the follo
 wing result.
\begin{theorem}\label{mainJP2}
Given a quantum channel $\Ne:S_1^n\rightarrow S_1^m$ and a natural number $d$ verifying $1\leq d\leq n$, we find
\begin{equation*}
C_{prod}^d(\Ne)= \frac{d}{dp}\big[\pi_{q,d}(\Ne^*)\big]|_{p=1},
\end{equation*}where $\frac{1}{p}+\frac{1}{q}=1$. Here, $\pi_{q,d}(\Ne^*)$ denotes the $\ell_q(S_q^d)$-summing norm of $\Ne^*:M_m\rightarrow M_n$.
\end{theorem}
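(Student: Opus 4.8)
\textbf{Proof proposal for Theorem \ref{mainJP2}.} The statement I am asked to prove is precisely \cite[Theorem 1.1]{JuPa2}, which identifies the $d$-assisted product-state classical capacity $C_{prod}^d(\Ne)$ with the $p$-derivative at $p=1$ of the completely $\ell_q(S_q^d)$-summing norm of the adjoint channel $\Ne^*$. Since this is an external result cited from \cite{JuPa2}, the honest plan is to reconstruct its proof from the machinery that \cite{JuPa2} and the present paper make available, rather than to reprove the full Holevo--Schumacher--Westmoreland theory from scratch. The plan is to start from the operational definition of $C_{prod}^d(\Ne)$ as a maximum of Holevo-type quantities over ensembles whose signal states live on $d$ entangled dimensions, and then to recognize the Holevo quantity as a difference of von Neumann entropies to which Lemma \ref{derivate-Entropy} applies.

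First I would write $C_{prod}^d(\Ne)$ explicitly as the supremum, over probability distributions $\{p_i\}$ and states $\{\rho_i\}$ on $\ell_2^d \otimes_2 \ell_2^n$, of the mutual-information / Holevo expression $S\big(\sum_i p_i (\id_d\otimes\Ne)(\rho_i)\big) - \sum_i p_i\, S\big((\id_d\otimes\Ne)(\rho_i)\big)$, appropriately normalized. The key analytic input is Lemma \ref{derivate-Entropy}: for a density matrix $\sigma$ one has $S(\sigma) = -\frac{d}{dp}\|\sigma\|_p\big|_{p=1}$, with the convergence uniform in $\sigma$. Thus each entropy appearing in the Holevo quantity can be replaced by a $p$-derivative of a Schatten $p$-norm, and the uniformity in Lemma \ref{derivate-Entropy} is exactly what licenses interchanging the supremum over ensembles with the $p \to 1$ limit. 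The second step is therefore to express the whole ensemble-optimization, after this substitution, as the derivative at $p=1$ of a single quantity that is a supremum of normalized $\ell_q(S_q^d)$-type expressions.

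The third step is the genuinely technical identification: I must show that the supremum obtained after replacing entropies by $p$-norm derivatives is, term by term, the completely $\ell_q(S_q^d)$-summing norm $\pi_{q,d}(\Ne^*)$ of the adjoint map. This is where the duality between $S_p$ and $S_q$ (with $\frac1p+\frac1q=1$), recalled in Section \ref{Sec: Basic notions} via the pairing \eqref{Scalar Pairing}, enters: passing from $\Ne$ acting on states to $\Ne^*$ acting on observables is the adjoint operation $\|T^*\|_{cb}=\|T\|_{cb}$, and the $\ell_q(S_q^d)$-summing norm is the natural vector-valued norm dual to the $\ell_p(S_p^d)$ structure in which the ensemble states are organized. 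I would use the operator-space description of summing norms, matching the $d$ entangled dimensions of the ensemble to the inner $S_q^d$ factor and the ensemble index to the outer $\ell_q$ factor.

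The main obstacle, and the place where one must be most careful, is the exact bookkeeping in this third step: verifying that the optimization over \emph{product-state} ensembles (as opposed to arbitrary ones) corresponds precisely to the \emph{completely} summing norm, and that the normalizing $\log n$ / $\log(nd)$ additive constants are absorbed correctly when the derivative is taken. The uniform-convergence clause of Lemma \ref{derivate-Entropy} handles the limit--supremum interchange cleanly, so the analytic difficulty is mild; the combinatorial-functorial difficulty of lining up the capacity formula with the definition of $\pi_{q,d}$ is where the real work lies. Since the full verification is carried out in \cite{JuPa2}, for the purposes of this paper I would present the above reduction and cite \cite[Theorem 1.1]{JuPa2} for the remaining identification, as the present paper does.
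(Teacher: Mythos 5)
Your proposal is correct and matches the paper's treatment: the paper states Theorem \ref{mainJP2} without any proof, importing it verbatim from \cite[Theorem 1.1]{JuPa2}, which is precisely what your proposal ends up doing. One small caution about your supplementary sketch: the operational starting point is not the plain Holevo quantity of the output ensemble $S\big(\sum_i p_i (id_d\otimes\Ne)(\rho_i)\big)-\sum_i p_i S\big((id_d\otimes\Ne)(\rho_i)\big)$, but rather the mutual-information-type formula of \cite[Proposition 5.5]{JuPa2} (reproduced in the paper as Proposition \ref{prop:d-restricted capacity direct sume}), which involves $S\big(\sum_i \lambda_i \Ne((tr_d\otimes id_n)(\rho_i))\big)$ together with the terms $S\big((id_d\otimes tr_n)(\rho_i)\big)-S\big((id_d\otimes \Ne)(\rho_i)\big)$ coming from the entanglement assistance.
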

\begin{remark}\label{log 2 vs lgn}
Actually, to have the equality in the previous theorem we must define $C_{prod}^d(\Ne)$ (\cite[Equation (1.3)]{JuPa2}) by using the \emph{$\ln$-entropy}, $S(\rho):= -tr(\rho \ln \rho)$, instead of using $\log_2$ as it is usually done in quantum information. Since both definitions are the same up to a multiplicative factor, we can use the standard entropy $S$ and we must then write the previous expression as $C_{prod}^d(\Ne)= \frac{1}{\ln 2}\frac{d}{dp}\big[\pi_{q,d}(\Ne^*)\big]|_{p=1}$. In order to avoid the $\ln 2$ term in all our statements, we will still consider here the definition of $C_{prod}^d(\Ne)$ as in the previous work \cite{JuPa2}. However, in order to state our results in Theorem \ref{Main theorem Depolarizing} and Theorem \ref{Main Theorem Erasure} (where we want to consider the standard definitions in quantum information theory) we will need to multiply our results by $\frac{1}{\ln 2}$. As the reader will see, this will be only reflected in replacing $\ln$ by !
 $\log_2$
 and $\ln$-entropies by $\log_2$-entropies, since these are the only terms appearing in our main statements.
\end{remark}
In many cases, the factorization associated to the  $\ell_q(S_q^d)$-summing norm of $\Ne^*$ has a particularly nice form. This is the case of covariant channels where one can show that
\begin{align}\label{covariant single}
C_{prod}^d(\Ne)=\ln  n+ \frac{d}{dp}\big\|\Ne:S_1^n\rightarrow S_p^m\big\|_d|_{p=1},
\end{align}where here $\big\|\Ne:S_1^n\rightarrow S_p^m\big\|_d$ denotes the $d$-norm: $\big\|id_d\otimes \Ne:M_d(S_1^n)\rightarrow M_d(S_p^m)\big\|$.

In this work we will mainly deal with covariant channels. The next result shows that one can restrict to pure states in the computation of this quantity.
\begin{theorem}\label{theorem:p-norm pure states}
Given a quantum channel $\Ne:S_1^n\rightarrow S_1^m$ and $1\leq d\leq n$, let us define the quantity $$S_d(\Ne):=\frac{d}{dp}\big\|\Ne:S_1^n\rightarrow S_p^m\big\|_d|_{p=1}.$$ Then,
\begin{align*}
S_d(\Ne)=\sup \Big\{S\big(id_d\otimes tr_n)(|\psi\rangle\langle \psi|)\big)-S\big(id_d\otimes \Ne)(|\psi\rangle\langle \psi|)\big)\Big\}
\end{align*}where the supremum is taking over all unit vectors $|\psi\rangle\in \C^d\otimes \C^n$.
\end{theorem}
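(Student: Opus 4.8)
The plan is to convert the completely bounded $d$-norm into a single information-theoretic ratio, differentiate it at $p=1$ with Lemma \ref{derivate-Entropy}, and then pass to pure states by a convexity argument.

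First I would put $f(p):=\|\Ne:S_1^n\to S_p^m\|_d=\|id_d\otimes\Ne:M_d(S_1^n)\to M_d(S_p^m)\|$ into closed variational form. Since $\Ne$, and hence $id_d\otimes\Ne$, is completely positive, Remark \ref{(p,1)} lets me restrict the supremum to positive $X$ and take $A=B>0$ in formula (\ref{norm p>q}). The domain is the $S_\infty^d[S_1^n]$-norm, which by Remark \ref{(p,1)} equals $\|(id_d\otimes tr_n)(X)\|_\infty$, while the target $S_\infty^d[S_p^m]$-norm is computed by sandwiching with $A\otimes\uno_m$. The key manipulation is that $A$ acts only on the reference factor and so commutes with $id_d\otimes\Ne$, giving $(A\otimes\uno_m)(id_d\otimes\Ne)(X)(A\otimes\uno_m)=(id_d\otimes\Ne)(X')$ with $X'=(A\otimes\uno_n)X(A\otimes\uno_n)$. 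Absorbing $A$ into the state and then optimizing over it — using Weyl eigenvalue monotonicity for the Loewner order to get $\min_{B>0}\|B\|_p\,\|B^{-1/2}\sigma B^{-1/2}\|_\infty=\|\sigma\|_p$ — I expect to reach the clean formula
$$f(p)=\sup_{\rho}\frac{\|(id_d\otimes\Ne)(\rho)\|_p}{\|(id_d\otimes tr_n)(\rho)\|_p},$$
the supremum running over density matrices $\rho$ on $\C^d\otimes\C^n$.

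Next, writing $\sigma_\rho=(id_d\otimes tr_n)(\rho)$ and $\tau_\rho=(id_d\otimes\Ne)(\rho)$ (both states, so the ratio is $1$ at $p=1$ and $f(1)=1$ by trace preservation), I would differentiate at $p=1$. With $a=\|\tau_\rho\|_p$, $b=\|\sigma_\rho\|_p$ and the function $F$ of Lemma \ref{derivate-Entropy}, a short computation gives
$$\frac{1}{p-1}\Big(\frac{\|\tau_\rho\|_p}{\|\sigma_\rho\|_p}-1\Big)=\frac{F(\sigma_\rho,p)-F(\tau_\rho,p)}{\|\sigma_\rho\|_p}.$$
Since $\|\sigma_\rho\|_p\to1$ uniformly in $\rho$ and $F(\cdot,p)\to S(\cdot)$ uniformly over states (Lemma \ref{derivate-Entropy}), the right-hand side converges to $S(\sigma_\rho)-S(\tau_\rho)$ uniformly in $\rho$. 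Uniform convergence lets me interchange the supremum with the (one-sided) limit, yielding
$$S_d(\Ne)=f'(1)=\sup_{\rho}\big[S(\sigma_\rho)-S(\tau_\rho)\big].$$

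Finally I would restrict the supremum to pure states. By trace preservation $\sigma_\rho$ is exactly the reference marginal of $\tau_\rho$, so $S(\sigma_\rho)-S(\tau_\rho)=-S(B|A)_{\tau_\rho}$ is minus the conditional entropy of the output state $\tau_\rho$ on $\C^d\otimes\C^m$. Conditional entropy is concave in the state and $\rho\mapsto\tau_\rho$ is affine, hence $\rho\mapsto S(\sigma_\rho)-S(\tau_\rho)$ is \emph{convex} on the compact convex set of density matrices, so its supremum is attained at an extreme point, i.e. a pure state $\rho=|\psi\rangle\langle\psi|$. This gives $S_d(\Ne)=\sup_\psi[S(\sigma_\psi)-S(\tau_\psi)]$ over unit vectors $\psi\in\C^d\otimes\C^n$, as claimed. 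The main obstacle I anticipate is the first step: turning the two operator-space norms into the single scale-invariant ratio, since this is where the Loewner-order optimization over the sandwiching operator must be carried out and is the only genuinely computational part. The convexity step is conceptually delicate but rests on the standard concavity of conditional entropy.
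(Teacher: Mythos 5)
Your Steps 2 and 3 are essentially sound. The differentiation of the difference quotients via the uniform convergence in Lemma \ref{derivate-Entropy}, with the resulting interchange of limit and supremum, is exactly the mechanism the paper uses; and your passage from arbitrary states to pure states via concavity of conditional entropy (so that $\rho\mapsto S(\sigma_\rho)-S(\tau_\rho)$ is convex and maximized at extreme points) is correct and is a genuinely different route from the paper's, which instead restricts to pure states \emph{before} differentiating, using convexity of $x\mapsto\|(id_d\otimes \Ne)(x)\|_{S_1^d(S_p^m)}$ on the states in $S_1^{dn}$. The proposal breaks, however, exactly at the point you flagged: Step 1.

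The gap is the sentence ``Since $\Ne$, and hence $id_d\otimes\Ne$, is completely positive, Remark \ref{(p,1)} lets me restrict the supremum to positive $X$.'' Remark \ref{(p,1)} records positivity-attainment in two situations only: plain Schatten-to-Schatten norms $\|T:S_q\to S_p\|$ (Watrous, Audenaert), and amplified norms with \emph{matching} indices, $\|id\otimes T:S_q[S_q]\to S_q[S_p]\|$ (\cite{DJKR}). Your domain $M_d(S_1^n)=S_\infty^d[S_1^n]$ has outer index $\infty$ and inner index $1$, so neither result applies; the standard block-matrix Cauchy--Schwarz reduction behind those theorems uses that the norm is unchanged under $X\mapsto |X|$, which is not available for mixed vector-valued norms. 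Without this reduction, the inequality $\|\Ne:S_1^n\to S_p^m\|_d\le\sup_\rho\|(id_d\otimes\Ne)(\rho)\|_p\,/\,\|(id_d\otimes tr_n)(\rho)\|_p$ --- the only direction your upper bound on $S_d(\Ne)$ uses --- is unproved; the trivial direction (restricting a supremum to states can only decrease it) yields only the lower bound, which is also how the paper argues. The paper's proof shows what is actually required: it switches to the $t=1$ realization $\|id_d\otimes\Ne:S_1^{dn}\to S_1^d(S_p^m)\|$, whose domain is a plain trace norm so that the DJKR positivity result does apply (and then pure states suffice by extremality), and it still needs the King--Koldan inequality \cite{KiKo}, $\|x\|_{S_1^d(S_p^m)}\le \|((id_d\otimes tr_m)(x^p))^{1/p}\|_{S_1^d}$ for $x\ge 0$, to replace the vector-valued output norm by a differentiable scalar quantity. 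Your ratio formula is in fact true, and your sandwich/Loewner computation is the right mechanism to prove it, but it has to be run on the \emph{output} side of this $t=1$ formulation: for positive $y=(id_d\otimes\Ne)(\rho)$ the $S_1^d(S_p^m)$-norm is an infimum over factorizations $(A\otimes\uno_m)Y(A\otimes\uno_m)$, and the choice $A=\sigma_\rho^{1/(2p')}$, with $\sigma_\rho$ the reference marginal, produces exactly your ratio. As written, the crucial positivity reduction is asserted rather than proved, and it is the one step where the paper must deploy its heavier tools.
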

The quantity $S_d(\Ne)$ is a generalization of the cb-min entropy introduced in \cite{DJKR}. In particular, the quantity cb-min corresponds to $S_n(\Ne)$.
\begin{proof}
According to (\ref{CB norm with general $t$}) we have
\begin{align*}
\frac{d}{dp}\big\|\Ne:S_1^n\rightarrow S_p^m\big\|_d|_{p=1}&=\frac{d}{dp}\big\|id_d\otimes \Ne:S_p^d(S_1^n)\rightarrow S_p^{dm}\big\||_{p=1}\\&\geq\lim_{p\rightarrow 1}\sup_{\rho\in S_1^{dn}}\frac{1}{\big\|\rho\big\|_{S_p^d(S_1^n)}}\frac{\big\|(id_d\otimes \Ne)(\rho)\big\|_{S_p^{dm}}-\big\|\rho\big\|_{S_p^d(S_1^n)}}{p-1}\\&=\lim_{p\rightarrow 1}\sup_{\rho\in S_1^{dn}}\frac{1}{\big\|\rho\big\|_{S_p^d(S_1^n)}}\Big(\frac{\big\|(id_d\otimes \Ne)(\rho)\big\|_{S_p^{dm}}-1}{p-1}+\frac{1-\big\|\rho\big\|_{S_p^d(S_1^n)}}{p-1}\Big)\\&=\sup_{\rho\in S_1^{dn}} \Big\{S\big(id_d\otimes tr_n)(\rho)\big)-S\big(id_d\otimes \Ne)(\rho)\big)\Big\}.
\end{align*}Here, the first inequality is due to the fact that we are restricting the computation of the norm to states $\rho\in S_1^{dn}$ rather than to general matrices $\rho\in M_{dn}$. We have also used  that, by Lemma \ref{covariant single} and the fact that $\big\|\rho\big\|_{S_p^d[S_1^n]}=\big\|(id_d\otimes tr_n)(\rho)\big\|_{S_p^d}$ for positive elements (see Remark \ref{(p,1)}), we have that $$\lim_{p\rightarrow 1}\frac{\big\|(id_d\otimes \Ne)(\rho)\big\|_{S_p^{dn}}-1}{p-1}=-S\big((id_d\otimes \Ne)(\rho)\big)$$ and $$\lim_{p\rightarrow 1}\frac{1-\big\|\rho\big\|_{S_p^d(S_1^n)}}{p-1}=\lim_{p\rightarrow 1}\frac{1-\big\|(id_d\otimes tr_n)(\rho)\big\|_{S_p^d}}{p-1}=S\big((id_d\otimes tr_n)(\rho)\big)$$ uniformly. Therefore, we can iterate the limite and the supremum.

On the other hand, according to (\ref{CB norm with general $t$}) we also have
\begin{align*}
\big\|\Ne:S_1^n\rightarrow S_p^m\big\|_d=\big\|id_d\otimes \Ne:S_1^{dn}\rightarrow S_1^d(S_p^m)\big\|=\sup_{|\psi\rangle\in \C^{dn}}\big\|(id_d\otimes \Ne)(|\psi\rangle\langle \psi|)\big\|_{S_1^d(S_p^m)}.
\end{align*}Here, we have used that, since $\Ne$ is completely positive, we can compute its completely bounded norm by restricting to positive elements (Remark \ref{(p,1)}). Then, by normalizing we can restrict to states. Furthermore, since pure states are exactly the extreme points of the set of states, we have the last equality. Then,
\begin{align*}
\frac{d}{dp}\big\|\Ne:S_1^n\rightarrow S_p^n\big\|_d|_{p=1}&=\lim_{p\rightarrow 1}\sup_{|\psi\rangle\in \C^{dn}}\frac{\big\|(id_d\otimes \Ne)(|\psi\rangle\langle \psi|)\big\|_{S_1^d(S_p^m)}-1}{p-1}\\&\leq
\lim_{p\rightarrow 1}\sup_{|\psi\rangle\in \C^{dn}}\frac{tr_d\Big(\big(id_d\otimes tr_m\big)\big((id_d\otimes \Ne)(|\psi\rangle\langle \psi|)\big)^p\Big)^\frac{1}{p}-1}{p-1},
\end{align*}where here we have used that for ever positive element $x\in M_d\otimes M_m$ we have (see \cite{KiKo})
\begin{align*}
\|x\|_{S_1^d(S_p^m)}\leq \Big\|\big((id_d\otimes tr_m)(x^p)\big)^\frac{1}{p}\Big\|_{S_1^d}.
\end{align*}Let us call for a fixed $|\psi\rangle\in \C^{dn}$, $\rho_\psi=(id_d\otimes \Ne)(|\psi\rangle\langle \psi|)$ and note that
\begin{align*}
\frac{tr_d\Big((id_d\otimes tr_m)(\rho_\psi^p)\Big)^\frac{1}{p}-1}{p-1}=\frac{tr_d\big((id_d\otimes tr_m)(\rho_\psi^p)\big)^\frac{1}{p}-(tr_d\otimes tr_m)(\rho_\psi^p)}{p-1}+\frac{(tr_d\otimes tr_m)(\rho_\psi^p)-1}{p-1}\\=
\frac{tr_d\Big[\big((id_d\otimes tr_m)(\rho_\psi^p)\big)^\frac{1}{p}-(id_d\otimes tr_m)(\rho_\psi^p)\Big]}{p-1}+\frac{(tr_d\otimes tr_m)(\rho_\psi^p)-1}{p-1}\\\leq -tr_d\Big[(id_d\otimes tr_m)(\rho_\psi^p)\big)^\frac{1}{p} \ln \Big((id_d\otimes tr_m)(\rho_\psi^p)\big)^\frac{1}{p}\Big)\Big]+\frac{(tr_d\otimes tr_m)(\rho_\psi^p)-1}{p-1}.
\end{align*}Here we have used functional calculus and Remark 3.2 in \cite{JuPa2}. Now, it is not difficult to see that the function
\begin{align}\label{function G}
G(p,\rho)=-tr_d\Big[(id_d\otimes tr_m)(\rho^p)\big)^\frac{1}{p} \ln \Big(\big((id_d\otimes tr_m)(\rho^p)\big)^\frac{1}{p}\big)\Big)\Big]+\frac{(tr_d\otimes tr_n)(\rho^p)-1}{p-1}
\end{align}verifies that
\begin{align*}
\lim_{p\rightarrow 1}G(p,\rho)=S\big((id_d\otimes tr_m)(\rho))\big)- S(\rho)
\end{align*}and that this convergence is uniform in the states $\rho\in S_1^{dm}$. Indeed, the uniform convergence for the second term in (\ref{function G}) is a direct consequence of Lemma \ref{derivate-Entropy}. On the other hand, the uniform convergence of the first term in (\ref{function G}) can be easily obtained from Theorem \ref{cont-Entropy}.

Hence, we can finish our proof by using (\ref{function G}) and noting that
\begin{align*}
\frac{d}{dp}\|\Ne:S_1^n\rightarrow S_p^n\|_d|_{p=1}&\leq \lim_{p\rightarrow 1}\sup_{|\psi\rangle\in \C^{dn}}G(p,\rho_\psi)=\sup_{|\psi\rangle\in \C^{dn}}\Big\{S\Big((id_d\otimes tr_n)(\rho_\psi)\Big)- S(\rho_\psi)\Big\}\\&=
\sup_{|\psi\rangle\in \C^{dn}}\Big\{S\Big((id_d\otimes tr_n)(|\psi\rangle\langle \psi|)\Big)- S\Big((id_d\otimes \Ne)(|\psi\rangle\langle \psi|)\Big)\Big\}.
\end{align*}
\end{proof}
In this work, we are interested in dealing with quantum channels of the form
\begin{align}\label{channel direct sum}
\Ne:S_1^n\rightarrow S_1^{n_1}\oplus_1\cdots \oplus _1S_1^{n_m}
\end{align}
such that $$\Ne(\rho)=\mu_1\Ne_1(\rho)\oplus\cdots\oplus \mu_m\Ne_m(\rho),$$where $(\mu_j)_{j=1}^m$ is a probability distribution and $\Ne_j:S_1^n\rightarrow S_1^{n_j}$ is a quantum channel for every $j$.
\begin{definition}\label{Def:covariant}
Let $G$ be a compact group and let us consider unitary representations $\pi:G\rightarrow \mathbb U(n)$  and $\sigma_j:G\rightarrow \mathbb U(n_j)$ for every $j=1,\cdots, m$. We say that a quantum channel $\Ne$  of the form (\ref{channel direct sum}) is \emph{covariant} (with respect to ($G,\pi,\sigma_1,\cdots,\sigma_m$) if
\begin{enumerate}
\item[1.] $\int_G \sigma_j (g)^*\rho \sigma_j(g)dg=\frac{tr(\rho)}{n_j}\uno_{n_j}$ for every  $\rho\in S_1^{n_j}$ and for every $j$. Here, $ \mathbb U(n_j)$ represents the unitary group in dimension $n_j$ and the integral is with respect to the Haar measure  of $G$.
\item[2.]  $\Ne_j\big(\pi(g)^*\rho\pi(g)\big)=\sigma_j(g)^*\Ne_j(\rho)\sigma_j(g)$ for every $g\in G$ and every $\rho\in S_1^n$.
\end{enumerate}
\end{definition}
\begin{prop}\label{prop:d-restricted capacity direct sume}
Given a quantum channel $\Ne:S_1^n\rightarrow S_1^{n_1}\oplus_1\cdots \oplus_1 S_1^{n_m}$ as in (\ref{channel direct sum}), we have
\begin{align}\label{d-restricted capacity direct sume}
C^d_{prod}(\Ne)=\sup\sum_{j=1}^m\mu_j\Big\{S\Big(\sum_{i=1}^N \lambda_i \Ne_j\big((tr_d\otimes id_n)(\rho_i)\big)\Big)
\\ \nonumber+\sum_{i=1}^N \lambda_i\Big[S\Big((id_d\otimes tr_n)(\rho_i)\Big)-S\Big(\big(id_d\otimes \Ne_j\big)(\rho_i)\Big)\Big] \Big\}.
\end{align}Here, the supremum runs over all $N\in \N$, all probability distributions $(\lambda_i)_{i=1}^N$ and all families $(\rho_i)_{i=1}^N$, where $\rho_i\in S_1^d\otimes S_1^n$ is a state for every $i=1,\cdots, N$.
\end{prop}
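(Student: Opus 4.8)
The plan is to deduce this identity from the general single-letter description of the product-state capacity and then to isolate the direct-sum structure of $\Ne$ via additivity of the von Neumann entropy on block-diagonal operators. First I would record the general Holevo-type formula valid for an arbitrary channel $\mathcal M\colon S_1^n\to S_1^M$, namely
\[
C^d_{prod}(\mathcal M)=\sup\Big\{S\big(\sum_i\lambda_i\mathcal M((tr_d\otimes id_n)(\rho_i))\big)+\sum_i\lambda_i\big[S((id_d\otimes tr_n)(\rho_i))-S((id_d\otimes\mathcal M)(\rho_i))\big]\Big\},
\]
the supremum being over all ensembles $\{\lambda_i,\rho_i\}_{i=1}^N$ of states $\rho_i\in S_1^d\otimes S_1^n$. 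This is the product-state capacity of Theorem \ref{mainJP2} rewritten entropically: it follows from the same $p\to 1$ differentiation of the $\ell_q(S_q^d)$-summing norm carried out in the proof of Theorem \ref{theorem:p-norm pure states}, except that one keeps the full ensemble average coming from the $\ell_q$ (classical) part of the summing norm rather than restricting to a single pure input. As a consistency check, for a single pure input and a covariant channel the first term collapses to $\ln n$ and the rest to $S_d(\Ne)$, recovering (\ref{covariant single}).

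Next I would substitute $\Ne=\bigoplus_j\mu_j\Ne_j$. Since each $\Ne_j$ is trace preserving, for every state $\sigma$ the operator $(id_d\otimes\Ne)(\sigma)=\bigoplus_j\mu_j\,(id_d\otimes\Ne_j)(\sigma)$ is block-diagonal with $j$-th block equal to $\mu_j$ times the normalized state $(id_d\otimes\Ne_j)(\sigma)$, and likewise $\sum_i\lambda_i\Ne((tr_d\otimes id_n)(\rho_i))=\bigoplus_j\mu_j\big(\sum_i\lambda_i\Ne_j((tr_d\otimes id_n)(\rho_i))\big)$. The elementary fact I would use is the additivity
\[
S\Big(\bigoplus_j\mu_j\sigma_j\Big)=H(\mu)+\sum_j\mu_j S(\sigma_j)
\]
for normalized states $\sigma_j$ and a probability vector $(\mu_j)_j$, immediate from listing the eigenvalues of the block-diagonal operator. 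Applying it turns the first entropy term into $H(\mu)+\sum_j\mu_j S\big(\sum_i\lambda_i\Ne_j((tr_d\otimes id_n)(\rho_i))\big)$ and each term $S((id_d\otimes\Ne)(\rho_i))$ into $H(\mu)+\sum_j\mu_j S((id_d\otimes\Ne_j)(\rho_i))$, while the ancilla term $S((id_d\otimes tr_n)(\rho_i))$ is untouched by the output direct sum.

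Finally I would collect terms. The two occurrences of $H(\mu)$ cancel because $\sum_i\lambda_i=1$, and rewriting the surviving ancilla-entropy contribution $\sum_i\lambda_i S((id_d\otimes tr_n)(\rho_i))$ as $\sum_j\mu_j\sum_i\lambda_i S((id_d\otimes tr_n)(\rho_i))$ (using $\sum_j\mu_j=1$) lets me regroup everything under a single $\sum_j\mu_j\{\cdots\}$, which is precisely the claimed expression. Since the manipulation is an identity for each fixed ensemble and the supremum on both sides ranges over the same set of ensembles, the two suprema agree. The one genuinely delicate point is the first step: establishing the general Holevo formula for a \emph{non}-covariant channel, i.e. justifying the interchange of the $p\to 1$ limit with the supremum over ensembles and the conversion of the summing-norm derivative into the two entropy contributions. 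I expect to handle this exactly as in the proof of Theorem \ref{theorem:p-norm pure states}, using the uniform convergence provided by Lemma \ref{derivate-Entropy} together with the continuity estimate of Theorem \ref{cont-Entropy}; once that formula is in hand, the direct-sum reduction is pure entropy bookkeeping as above.
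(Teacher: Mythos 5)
Your proposal is correct, and its core is exactly the paper's own argument: the paper likewise combines a general ensemble (Holevo-type) formula for $C^d_{prod}$ with the additivity of the von Neumann entropy on block-diagonal operators, $S(\bigoplus_j \mu_j\sigma_j)=H(\mu)+\sum_j\mu_j S(\sigma_j)$, after which the two $H(\mu)$ contributions cancel (using $\sum_i\lambda_i=1$) and the surviving terms regroup under a single $\sum_j\mu_j\{\cdots\}$ precisely as you describe. The only place you diverge is the provenance of that general formula: the paper does not derive it at all, it simply cites it as \cite[Proposition 5.5]{JuPa2}, whereas you propose to extract it from Theorem \ref{mainJP2} by repeating the $p\to 1$ differentiation from the proof of Theorem \ref{theorem:p-norm pure states} while ``keeping the ensemble average''. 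Be aware that this is more than a routine modification: Theorem \ref{theorem:p-norm pure states} concerns the $d$-norm $\|\Ne:S_1^n\to S_p^m\|_d$, which encodes the single-state quantity $S_d(\Ne)$ and recovers $C^d_{prod}$ only for covariant channels via (\ref{covariant single}); for a general channel, converting the derivative of the $\ell_q(S_q^d)$-summing norm of $\Ne^*$ into the ensemble expression (with the classical $\ell_q$ index playing the role of the ensemble) is the substance of Section 5 of \cite{JuPa2}, and in that reference the logical order is in fact the reverse of what you suggest: the entropic ensemble expression is the definition-level object and the summing-norm identity of Theorem \ref{mainJP2} is derived from it. So the ``genuinely delicate point'' you flag is real, and your sketch of it would not go through as stated; the clean fix is simply to cite the ensemble formula, as the paper does, after which your block-diagonal entropy bookkeeping completes the proof verbatim.
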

\begin{proof}
According to \cite[Proposition 5.5]{JuPa2}, for a channel $\Ne:S_1^n\rightarrow S_1^m$, we have that
\begin{align}\label{d-restricted capacity}
C^d_{prod}(\Ne)=\sup\Big\{S\Big(\sum_{i=1}^N \lambda_i \Ne\big((tr_d\otimes id_n)(\rho_i)\big)\Big)
\\ \nonumber+\sum_{i=1}^N \lambda_i\Big[S\Big((id_d\otimes tr_n)(\rho_i)\Big)-S\Big(\big(id_d\otimes \Ne\big)(\rho_i)\Big)\Big] \Big\}.
\end{align}Here, the supremum runs over all $N\in \N$, all probability distributions $(\lambda_i)_{i=1}^N$, and all families $(\rho_i)_{i=1}^N$, where $\rho_i\in S_1^d\otimes S_1^n$ is a state for every $i=1,\cdots, N$.

However, it is very easy to check that if $\Ne:S_1^n\rightarrow S_1^{n_1}\oplus_1\cdots \oplus_1 S_1^{n_m}\subset S_1^{n_1+\cdots +n_m}$ is as in (\ref{channel direct sum}) we have
\begin{align*}
&S\Big(\sum_{i=1}^N \lambda_i \Ne\big((tr_d\otimes id_n)(\rho_i)\big)\Big)=H\big((\mu_j)_{j=1}^m\big)+\sum_{j=1}^m\mu_jS\Big(\sum_{i=1}^N \lambda_i \Ne_j\big((tr_d\otimes id_n)(\rho_i)\big)\Big),\\& S\Big(\big(id_d\otimes \Ne\big)(\rho_i)\Big)=H\big((\mu_j)_{j=1}^m\big)+\sum_{j=1}^m\mu_jS\Big(\big(id_d\otimes \Ne_j\big)(\rho_i)\Big).
\end{align*}Here, $H\big((\mu_j)_{j=1}^m\big)$ is the Shannon entropy of the probability distribution $(\mu_j)_{j=1}^m$ already introduced in Theorem \ref{Main Theorem Erasure}. Then, the result follows.
\end{proof}
Let us now define, for a channel $\Ne:S_1^n\rightarrow S_1^{n_1}\oplus_1\cdots \oplus_1 S_1^{n_m}$ as in (\ref{channel direct sum}), the quantity
\begin{align}\label{V_d}
V_d(\Ne)=\sup \Big\{\sum_{j=1}^m\mu_j\Big[S\Big((id_d\otimes tr_n)(|\psi\rangle\langle \psi|)\Big)-S\Big((id_d\otimes \Ne_j)(|\psi\rangle\langle \psi|)\Big)\Big]\Big\},
\end{align}where the supremum is taking over all pure states $|\psi\rangle\in \C^d\otimes \C^n$.
\begin{lemma}\label{lemma:V_d vs S_d}
Given a channel $\Ne:S_1^n\rightarrow S_1^{n_1}\oplus_1\cdots \oplus_1 S_1^{n_m}$ as in (\ref{channel direct sum}), we have
\begin{align*}
S_d(\Ne)=V_d(\Ne)- H\big((\mu_j)_{j=1}^n\big).
\end{align*}Furthermore,
\begin{align*}
V_d(\Ne)=\sup \Big\{\sum_{j=1}^m\mu_j\Big[S\big(id_d\otimes tr_n)(\rho)\big)-S\big(id_d\otimes \Ne_j)(\rho)\big)\Big]\Big\},
\end{align*}where the supremum is taking over all states $\rho\in S_1^d\otimes S_1^n$.
\end{lemma}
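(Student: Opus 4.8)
The plan is to treat the two assertions separately: the first is an entropy bookkeeping identity layered on top of Theorem \ref{theorem:p-norm pure states}, while the second is an extreme-point argument driven by convexity.

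For the first identity, I would start from Theorem \ref{theorem:p-norm pure states}, which writes $S_d(\Ne)$ as the supremum over unit vectors $|\psi\rangle\in\C^d\otimes\C^n$ of $S((id_d\otimes tr_n)(|\psi\rangle\langle\psi|))-S((id_d\otimes\Ne)(|\psi\rangle\langle\psi|))$. Since $\Ne(\rho)=\bigoplus_{j=1}^m\mu_j\Ne_j(\rho)$, the operator $(id_d\otimes\Ne)(|\psi\rangle\langle\psi|)$ is block diagonal with $j$-th block $\mu_j(id_d\otimes\Ne_j)(|\psi\rangle\langle\psi|)$, each $(id_d\otimes\Ne_j)(|\psi\rangle\langle\psi|)$ being a state because $\Ne_j$ is trace preserving. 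The block-diagonal entropy identity already used in the proof of Proposition \ref{prop:d-restricted capacity direct sume} then gives $S((id_d\otimes\Ne)(|\psi\rangle\langle\psi|))=H((\mu_j)_{j=1}^m)+\sum_{j=1}^m\mu_j S((id_d\otimes\Ne_j)(|\psi\rangle\langle\psi|))$. Substituting this, and using $\sum_j\mu_j=1$ to rewrite the lone term $S((id_d\otimes tr_n)(|\psi\rangle\langle\psi|))$ as $\sum_j\mu_j S((id_d\otimes tr_n)(|\psi\rangle\langle\psi|))$, the integrand defining $S_d(\Ne)$ equals the integrand defining $V_d(\Ne)$ minus the constant $H((\mu_j)_{j=1}^m)$ at every $|\psi\rangle$; taking suprema yields $S_d(\Ne)=V_d(\Ne)-H((\mu_j)_{j=1}^m)$.

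For the second identity, write $\Phi(\rho)=\sum_{j=1}^m\mu_j[S((id_d\otimes tr_n)(\rho))-S((id_d\otimes\Ne_j)(\rho))]$. Since pure states are states, the pure-state supremum defining $V_d(\Ne)$ is at most the supremum of $\Phi$ over all states, so the plan is to get the reverse inequality by showing that $\Phi$ is \emph{convex} on the compact convex set of states on $\C^d\otimes\C^n$, whence its maximum is attained at an extreme point, and the extreme points of this state space are exactly the pure states $|\psi\rangle\langle\psi|$. To check convexity, I would fix $j$, set $\sigma=(id_d\otimes\Ne_j)(\rho)$ (a state on $A=\C^d$ together with the output $B'$ of $\Ne_j$), and observe that because $\Ne_j$ is trace preserving and acts only on the input factor one has $\sigma_A=(id_d\otimes tr_n)(\rho)$. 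Hence the $j$-th summand equals $S(\sigma_A)-S(\sigma_{AB'})=-\big(S(\sigma_{AB'})-S(\sigma_A)\big)$, i.e. minus the conditional von Neumann entropy $S(B'|A)_\sigma$. Conditional entropy is concave in the state (the standard argument introduces a classical register $X$ recording a convex decomposition and uses $S(B'|AX)\le S(B'|A)$), so $-S(B'|A)_\sigma$ is convex in $\sigma$, and since $\sigma$ depends affinely on $\rho$ this summand is convex in $\rho$; a nonnegative combination over $j$ keeps $\Phi$ convex.

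The hard part is precisely this convexity. Term by term the objective is a sum of a concave piece $S((id_d\otimes tr_n)(\rho))$ and a convex piece $-S((id_d\otimes\Ne_j)(\rho))$, so estimating the two entropies separately gives nothing useful; the point is that $(id_d\otimes tr_n)(\rho)$ is exactly the $A$-marginal $\sigma_A$ of $\sigma=(id_d\otimes\Ne_j)(\rho)$, which fuses the two terms into a single conditional entropy and makes the sign work out. Once convexity is established, maximizing a convex function over the state space at its extreme points gives $\sup_\rho\Phi(\rho)=\sup_{|\psi\rangle}\Phi(|\psi\rangle\langle\psi|)=V_d(\Ne)$, which is the desired second identity.
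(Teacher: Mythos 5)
Your proof of the first identity coincides with the paper's: both start from Theorem \ref{theorem:p-norm pure states} and apply the block-diagonal entropy identity $S\big((id_d\otimes\Ne)(\rho)\big)=H\big((\mu_j)_{j=1}^m\big)+\sum_{j=1}^m\mu_j S\big((id_d\otimes\Ne_j)(\rho)\big)$, together with $\sum_j\mu_j=1$. For the second identity you take a genuinely different, and correct, route. The paper settles it in one line by recalling that inside the \emph{proof} of Theorem \ref{theorem:p-norm pure states} the lower bound for $S_d(\Ne)$ was obtained by a supremum over all states while the upper bound used only pure states, so the two suprema coincide; combined with the block-diagonal identity (valid for arbitrary states) this transfers to the $V_d$-functional. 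You instead prove the coincidence intrinsically: observing that trace preservation of $\Ne_j$ makes $(id_d\otimes tr_n)(\rho)$ the $A$-marginal of $\sigma=(id_d\otimes\Ne_j)(\rho)$, you rewrite the $j$-th summand as the negative conditional entropy $-S(B'|A)_\sigma$, invoke concavity of conditional entropy (equivalent to strong subadditivity, which the paper only uses later as Theorem \ref{SSI}) and affinity of $\rho\mapsto\sigma$ to conclude that the functional is convex on the state space, so its supremum is attained at extreme points, i.e.\ at pure states. Both arguments are sound; the trade-off is that the paper's is shorter but depends on the internal structure of the proof of Theorem \ref{theorem:p-norm pure states} (its statement alone, which mentions only pure states, would not suffice), whereas yours is self-contained modulo a standard entropy inequality and isolates the structural reason the pure-state restriction is harmless, namely convexity of the objective; this also makes your argument reusable for other functionals of the same conditional-entropy form.
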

\begin{proof}
According to Theorem \ref{theorem:p-norm pure states}, we have
\begin{align*}
S_d(\Ne)=\sup \Big\{S\Big((id_d\otimes tr_n)(|\psi\rangle\langle \psi|)\Big)-S\Big((id_d\otimes \Ne)(|\psi\rangle\langle \psi|)\Big)\Big\},
\end{align*}where the supremum is taking over all pure states $|\psi\rangle\in \C^d\otimes \C^n$. On the other hand, it is very easy to see that for every state (pure or not)
\begin{align*}
S\big((id_d\otimes \Ne)(\rho)\big)=H((\mu_j)_{j=1}^n)+\sum_{j=1}^m\mu_jS\big((id_d\otimes \Ne_j)(\rho)\big).
\end{align*}Therefore, the first statement follows.

The second part of the statement follows from the fact that the definition of $S_d(\Ne)$ doesn't change if we take the supremum over all states (see Theorem \ref{theorem:p-norm pure states}).
\end{proof}
In the following proposition we give a nice formula to compute $C^d_{prod}(\Ne)$ for covariant channels.
\begin{prop}\label{prop:C^d_{prod} for covariant direct sume}
Let $\Ne:S_1^n\rightarrow S_1^{n_1}\oplus_1\cdots \oplus_1 S_1^{n_m}$ be a quantum channel as in (\ref{channel direct sum}) which is covariant. Then,
\begin{align*}
C^d_{prod}(\Ne)=\sum_{j=1}^m\mu_j\ln n_j+ V_d(\Ne).
\end{align*}
\end{prop}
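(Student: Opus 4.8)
The plan is to prove the two inequalities separately, matching the variational formula for $C^d_{prod}(\Ne)$ from Proposition \ref{prop:d-restricted capacity direct sume} against the description of $V_d(\Ne)$ over arbitrary states supplied by Lemma \ref{lemma:V_d vs S_d}. The group structure of a covariant channel is used only for the lower bound.

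First I would dispatch the upper bound $C^d_{prod}(\Ne)\le\sum_j\mu_j\ln n_j+V_d(\Ne)$ directly from Proposition \ref{prop:d-restricted capacity direct sume}. For any admissible ensemble $(\lambda_i,\rho_i)_{i=1}^N$ the first term $S\big(\sum_i\lambda_i\Ne_j((tr_d\otimes id_n)(\rho_i))\big)$ is the entropy of a state on $\C^{n_j}$, hence at most $\ln n_j$, and averaging against $(\mu_j)$ gives $\sum_j\mu_j\ln n_j$. For the remaining contribution I would interchange the two finite sums to get $\sum_i\lambda_i\big(\sum_j\mu_j[S((id_d\otimes tr_n)(\rho_i))-S((id_d\otimes\Ne_j)(\rho_i))]\big)$; each inner bracket is at most $V_d(\Ne)$ by the ``all states'' formula of Lemma \ref{lemma:V_d vs S_d}, and since $\sum_i\lambda_i=1$ the whole term is bounded by $V_d(\Ne)$.

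The substance is the matching lower bound. I would fix a state $\rho$ on $\C^d\otimes\C^n$ that is (almost) optimal for $V_d(\Ne)$ and set $\rho^g:=(\uno_d\otimes\pi(g)^*)\rho(\uno_d\otimes\pi(g))$ for $g\in G$, then track each term of the functional under $\rho\mapsto\rho^g$. Invariance of the partial trace $tr_n$ under conjugation by $\pi(g)$ gives $(id_d\otimes tr_n)(\rho^g)=(id_d\otimes tr_n)(\rho)$, while covariance (property 2 of Definition \ref{Def:covariant}) tensored with $id_d$ yields $(id_d\otimes\Ne_j)(\rho^g)=(\uno_d\otimes\sigma_j(g)^*)(id_d\otimes\Ne_j)(\rho)(\uno_d\otimes\sigma_j(g))$, unitarily equivalent to $(id_d\otimes\Ne_j)(\rho)$ and hence of the same von Neumann entropy. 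Thus the bracketed entropy difference is constant in $g$ and equal to its value at $\rho$. For the first term, covariance applied to $(tr_d\otimes id_n)(\rho^g)=\pi(g)^*(tr_d\otimes id_n)(\rho)\pi(g)$ gives $\Ne_j((tr_d\otimes id_n)(\rho^g))=\sigma_j(g)^*\Ne_j((tr_d\otimes id_n)(\rho))\sigma_j(g)$; integrating over $G$ and invoking property 1 of Definition \ref{Def:covariant} (the argument has trace one since $\Ne_j$ is trace preserving) produces exactly the maximally mixed state $\frac1{n_j}\uno_{n_j}$, of entropy $\ln n_j$. Feeding the Haar-weighted family $\{\rho^g\}_{g\in G}$ into the functional therefore yields $\sum_j\mu_j\ln n_j+\sum_j\mu_j[S((id_d\otimes tr_n)(\rho))-S((id_d\otimes\Ne_j)(\rho))]$, and optimizing over $\rho$ recovers $V_d(\Ne)$ in the second summand.

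The one genuine technical point, which I expect to be the only real obstacle, is that Proposition \ref{prop:d-restricted capacity direct sume} uses finite ensembles while the construction above averages over the possibly infinite compact group $G$. I would resolve this by approximation: the entropy-difference bracket is affine in the ensemble and takes its $\rho$-value on every single $\rho^g$, so it is reproduced exactly by any finite sub-ensemble with weights summing to one; meanwhile $g\mapsto\Ne_j((tr_d\otimes id_n)(\rho^g))$ is norm-continuous on the compact $G$, so a suitable finite quadrature $\sum_i\lambda_i\,\Ne_j((tr_d\otimes id_n)(\rho^{g_i}))$ approximates $\frac1{n_j}\uno_{n_j}$ to within any $\varepsilon$ in trace norm, and the continuity estimate of Theorem \ref{cont-Entropy} forces the corresponding first term to exceed $\ln n_j-\delta(\varepsilon)$ with $\delta(\varepsilon)\to0$. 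Letting $\varepsilon\to0$ and then optimizing over $\rho$ gives $C^d_{prod}(\Ne)\ge\sum_j\mu_j\ln n_j+V_d(\Ne)$, which combined with the upper bound closes the proof.
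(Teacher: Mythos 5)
Your proof is correct and follows essentially the same route as the paper: the upper bound via the finite-ensemble formula of Proposition \ref{prop:d-restricted capacity direct sume}, the bound $S(\cdot)\leq\ln n_j$, convexity, and the ``all states'' form of $V_d$ from Lemma \ref{lemma:V_d vs S_d}; and the lower bound by feeding the Haar-averaged orbit $\rho_g=(\uno_d\otimes\pi(g)^*)\rho(\uno_d\otimes\pi(g))$ into the same formula and exploiting the two covariance properties of Definition \ref{Def:covariant}. The only difference is that you carefully justify the passage from the continuous group ensemble to finite ensembles (via constancy of the entropy-difference bracket, a finite quadrature of the Haar integral, and the continuity estimate of Theorem \ref{cont-Entropy}), a point the paper dispatches in a footnote by appealing to ``approximation''.
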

\begin{proof}
Since $S\Big(\sum_{i=1}^N \lambda_i \Ne_j\big((tr_d\otimes id_n)(\rho_i)\big)\Big)\leq \ln n_j$ for every $N\in \N$, all probability distributions $(\lambda_i)_{i=1}^N$, and all families $(\rho_i)_{i=1}^N$ of states $\rho_i\in S_1^d\otimes S_1^n$, Proposition \ref{prop:d-restricted capacity direct sume} guarantees that
\begin{align*}
C^d_{prod}(\Ne)&\leq  \sum_{j=1}^m\mu_j \ln n_j+ \sup \Big\{\sum_{j=1}^m\mu_j\sum_{i=1}^N \lambda_i\Big[S\Big((id_d\otimes tr_n)(\rho_i)\Big)-S\Big(\big(id_d\otimes \Ne_j\big)(\rho_i)\Big)\Big] \Big\}\\&=
\sum_{j=1}^m\mu_j \ln n_j+ \sup \Big\{\sum_{i=1}^N \lambda_i\Big[S\Big((id_d\otimes tr_n)(\rho_i)\Big)-\sum_{j=1}^m\mu_jS\Big(\big(id_d\otimes \Ne_j\big)(\rho_i)\Big)\Big] \Big\},
\end{align*}where the supremum runs over all $N\in \N$, all probability distributions $(\lambda_i)_{i=1}^N$, and all families $(\rho_i)_{i=1}^N$, of states $\rho_i\in S_1^d\otimes S_1^n$. Now, by convexity it is clear that this is the same as
\begin{align*}
C^d_{prod}(\Ne)&\leq  \sum_{j=1}^m\mu_j \ln n_j+ \sup \Big\{S\Big((id_d\otimes tr_n)(\rho)\Big)-\sum_{j=1}^m\mu_jS\Big(\big(id_d\otimes \Ne_j\big)(\rho)\Big) \Big\}\\&=
\sum_{j=1}^m\mu_j \ln n_j+ \sup \Big\{\sum_{j=1}^m\mu_j\Big[S\Big((id_d\otimes tr_n)(\rho)\Big)-S\Big(\big(id_d\otimes \Ne_j\big)(\rho)\Big) \Big]\Big\},\\
\end{align*}where the supremum runs over all states $\rho\in S_1^d\otimes S_1^n$. Then, we conclude that
\begin{align*}
C^d_{prod}(\Ne)\leq  \sum_{j=1}^m\mu_j \ln n_j+ V_d(\Ne).
\end{align*}
Let us now consider a general state $\rho\in S_1^d\otimes S_1^n$ (in particular, any pure state). For every $g\in G$ we denote $\rho_g:=\big(id_d\otimes \pi(g)^*\big)\rho \big(id_d\otimes \pi(g)\big)$ and we consider the ensemble $\{dg, (\rho_g)_g\}$\footnote{Although we usually consider finite ensembles $\{(\lambda_i)_{i=1}^N, (\rho_i)_{i=1}^N\}$ one can also work with infinite ones and obtain the corresponding result by approximation.}. Then, according to Proposition \ref{prop:d-restricted capacity direct sume} we have
\begin{align*}
C^d_{prod}(\Ne)\geq \sum_{j=1}^m\mu_j\Big\{S\Big(\int_G \Ne_j\big((tr_d\otimes id_n)(\rho_g)\big)dg\Big)
\\ \nonumber+\int_G \Big[S\Big((id_d\otimes tr_n)(\rho_g)\Big)-S\Big(\big(id_d\otimes \Ne_j\big)(\rho_g)\Big)\Big]dg \Big\}.
\end{align*}
Now, for every $j$ we have that
\begin{align}\label{equation 1}
S\Big(\int_G \Ne_j\big((tr_d\otimes id_n)(\rho_g)\big)dg\Big)&=S\Big(\int_G \Ne_j\Big(\pi(g)^*\big((tr_d\otimes id_n)(\rho)\big)\pi(g)\Big)dg\Big)\\&\nonumber =S\Big(\int_G \sigma_j(g)^*\Ne_j\big((tr_d\otimes id_n)(\rho)\big)\sigma_j(g)dg\Big)\\&\nonumber =
S\Big(\frac{\uno_{n_j}}{n_j}\Big)=\ln n_j,
\end{align}where in the second equality we have used the covariant properties of our channel.

On the other hand, for every $j$ we also have
\begin{align}\label{equation 2}
\int_G S\Big((id_d\otimes tr_n)(\rho_g)\Big)dg&=\int_G S\Big((id_d\otimes tr_n)\big(\big(id_d\otimes \pi(g)^*\big)\rho \big(id_d\otimes \pi(g)\big)\big)\Big)dg\\&\nonumber=
\int_G S\big((id_d\otimes tr_n)(\rho)\big)dg=S\big((id_d\otimes tr_n)(\rho)\big),
\end{align}and
\begin{align}\label{equation 3}
S\Big(\big(id_d\otimes \Ne_j\big)(\rho_g)\Big)&=S\Big(\big(id_d\otimes \Ne_j\big)\Big(\big(id_d\otimes \pi(g)^*\big)\rho \big(id_d\otimes \pi(g)\big)\Big)\Big)\\&\nonumber=
S\Big(\big(\uno_d\otimes \sigma_j(g)^*\big)\big(id_d\otimes \Ne_j\big)(\rho)\big(\uno_d\otimes  \sigma_j(g)\big)\Big)\\&\nonumber=S\Big(\big(id_d\otimes \Ne_j\big)(\rho)\Big).
\end{align}Here in the last equality we have used that the von Neumann entropy is invariant under unitaries. Equations (\ref{equation 1}), (\ref{equation 2}) and (\ref{equation 3})  imply that
\begin{align*}
C^d_{prod}(\Ne)\geq \sum_{j=1}^m\mu_j\ln n_j+ \sum_{j=1}^m\mu_j\Big[S\Big((id_d\otimes tr_n)(\rho)\Big)-S\Big(\big(id_d\otimes \Ne_j\big)(\rho)\Big)\Big].
\end{align*}Since this happens for every state $\rho\in S_1^d\otimes S_1^n$, we conclude that
\begin{align*}
C^d_{prod}(\Ne)\geq  \sum_{j=1}^m\mu_j \ln n_j+ V_d(\Ne).
\end{align*}
\end{proof}
\section{$d$-restricted capacity of the quantum depolarizing channel}\label{Sec: Depolarizing channel}
In this section we will prove the part of Theorem \ref{Main theorem Depolarizing} corresponding to the depolarizing channel (Equation (\ref{p-norm depol})) and also Corollary \ref{Corollary Depolarizing}. Finally, we will see how to obtain the first part of Theorem \ref{Main Theorem Erasure} (Equation (\ref{sharp bounds dep})) by assuming (\ref{Eq multiplicativity erasure}), which will be proved in the next section.

It is very easy to see that $\mathcal \mathcal D_\lambda$ is a covariant channel with respect to $(\mathbb U(n), id_{\mathbb U(n)}, id_{\mathbb U(n)})$.
%
Therefore, according to Proposition \ref{prop:C^d_{prod} for covariant direct sume} and Lemma \ref{lemma:V_d vs S_d}, the expression for $C_{prod}^d(\mathcal \mathcal D_\lambda)$ in Theorem \ref{Main theorem Depolarizing} can be obtained from Equation (\ref{p-norm depol}) by differentiation (and adding a $\ln n$ term). Indeed, if we differentiate in Equation (\ref{p-norm depol}) we obtain
$$\begin{array}{l}
\frac{d}{dp}\big\|\mathcal \mathcal \mathcal D_\lambda:S_1^n\rightarrow S_p^n\big\|_d|_{p=1}=
(\lambda +\frac{1-\lambda}{nd})\ln (\lambda +\frac{1-\lambda}{nd})+(nd-1)(\frac{1-\lambda}{nd})\ln (\frac{1-\lambda}{nd})+\ln d.
\end{array}$$Adding a $\ln n$ term we obtain desired equation\footnote{Recall that, according to Remark \ref{log 2 vs lgn}, we must replace our $\ln$-terms by $\log_2$-terms in order to consider the right capacity.}.

In order to prove (\ref{p-norm depol}) we will start by defining the following family of linear maps\footnote{It is very easy to see that $\theta_\lambda^{d,1}(\rho)$ is a quantum channel. However, we will consider the whole family $\big(\theta_\lambda^{d,p}(\rho)\big)_p$ in order to compute the $(1,p)$-norm of our channel.}:
$\theta_\lambda^{d,p}:S_1^d\rightarrow S_1^d\oplus_1S_1^d\subseteq S_1^{2d}$ for every $p\geq 1$, define by
\begin{align}\label{mod channel dep}
\theta_\lambda^{d,p}(\rho)=\Big(\lambda\rho+ \frac{1-\lambda}{n}tr(\rho)\uno_d\Big)\oplus  \frac{1-\lambda}{n}tr(\rho)\big(\frac{n-d}{d}\big)^\frac{1}{p}\uno_d
\end{align}for every $\rho\in S_1^d$.
\begin{prop}\label{prop upper bound by new channel}
Let $\mathcal \mathcal D_\lambda:S_1^n\rightarrow S_1^n$ be the quantum depolarizing channel with parameter $\lambda$ and $\theta_\lambda^{d,p}$ defined as above. Then,
\begin{align*}
\big\|\mathcal \mathcal D_\lambda:S_1^n\rightarrow S_p^n\big\|_d\leq \big\|\theta_\lambda^{d,p}:S_1^d\rightarrow S_p^d\oplus_pS_p^d\big\|_{cb}.
\end{align*}
\end{prop}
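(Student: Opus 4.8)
The plan is to reduce both the $d$-norm and the cb-norm to suprema over pure states, and then, for each pure input, to transport the data of $\theta_\lambda^{d,p}$ onto that of $\mathcal D_\lambda$ by a single completely positive complete contraction. First, by the pure-state formula established inside the proof of Theorem~\ref{theorem:p-norm pure states},
\[
\big\|\mathcal D_\lambda:S_1^n\to S_p^n\big\|_d=\sup_{|\psi\rangle\in\C^{dn}}\big\|(id_d\otimes\mathcal D_\lambda)(|\psi\rangle\langle\psi|)\big\|_{S_1^d(S_p^n)},
\]
and the same reduction (completely positive maps are tested on positive elements, which may be normalized to states, and the ancilla may be compressed to the support of the reduced state, of rank at most $d$) gives
\[
\big\|\theta_\lambda^{d,p}:S_1^d\to S_p^d\oplus_p S_p^d\big\|_{cb}=\sup_{|\phi\rangle\in\C^{d^2}}\big\|(id_d\otimes\theta_\lambda^{d,p})(|\phi\rangle\langle\phi|)\big\|_{S_1^d(S_p^d\oplus_p S_p^d)}.
\]

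Next I would fix a unit vector $|\psi\rangle\in\C^d\otimes\C^n$. Its Schmidt rank is at most $d$, so there is an isometry $W:\C^d\to\C^n$ with $W^*W=\uno_d$ and $WW^*=P_V$ the projection onto the $d$-dimensional range $V$, together with a unit vector $|\phi\rangle\in\C^d\otimes\C^d$ such that $|\psi\rangle=(\uno_d\otimes W)|\phi\rangle$. Writing $\tau=(id_d\otimes tr_n)(|\psi\rangle\langle\psi|)=(id_d\otimes tr_d)(|\phi\rangle\langle\phi|)$ and expanding the channels on the second tensor leg yields
\[
(id_d\otimes\mathcal D_\lambda)(|\psi\rangle\langle\psi|)=\lambda|\psi\rangle\langle\psi|+\tfrac{1-\lambda}{n}\,\tau\otimes\uno_n,
\]
\[
(id_d\otimes\theta_\lambda^{d,p})(|\phi\rangle\langle\phi|)=\Big(\lambda|\phi\rangle\langle\phi|+\tfrac{1-\lambda}{n}\tau\otimes\uno_d\Big)\oplus\Big(\tfrac{1-\lambda}{n}\big(\tfrac{n-d}{d}\big)^{\frac1p}\tau\otimes\uno_d\Big)=:C\oplus D.
\]
Splitting $\uno_n=P_V+(\uno_n-P_V)$ and conjugating the $V$-part by $W$ shows that the left-hand output equals $A+B$ with $A=(\uno_d\otimes W)\,C\,(\uno_d\otimes W^*)$, supported on $\C^d\otimes V$, and $B=\tfrac{1-\lambda}{n}\tau\otimes(\uno_n-P_V)$, supported on $\C^d\otimes V^\perp$ with $\dim V^\perp=n-d$.

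Since $A$ and $B$ are block-diagonal for $\C^n=V\oplus V^\perp$ in the inner leg, and the block inclusion $S_p^d\oplus_p S_p^{n-d}\hookrightarrow S_p^n$ is a complete isometry (preserved by the functor $S_1^d[\,\cdot\,]=S_1^d\hat{\otimes}\,\cdot$), while conjugation by the isometry $\uno_d\otimes W$ identifies $A$ with $C$ completely isometrically, I obtain
\[
\big\|(id_d\otimes\mathcal D_\lambda)(|\psi\rangle\langle\psi|)\big\|_{S_1^d(S_p^n)}=\big\|C\oplus B'\big\|_{S_1^d(S_p^d\oplus_p S_p^{n-d})},\qquad B'=\tfrac{1-\lambda}{n}\tau\otimes\uno_{n-d}.
\]
To compare this with $\|C\oplus D\|$, consider the completely positive map $v:S_p^d\to S_p^{n-d}$, $v(x)=\big(\tfrac{n-d}{d}\big)^{\frac1{p'}}\tfrac{tr(x)}{n-d}\uno_{n-d}$ with $\tfrac1p+\tfrac1{p'}=1$; the Hölder bound $|tr(x)|\le d^{1/p'}\|x\|_p$ gives $\|v:S_p^d\to S_p^{n-d}\|\le1$, hence $\|v\|_{cb}\le1$ by complete positivity (Remark~\ref{(p,1)}), and one checks $v(\uno_d)=\big(\tfrac{n-d}{d}\big)^{1/p'}\tfrac{d}{n-d}\uno_{n-d}$, so that $(id_{S_1^d}\otimes v)(D)=B'$. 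Thus $id_{S_1^d}\otimes(id_{S_p^d}\oplus v)$ is a complete contraction of $S_1^d(S_p^d\oplus_p S_p^d)$ into $S_1^d(S_p^d\oplus_p S_p^{n-d})$ sending $C\oplus D$ to $C\oplus B'$, whence $\|C\oplus B'\|\le\|C\oplus D\|$. Taking suprema over $|\psi\rangle$ (equivalently over $|\phi\rangle$ and $W$) then yields the stated inequality.

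The main obstacle is not the algebra but making sure every identification is completely isometric/contractive rather than merely isometric: that the block-diagonal splitting and the assembly $id_{S_p^d}\oplus v$ descend to the vector-valued spaces $S_1^d[\,\cdot\,]$. I would justify this via functoriality of $S_1^d[\,\cdot\,]=S_1^d\hat{\otimes}\,\cdot$ together with (\ref{mmp projective norm}) and the fact (Remark~\ref{(p,1)}) that for completely positive maps between Schatten classes the completely bounded norm is already controlled by the action on positive elements. It is worth emphasizing that the factor $\big(\frac{n-d}{d}\big)^{1/p}$ in the definition of $\theta_\lambda^{d,p}$ is designed precisely so that the inner $S_p$-weights of $B'$ and $D$ coincide, which is exactly what lets $v$ be a contraction.
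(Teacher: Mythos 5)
Your argument is essentially the paper's own proof. The reduction to pure states, the confinement of the input to a $d$-dimensional subspace of $\C^n$ (the paper does this by the unitary covariance of $\mathcal D_\lambda$, you by factoring $|\psi\rangle=(\uno_d\otimes W)|\phi\rangle$ through an isometry --- the same device), the block decomposition of $(id_d\otimes\mathcal D_\lambda)(|\psi\rangle\langle\psi|)$ along $V\oplus V^\perp$, and the absorption of the $V^\perp$ block into the second summand of $\theta_\lambda^{d,p}$ are exactly the paper's steps; indeed your map $v$ coincides, after simplifying exponents, with the map $V$ of Lemma \ref{lemma V}, and the weight $\big(\frac{n-d}{d}\big)^{1/p}$ plays the same role in both.

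The one step whose justification does not hold up is the bound $\|v\|_{cb}\le 1$. You deduce it from $\|v\|\le 1$ ``by complete positivity (Remark \ref{(p,1)})''. That inference is invalid: complete positivity does not make the completely bounded norm between Schatten classes collapse to the ordinary norm. For instance, $id:S_1^n\to M_n$ is completely positive with $\|id\|=1$ but $\|id\|_{cb}=n$; and Remark \ref{(p,1)} only says that (cb-)norms of completely positive maps may be computed on positive elements, not that the cb-norm equals the norm. The correct reason --- and the one the paper uses in Lemma \ref{lemma V} --- is that $v$ has rank one: $v=\varphi(\cdot)\,\uno_{n-d}$ with $\varphi$ a scalar functional, and since functionals have cb-norm equal to their norm, $\|v\|_{cb}=\|v\|\le 1$. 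With this repair (together with the second assertion of Lemma \ref{lemma V}, namely $\|id\oplus v\|_{cb}\le 1$ for the $\oplus_p$-sum, which you also need and assert), your proof is complete and coincides with the paper's.
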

Before proving the proposition, we will show the following easy lemma.
\begin{lemma}\label{lemma V}
Given $1\leq d\leq n$, let us define the linear map $V:S_p^{d}\rightarrow S_p^{n-d}$ by
\begin{align*}
V(\rho)=\frac{tr(\rho)}{(n-d)^\frac{1}{p}d^\frac{1}{p'}}\uno_{n-d}, \text{     }\text{      }\rho\in S_p^d
\end{align*}Then, $\|V\|_{cb}=1$. Moreover,
\begin{align*}
\big\|id\oplus V:S_p\oplus_p S_p^{d} \rightarrow S_p\oplus_p S_p^{n-d}\big\|_{cb}=1.
\end{align*}
\end{lemma}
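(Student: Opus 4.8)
The plan is to prove the two claims separately, reducing both to maps into and out of the one-dimensional operator space $\C$, where the completely bounded norm and the ordinary norm coincide.

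First I would establish $\|V\|_{cb}=1$ by factoring $V$ through $\C$. Write
\begin{align*}
V=\frac{1}{(n-d)^{\frac1p}\,d^{\frac1{p'}}}\,\iota\circ \tr,
\end{align*}
where $\tr\colon S_p^d\to \C$ is the trace functional and $\iota\colon \C\to S_p^{n-d}$ is the embedding $c\mapsto c\,\uno_{n-d}$. Since $CB(S_p^d,\C)=(S_p^d)^*$ and $CB(\C,S_p^{n-d})=S_p^{n-d}$ isometrically (the cb norm of a functional, respectively of a map out of $\C$, equals its ordinary norm), one computes from the duality $(S_p^d)^*=S_{p'}^d$ recorded in Section \ref{Sec: Basic notions} that $\|\tr\|_{cb}=\|\uno_d\|_{S_{p'}^d}=d^{\frac1{p'}}$ and $\|\iota\|_{cb}=\|\uno_{n-d}\|_{S_p^{n-d}}=(n-d)^{\frac1p}$. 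By submultiplicativity of the cb norm under composition, the normalizing constant is exactly the reciprocal of $\|\iota\|_{cb}\|\tr\|_{cb}$, so $\|V\|_{cb}\le 1$. For the reverse inequality I would test $V$ on $\uno_d\in S_p^d$: since $\tr(\uno_d)=d$ we get $\|V(\uno_d)\|_{S_p^{n-d}}=\frac{d}{(n-d)^{1/p}d^{1/p'}}(n-d)^{\frac1p}=d^{\frac1p}=\|\uno_d\|_{S_p^d}$, whence $\|V\|\ge 1$, and therefore $\|V\|_{cb}=1$. (Alternatively, as $V$ is completely positive one could invoke Remark \ref{(p,1)} to restrict the computation to positive elements, but the factorization yields the constants directly.)

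For the ``moreover'' part I would use the principle that a \emph{diagonal} map between $p$-direct sums has cb norm at most the maximum of the cb norms of its components. This is immediate for $p=\infty$, since $M_k[E_1\oplus_\infty E_2]=M_k[E_1]\oplus_\infty M_k[E_2]$ forces $\|T_1\oplus T_2\|_{cb}=\max(\|T_1\|_{cb},\|T_2\|_{cb})$; it follows for $p=1$ by duality, using $(E_1\oplus_1 E_2)^*=E_1^*\oplus_\infty E_2^*$, $(T_1\oplus T_2)^*=T_1^*\oplus T_2^*$ and $\|T\|_{cb}=\|T^*\|_{cb}$. Since $E_1\oplus_p E_2=(E_1\oplus_\infty E_2,E_1\oplus_1 E_2)_{\frac1p}$, interpolation via (\ref{interpolation step I}) gives
\begin{align*}
\big\|id\oplus V\big\|_{CB(S_p\oplus_p S_p^d,\, S_p\oplus_p S_p^{n-d})}\le \max\big(\|id\|_{cb},\|V\|_{cb}\big)^{1-\frac1p}\,\max\big(\|id\|_{cb},\|V\|_{cb}\big)^{\frac1p}=1,
\end{align*}
where I used $\|id\|_{cb}=1$ together with the already proven $\|V\|_{cb}=1$. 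The lower bound is immediate by restricting $id\oplus V$ to the first summand, on which it acts as the identity, so the cb norm equals $1$.

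The only points requiring care are the bookkeeping of the constants $(n-d)^{1/p}$ and $d^{1/p'}$ in the factorization, and the justification that the diagonal map is controlled by the maximum of its components. Neither is a genuine obstacle: the first is an exact evaluation of two scalar norms, and the second is the standard behaviour of $\oplus_\infty$ and $\oplus_1$ direct sums combined with the interpolation inequality (\ref{interpolation step I}) already available in Section \ref{Sec: Basic notions}. This is why the statement is an easy lemma.
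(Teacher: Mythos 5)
Your proposal is correct and is essentially the paper's own argument in expanded form: the paper's one-line observation that a rank-one map satisfies $\|V\|_{cb}=\|V\|$ is precisely your factorization through $\C$, and the norm computation (the trace functional has norm $d^{1/p'}$ on $S_p^d$ by H\"older, while $\|\uno_{n-d}\|_{S_p^{n-d}}=(n-d)^{1/p}$) is identical. For the direct-sum statement the paper only says it ``follows straightforwardly from the first one''; your $\oplus_\infty$/$\oplus_1$ argument combined with duality and the interpolation inequality (\ref{interpolation step I}) is the standard justification of that claim, so you have simply supplied the details the paper leaves implicit.
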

\begin{proof}
Since $V$ has rank one, we know that $\|V\|_{cb}=\|V\|$. Let us then consider an element $\rho$ in the unit ball of $S_p^{d}$. We have that
\begin{align*}
\|V(\rho)\|_{S_p^{n-d}}=\frac{|tr(\rho)|}{(n-d)^\frac{1}{p}d^\frac{1}{p'}}\|\uno_{n-d}\|_{S_p^{n-d}}\leq \frac{d^{\frac{1}{p'}}}{(n-d)^\frac{1}{p}d^\frac{1}{p'}}(n-d)^\frac{1}{p}=1.
\end{align*}The second statement follows straightforward from the first one.
\end{proof}
We prove now Proposition \ref{prop upper bound by new channel}.
\begin{proof}
According to (\ref{CB norm with general $t$}), it suffices to show that
\begin{align*}
\Big\|id_d\otimes \mathcal \mathcal D_\lambda:S_1^d(S_1^n)\rightarrow S_1^d(S_p^n)\Big\|\leq \Big \|id_d\otimes \theta_\lambda^{d,p}:S_1^d(S_1^d)\rightarrow S_1^d(S_p^d\oplus_pS_p^d)\Big\|.
\end{align*}In fact, since $\mathcal D_\lambda$ is completely positive we can restrict the computation of the first norm to positive elements (see Remark \ref{(p,1)}) so, by normalization, to states $\rho\in S_1^{dn}$. Moreover, since pure states are exactly the extreme points of general states, by convexity we can restrict to pure states $\xi=|\eta\rangle\langle\eta|\in S_1^{dn}$, where $\ba\eta\rangle$ is a unit vector in $\C^{dn}$. Now, according to the Hilbert-Schmidt decomposition we can assume that $\ba\eta\rangle=\sum_{i=1}^d\lambda_i |f_i\rangle\otimes |g_i\rangle$ for certain orthonormal systems $(|f_i\rangle)_i\subset \C^d$, $(|g_i\rangle)_i\subset \C^n$  respectively and  $\sum_{i=1}^d|\lambda_i|^2=1$. Moreover, by the unitary invariance of our channel $\mathcal \mathcal D_\lambda$ we can assume that $\ba\eta\rangle=\sum_{i=1}^d\lambda_ie_i\otimes e_i\in \C^d\otimes \C^d\subset \C^d\otimes \C^n$. Indeed, this is because we have
\begin{align*}
\big\|(id_d\otimes \mathcal \mathcal D_\lambda)(\xi)\big\|_{S_1^d(S_p^n)}&=\big\|(U\otimes V)\big((id_d\otimes \mathcal \mathcal D_\lambda)(\xi)\big)(U^*\otimes V^*)\big\|_{S_1^d(S_p^n)}\\&=
\big\|(id_d\otimes \mathcal \mathcal D_\lambda)\big((U\otimes V)\xi(U^*\otimes V^*)\big)\big\|_{S_1^d(S_p^n)}
\end{align*}for every $\xi$ and all unitaries $U\in M_d$ and $V\in M_n$. Therefore, $\xi=\sum_{i,j=1}^d\lambda_i\overline{\lambda_j}|i\rangle\langle j|\otimes |i\rangle\langle j|\in S_1^d\otimes S_1^n$. It is trivial to check that
\begin{align*}
(id_d\otimes \mathcal \mathcal D_\lambda)(\xi)=\lambda\xi+\frac{1-\lambda}{n}\sum_{i=1}^d|\lambda_i|^2|i\rangle\langle i|\otimes \uno_n.
\end{align*}Now, we can see that $\uno_n=\uno_d\oplus \uno_{n-d}$ and since $\xi\in S_1^d\otimes S_1^d\subset S_1^d\otimes S_1^n$, we have
\begin{align*}
(id_d\otimes \mathcal \mathcal D_\lambda)(\xi)=\Big(\lambda\xi+\frac{1-\lambda}{n}\sum_{i=1}^d|\lambda_i|^2|i\rangle\langle i|\otimes \uno_d\Big)\oplus \Big(\frac{1-\lambda}{n}\sum_{i=1}^d|\lambda_i|^2|i\rangle\langle i|\otimes \uno_{n-d}\Big).
\end{align*}
Let us now consider
\begin{align*}
(\uno_d\otimes \theta_\lambda^{d,p}(\xi))=\Big(\lambda\xi+\frac{1-\lambda}{n}\sum_{i=1}^d|\lambda_i|^2|i\rangle\langle i|\otimes \uno_d\Big)\oplus \Big(\frac{1-\lambda}{n}\big(\frac{n-d}{d}\big)^\frac{1}{p}\sum_{i=1}^d|\lambda_i|^2|i\rangle\langle i|\otimes \uno_d\Big).
\end{align*}
Since $\uno_{n-d}=V(\big(\frac{n-d}{d}\big)^\frac{1}{p} \uno_d)$, the result follows from Lemma \ref{lemma V}.
\end{proof}
In order to find an upper bound for the quantity $\big\|\theta_\lambda^{d,p}:S_1^d\rightarrow S_p^d\oplus_pS_p^d\big\|_{cb}$ we will use Theorem \ref{Theorem I: teleportation-general}. In the particular case we need, the theorem states that the map
\begin{align*}
j_p(\rho)=\frac{1}{d^\frac{1}{p}}\sum_{k,l=1}^d T_{k,l}\rho T_{k,l}^* \otimes e_{k,l}
\end{align*}defines a complete isometry of $S_p^d$ in $M_d(\ell_p^{d^2})$, which is complemented by a completely contractive and completely positive map. Moreover,
\begin{align*}
\tilde{J}_p(\rho_1\oplus \rho_2)=\frac{1}{d^\frac{1}{p}}\Big(\sum_{k,l=1}^d T_{k,l}\rho_1 T_{k,l}^* \otimes e_{k,l;1}\oplus \sum_{k,l=1}^d T_{k,l}\rho_2 T_{k,l}^*\otimes e_{k,l;2}\Big)
\end{align*}defines a complete isometry of $S_p^d\oplus_p S_p^d$ in $M_d(\ell_p^{d^2}\oplus_p \ell_p^{d^2})$ which is complemented by a completely contractive and completely positive map. Here, we denote by $e_{k,l;1}$ the elements of the canonical basis of the first $\ell_p^{d^2}$ space and by $e_{k,l;2}$ the elements of canonical basis of the second $\ell_p^{d^2}$ space.
\begin{lemma}\label{lemma norm commutative}
Let us consider the linear map $\Psi_{\alpha,\beta,\gamma}:\ell_1^{d^2}\rightarrow \ell_p^{d^2}\oplus_p \ell_p^{d^2}$ defined by
\begin{align*}
\Psi_{\alpha,\beta,\gamma}\Big(\sum_{i,j=1}^da_{i,j}e_{i,j}\Big)=\alpha\sum_{i,j=1}^da_{i,j}e_{i,j;1}+\beta\Big(\sum_{i,j=1}^da_{i,j}\Big)\Big(\sum_{i,j=1}^de_{i,j;1}\Big)+
\delta\Big(\sum_{i,j=1}^da_{i,j}\Big)\Big(\sum_{i,j=1}^de_{i,j;2}\Big).
\end{align*}Then,
\begin{align*}
\|\Psi_{\alpha,\beta,\gamma}\|_{cb}=\|\Psi_{\alpha,\beta,\gamma}\|=\Big(|\alpha+\beta|^p+(d^2-1)|\beta|^p+d^2|\delta|^p\Big)^\frac{1}{p}.
\end{align*}
\end{lemma}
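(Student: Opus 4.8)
The plan is to reduce the whole statement to the computation of one vector norm, exploiting that the domain $\ell_1^{d^2}$ carries the maximal operator space structure. First I would dispatch the equality $\|\Psi_{\alpha,\beta,\delta}\|_{cb}=\|\Psi_{\alpha,\beta,\delta}\|$: since $\ell_1^{d^2}$ is a maximal operator space (the same fact already invoked in the proof of Proposition \ref{prop: p embedding superdense}, see \cite[Chapter 3]{Pisierbook}), the completely bounded norm of any linear map out of it coincides with its ordinary norm for \emph{every} target operator space. Hence it suffices to evaluate the Banach space norm $\|\Psi_{\alpha,\beta,\delta}:\ell_1^{d^2}\to \ell_p^{d^2}\oplus_p\ell_p^{d^2}\|$.

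Next I would use that a bounded map on $\ell_1^{d^2}$ attains its norm on the canonical basis. The extreme points of the unit ball of (complex) $\ell_1^{d^2}$ are the vectors $\lambda e_{i,j}$ with $|\lambda|=1$, and since $x\mapsto \|\Psi_{\alpha,\beta,\delta}(x)\|$ is convex, its supremum over the unit ball is attained at such an extreme point. Therefore $\|\Psi_{\alpha,\beta,\delta}\|=\max_{i,j}\|\Psi_{\alpha,\beta,\delta}(e_{i,j})\|$.

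Then I would compute $\Psi_{\alpha,\beta,\delta}(e_{i,j})$ directly from the definition. Since the coefficients of $e_{i,j}$ sum to $1$, the image is
\begin{align*}
\Psi_{\alpha,\beta,\delta}(e_{i,j})=\alpha\, e_{i,j;1}+\beta\sum_{i',j'=1}^d e_{i',j';1}+\delta\sum_{i',j'=1}^d e_{i',j';2}.
\end{align*}
In the first copy of $\ell_p^{d^2}$ this vector has the entry $\alpha+\beta$ in position $(i,j)$ and the entry $\beta$ in the remaining $d^2-1$ positions, so its $\ell_p$-norm is $\big(|\alpha+\beta|^p+(d^2-1)|\beta|^p\big)^{1/p}$; in the second copy it is constant equal to $\delta$, with $\ell_p$-norm $\big(d^2|\delta|^p\big)^{1/p}$. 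Combining the two blocks through the $\ell_p$-direct sum yields
\begin{align*}
\big\|\Psi_{\alpha,\beta,\delta}(e_{i,j})\big\|_{\ell_p^{d^2}\oplus_p\ell_p^{d^2}}=\Big(|\alpha+\beta|^p+(d^2-1)|\beta|^p+d^2|\delta|^p\Big)^{\frac1p},
\end{align*}
which is independent of $(i,j)$; taking the maximum over $(i,j)$ gives the claimed value.

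This lemma carries no genuine obstacle: the only conceptual inputs are the maximality of $\ell_1^{d^2}$, which disposes of the completely bounded norm for free, and the reduction to the canonical basis. The only point requiring minor care is the bookkeeping of how the two ``spreading'' terms distribute the coefficients $\beta$ and $\delta$ across the two summands, together with the observation that the resulting norm does not depend on the chosen basis vector — precisely what makes the supremum collapse to a single expression. The case $p=\infty$ is handled identically, replacing the $\ell_p$-sums by maxima.
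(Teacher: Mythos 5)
Your proposal is correct and follows essentially the same route as the paper: both use the maximality of the operator space structure on $\ell_1^{d^2}$ to identify the cb-norm with the ordinary norm, reduce to the canonical basis vectors by convexity/extreme-point considerations, and then compute the norm of $\Psi_{\alpha,\beta,\delta}(e_{i,j})$ blockwise in the $\ell_p$-direct sum (the paper invokes symmetry to check only $e_{1,1}$, which is the same observation as your independence of $(i,j)$).
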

\begin{proof}
The equality $\|\Psi_{\alpha,\beta,\gamma}\|_{cb}=\|\Psi_{\alpha,\beta,\gamma}\|$ follows from the fact that we consider the natural operator space structure on $\ell_1^{d^2}$, which is the maximal one (see \cite[Chapter 3]{Pisierbook}). On the other hand, in order to estimate $\|\Psi_{\alpha,\beta,\gamma}\|$ it suffices to check the elements of the canonical basis $e_{i,j}$. Moreover, by the symmetry of the problem is suffices to check $e_{1,1}$. Then,
\begin{align*}
\|\Psi_{\alpha,\beta,\gamma}\|=\|\Psi_{\alpha,\beta,\gamma}(e_{1,1})\|_{\ell_p^{d^2}\oplus_p \ell_p^{d^2}}&=\Big\|\alpha e_{1,1;1}+\beta\sum_{i,j=1}^de_{i,j;1}+ \gamma \sum_{i,j=1}^de_{i,j;2}\Big\|_{\ell_p^{d^2}\oplus_p \ell_p^{d^2}}\\&=\Big(|\alpha+\beta|^p+(d^2-1)|\beta|^p+d^2|\delta|^p\Big)^\frac{1}{p}.
\end{align*}
\end{proof}
The key result in our analysis is the following factorization.
\begin{prop}\label{prop factorization}
Let us fix $\alpha=\lambda d^\frac{1}{p'}$, $\beta=\frac{1-\lambda}{d^\frac{1}{p}n}$ and $\delta=\frac{1-\lambda}{d^\frac{1}{p}n}\big(\frac{n-d}{d}\big)^\frac{1}{p}$. Then, we have\
\begin{align*}
(id_d\otimes \Psi_{\alpha,\beta,\gamma})\circ j_1=\tilde{J}_p\circ \theta_\lambda^{d,p}.
\end{align*}
\end{prop}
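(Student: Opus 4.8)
The plan is to verify this operator identity by evaluating both composites on an arbitrary $\rho\in S_1^d$ and matching the resulting expressions summand by summand. The one ingredient that does the real work is the averaging (``twirl'') identity
\begin{align*}
\sum_{k,l=1}^d T_{k,l}\rho T_{k,l}^*= d\,\tr(\rho)\uno_d ,
\end{align*}
which I would record first. It follows directly from the description $T_{k,l}=v_lu_{-k}$ together with part a) of Proposition \ref{basic properties}: writing $\rho=\sum_{a,b}\rho_{ab}|e_a\rangle\langle e_b|$ one computes $T_{k,l}|e_a\rangle\langle e_b|T_{k,l}^*=e^{2\pi ik(b-a)/d}|e_{l+a}\rangle\langle e_{l+b}|$, so that summing over $k$ produces the factor $d\delta_{a,b}$ and then summing over $l$ collapses $\sum_l|e_{l+a}\rangle\langle e_{l+a}|$ to $\uno_d$. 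I would also note the trivial fact that, since each $T_{k,l}$ is unitary, $T_{k,l}\uno_d T_{k,l}^*=\uno_d$.

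First I would compute the left-hand side. Applying $j_1$ gives $j_1(\rho)=\frac{1}{d}\sum_{k,l}T_{k,l}\rho T_{k,l}^*\otimes e_{k,l}$, and then $(id_d\otimes\Psi_{\alpha,\beta,\gamma})$ distributes over the three summands in the definition of $\Psi_{\alpha,\beta,\gamma}$. The first summand contributes $\frac{\alpha}{d}\sum_{k,l}T_{k,l}\rho T_{k,l}^*\otimes e_{k,l;1}$, while the second and third summands each carry the total coefficient $\sum_{k,l}T_{k,l}\rho T_{k,l}^*$; applying the twirl identity to these turns them into $\beta\,\tr(\rho)\uno_d\otimes\sum_{i,j}e_{i,j;1}$ and $\delta\,\tr(\rho)\uno_d\otimes\sum_{i,j}e_{i,j;2}$ respectively.

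Next I would compute the right-hand side. Writing $\theta_\lambda^{d,p}(\rho)=\rho_1\oplus\rho_2$ with $\rho_1=\lambda\rho+\frac{1-\lambda}{n}\tr(\rho)\uno_d$ and $\rho_2=\frac{1-\lambda}{n}\tr(\rho)\big(\frac{n-d}{d}\big)^{1/p}\uno_d$, and feeding this into $\tilde J_p$, the first $\ell_p^{d^2}$-block is $\frac{1}{d^{1/p}}\sum_{k,l}T_{k,l}\rho_1 T_{k,l}^*\otimes e_{k,l;1}$. Using $T_{k,l}\uno_d T_{k,l}^*=\uno_d$, this splits as $\frac{\lambda}{d^{1/p}}\sum_{k,l}T_{k,l}\rho T_{k,l}^*\otimes e_{k,l;1}$ plus $\frac{1-\lambda}{d^{1/p}n}\tr(\rho)\uno_d\otimes\sum_{k,l}e_{k,l;1}$; the second block, where $\rho_2$ is already a multiple of $\uno_d$, is simply $\frac{1-\lambda}{d^{1/p}n}\big(\frac{n-d}{d}\big)^{1/p}\tr(\rho)\uno_d\otimes\sum_{k,l}e_{k,l;2}$.

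Comparing the two sides term by term then forces $\frac{\alpha}{d}=\frac{\lambda}{d^{1/p}}$, i.e. $\alpha=\lambda d^{1/p'}$, together with $\beta=\frac{1-\lambda}{d^{1/p}n}$ and $\delta=\frac{1-\lambda}{d^{1/p}n}\big(\frac{n-d}{d}\big)^{1/p}$, which are exactly the prescribed values, so the identity holds. I do not expect any serious obstacle: the argument is a bookkeeping exercise, and the only points requiring care are the twirl identity (the sole place where the structure of the $T_{k,l}$ enters) and the exponents of $d$ — in particular the appearance of $1/p'$ in $\alpha$, which comes from combining the $d^{-1}$ of $j_1$, the $d$ produced by the twirl, and the $d^{-1/p}$ of $\tilde J_p$.
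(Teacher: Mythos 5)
Your proposal is correct and follows essentially the same route as the paper's own proof: evaluate both composites on an arbitrary $\rho\in S_1^d$, reduce everything to the twirl identity $\sum_{k,l=1}^d T_{k,l}\rho T_{k,l}^*=d\,\mathrm{tr}(\rho)\uno_d$ (which the paper likewise verifies via $\sum_{k,l}T_{k,l}|p\rangle\langle q|T_{k,l}^*=\delta_{p,q}\,d\,\uno_d$), and compare the blocks term by term. Your bookkeeping of the powers of $d$, including the emergence of $d^{1/p'}$ in $\alpha$, matches the paper's computation exactly.
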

\begin{proof}
Let consider an element $\rho\in S_1^d$. Then we have
$$\begin{array}{l}
\big((id_d\otimes \Psi_{\alpha,\beta,\gamma})\circ j_1\big)(\rho)=(id_d\otimes \Psi_{\alpha,\beta,\gamma})\big(\frac{1}{d}\sum_{k,l=1}^d T_{k,l}\rho T_{k,l}^* \otimes e_{k,l}\big)\\  \\=\frac{1}{d}\sum_{k,l=1}^d T_{k,l}\rho T_{k,l}^* \otimes \Psi_{\alpha,\beta,\gamma}(e_{k,l}) \\ \\=\frac{1}{d}\Big(\sum_{k,l=1}^d T_{k,l}\rho T_{k,l}^* \otimes \big(\alpha e_{k,l;1}+\beta\sum_{i,j=1}^de_{i,j;1}\oplus \gamma\sum_{i,j=1}^de_{i,j;2}\big)\Big) \\ \\=
\Big(\frac{\alpha}{d}\sum_{k,l=1}^d T_{k,l}\rho T_{k,l}^* \otimes e_{k,l;1}+ \beta tr(\rho) \uno_d\otimes \sum_{i,j=1}^de_{i,j;1}\Big)\oplus \Big(\gamma tr(\rho) \uno_d\otimes \sum_{i,j=1}^de_{i,j;2}\Big),
\end{array}$$where in the last step we have used that $\sum_{k,l=1}^d T_{k,l}\rho T_{k,l}^*= dtr(\rho) \uno_d$. Indeed, this can be easily checked by noting that
\begin{align*}
\sum_{k,l=1}^dT_{k,l}\ba p\rangle \langle q\ba T_{k,l}^*=\delta_{p,q}d\uno_d\text{        }\text{     for every    }\text{        } p,q=1,\cdots, d.
\end{align*}
If we consider the specific values for $\alpha$, $\beta$ and $\gamma$ stated in the proposition, we obtain
$$\Big(\frac{\lambda}{d^\frac{1}{p}}\sum_{k,l=1}^d T_{k,l}\rho T_{k,l}^* \otimes e_{k,l;1}+ \frac{1-\lambda}{d^\frac{1}{p}n} tr(\rho) \uno_d\otimes \sum_{i,j=1}^de_{i,j;1}\Big)\oplus \Big(\frac{1-\lambda}{d^\frac{1}{p}n}\big(\frac{n-d}{d}\big)^\frac{1}{p} tr(\rho) \uno_d\otimes \sum_{i,j=1}^de_{i,j;2}\Big).$$
On the other hand,
$$\begin{array}{l}
\big(\tilde{J}_p\circ \theta_\lambda^{d,p}(\rho)\big)=\tilde{J}_p\Big(\big(\lambda\rho+ \frac{1-\lambda}{n}tr(\rho)\uno_d\big)\oplus  \frac{1-\lambda}{n}tr(\rho)\big(\frac{n-d}{d}\big)^\frac{1}{p}\uno_d\Big)\\  \\=
\frac{1}{d^\frac{1}{p}}\Big(\sum_{k,l=1}^d T_{k,l}\big(\lambda\rho+ \frac{1-\lambda}{n}tr(\rho)\uno_d\big) T_{k,l}^* \otimes e_{k,l;1}\oplus \sum_{k,l=1}^d T_{k,l}\frac{1-\lambda}{n}tr(\rho)\big(\frac{n-d}{d}\big)^\frac{1}{p}\uno_d T_{k,l}^*\otimes e_{k,l;2}\Big),
\end{array}$$which is equal to
$$
\Big(\frac{\lambda}{d^\frac{1}{p}}\sum_{k,l=1}^d T_{k,l}\rho T_{k,l}^* \otimes e_{k,l;1}+ \frac{1-\lambda}{d^\frac{1}{p}n}tr(\rho) \uno_d\otimes \sum_{k,l=1}^d e_{k,l;1}\Big)\oplus\Big(\frac{1-\lambda}{d^\frac{1}{p}n}\Big(\frac{n-d}{d}\Big)^\frac{1}{p}tr(\rho)\uno_d\otimes  \sum_{k,l=1}^d e_{k,l;2}\Big).$$This concludes the proof.
\end{proof}
\begin{corollary}\label{corollary: prop upper bound}
Let $\theta_\lambda^{d,p}$ be the linear map defined in (\ref{mod channel dep}). Then,
\begin{align*}
\Big\|\theta_\lambda^{d,p}:S_1^d\rightarrow S_p^d\oplus_pS_p^d\Big\|_{cb}\leq \Big(\frac{1}{d}\Big(\lambda d+ \frac{1-\lambda}{n}\Big)^p+\Big(\frac{1-\lambda}{n}\Big)^p\Big(n-\frac{1}{d}\Big)\Big)^\frac{1}{p}.
\end{align*}
\end{corollary}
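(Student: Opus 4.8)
The plan is to read the bound off directly from the factorization established in Proposition \ref{prop factorization} by inverting the embedding $\tilde{J}_p$ on the left. Recall from Theorem \ref{Theorem I: teleportation-general} (in the special case $q=\infty$ needed here) that $\tilde{J}_p$ is a complete isometry of $S_p^d\oplus_p S_p^d$ into $M_d(\ell_p^{d^2}\oplus_p\ell_p^{d^2})$ which is completely complemented by a completely contractive and completely positive map; let $\tilde{W}_p$ denote the corresponding left inverse, so that $\tilde{W}_p\circ \tilde{J}_p=\id_{S_p^d\oplus_p S_p^d}$ and $\|\tilde{W}_p\|_{cb}\le 1$. Composing the identity of Proposition \ref{prop factorization} on the left with $\tilde{W}_p$ yields
\begin{align*}
\theta_\lambda^{d,p}=\tilde{W}_p\circ(id_d\otimes\Psi_{\alpha,\beta,\gamma})\circ j_1,
\end{align*}
which expresses $\theta_\lambda^{d,p}$ as a composition of three maps whose cb-norms are all under control.

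First I would bound the cb-norm multiplicatively,
\begin{align*}
\big\|\theta_\lambda^{d,p}:S_1^d\to S_p^d\oplus_p S_p^d\big\|_{cb}\le \|\tilde{W}_p\|_{cb}\,\big\|id_d\otimes\Psi_{\alpha,\beta,\gamma}\big\|_{cb}\,\|j_1\|_{cb}.
\end{align*}
Here $\|j_1\|_{cb}=1$ because $j_1$ is a complete isometry, $\|\tilde{W}_p\|_{cb}\le 1$ by the preceding paragraph, and by the tensor multiplicativity (\ref{mmp min norm}) of the minimal norm together with $M_d[\,\cdot\,]=M_d\otimes_{min}(\,\cdot\,)$ one has $\|id_d\otimes\Psi_{\alpha,\beta,\gamma}\|_{cb}=\|\Psi_{\alpha,\beta,\gamma}\|_{cb}$. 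Hence $\|\theta_\lambda^{d,p}\|_{cb}\le\|\Psi_{\alpha,\beta,\gamma}\|_{cb}$, and Lemma \ref{lemma norm commutative} evaluates the latter as $\big(|\alpha+\beta|^p+(d^2-1)|\beta|^p+d^2|\delta|^p\big)^{1/p}$.

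It then remains to substitute the prescribed values $\alpha=\lambda d^{1/p'}$, $\beta=\frac{1-\lambda}{d^{1/p}n}$, $\delta=\frac{1-\lambda}{d^{1/p}n}\big(\frac{n-d}{d}\big)^{1/p}$ and simplify. Using $d^{1/p'}=d/d^{1/p}$ gives $\alpha+\beta=d^{-1/p}\big(\lambda d+\frac{1-\lambda}{n}\big)$, so $|\alpha+\beta|^p=\frac1d\big(\lambda d+\frac{1-\lambda}{n}\big)^p$; while a short computation collapses the two remaining terms to $(d^2-1)|\beta|^p+d^2|\delta|^p=\big(\frac{1-\lambda}{n}\big)^p\big(\frac{d^2-1}{d}+(n-d)\big)=\big(\frac{1-\lambda}{n}\big)^p\big(n-\frac1d\big)$, which is exactly the asserted bound. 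I do not expect any genuine obstacle here: the conceptual content is carried entirely by Proposition \ref{prop factorization} and the complementation statement of Theorem \ref{Theorem I: teleportation-general}, so the only points requiring care are the bookkeeping — that $\tilde{W}_p$ really is a completely contractive left inverse of $\tilde{J}_p$, so that the middle map $id_d\otimes\Psi_{\alpha,\beta,\gamma}$ is the sole source of norm growth — and the final arithmetic, where the exponents $1/p$ and $1/p'$ must be seen to combine correctly.
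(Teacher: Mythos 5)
Your proof is correct and follows essentially the same route as the paper: both rest on the factorization of Proposition \ref{prop factorization}, reduce the estimate to $\|\Psi_{\alpha,\beta,\gamma}\|_{cb}$ (using that tensoring with $id_d$ does not change the cb-norm) via Lemma \ref{lemma norm commutative}, and finish with the same arithmetic. The only cosmetic difference is that you invert $\tilde{J}_p$ by composing with the completely contractive projection $\tilde{W}_p$, whereas the paper simply uses that $\tilde{J}_p$ and $j_1$ are complete isometries, so that $\big\|\theta_\lambda^{d,p}\big\|_{cb}=\big\|\tilde{J}_p\circ\theta_\lambda^{d,p}\big\|_{cb}=\big\|(id_d\otimes\Psi_{\alpha,\beta,\gamma})\circ j_1\big\|_{cb}\leq\big\|id_d\otimes\Psi_{\alpha,\beta,\gamma}\big\|_{cb}$, making the complementation unnecessary.
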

\begin{proof}
By Proposition \ref{prop factorization} and the fact that $j_p$ and $\tilde{J}_p$ are complete isometries it suffices to show that
\begin{align*}
\Big\|(id_d\otimes \Psi_{\alpha,\beta,\gamma}):M_d(\ell_1^{d^2})\rightarrow M_d(\ell_p^{d^2}\oplus_p \ell_p^{d^2})\Big\|_{cb}\leq \Big(\frac{1}{d}\Big(\lambda d+ \frac{1-\lambda}{n}\Big)^p+\Big(\frac{1-\lambda}{n}\Big)^p\Big(n-\frac{1}{d}\Big)\Big)^\frac{1}{p}.
\end{align*}
Now, it follows from the definition of the completely bounded norm that
\begin{align*}
\big\|(id_d\otimes \Psi_{\alpha,\beta,\gamma}:M_d(\ell_1^{d^2})\rightarrow M_d(\ell_p^{d^2}\oplus_p \ell_p^{d^2})\big\|_{cb}=\big\|\Psi_{\alpha,\beta,\gamma}:\ell_1^{d^2}\rightarrow \ell_p^{d^2}\oplus_p \ell_p^{d^2}\big\|_{cb}\\= \Big(|\alpha+\beta|^p+(d^2-1)|\beta|^p+d^2|\delta|^p\Big)^\frac{1}{p},
\end{align*}where the last equality follows from Lemma \ref{lemma norm commutative}.
By considering the values  for $\alpha$, $\beta$ and $\gamma$ stated in Proposition \ref{prop factorization}, we obtain
\begin{align*}
\big\|(id_d\otimes \Psi_{\alpha,\beta,\gamma})\big\|_{cb}^p&=\Big(\lambda d^\frac{1}{p'}+ \frac{1-\lambda}{d^\frac{1}{p}n}\Big)^p+ (d^2-1)\Big(\frac{1-\lambda}{d^\frac{1}{p}n}\Big)^p+ d^2\Big(\frac{1-\lambda}{d^\frac{1}{p}n}\Big(\frac{n-d}{d}\Big)^\frac{1}{p}\Big)^p\\&=
\frac{1}{d}\Big(\lambda d+ \frac{1-\lambda}{n}\Big)^p+\frac{d(1-\lambda)^p}{n^p}-\frac{(1-\lambda)^p}{dn^p}+\frac{(1-\lambda)^p}{n^{p-1}}-\frac{d(1-\lambda)^p}{n^p}\\&=
\frac{1}{d}\Big(\lambda d+ \frac{1-\lambda}{n}\Big)^p+\big(\frac{1-\lambda}{n}\big)^p(n-\frac{1}{d}).
\end{align*}
\end{proof}
We are now ready to prove (\ref{p-norm depol}).
\begin{proof}[Proof of Equation (\ref{p-norm depol}) in Theorem \ref{Main theorem Depolarizing}]
The upper bound in Equation (\ref{p-norm depol}) follows from Proposition \ref{prop upper bound by new channel} and Corollary \ref{corollary: prop upper bound}. Thus, we must only show the lower bound.

Let us consider the particular element $\xi=\frac{1}{d}\sum_{i,j=1}^d|i\rangle \langle j|\otimes |i\rangle \langle j|\in M_d(M_n)$. We have already mentioned that for a positive element $\xi$ in $M_d(M_n)$ one has
\begin{align*}
\|\xi\|_{S_p^d(S_1^n)}=\big\|(id_d\otimes tr_n)(\xi)\big\|_{S_p^d}=\frac{1}{d}\|\uno_d\|_{S_p^d}=\frac{d^\frac{1}{p}}{d}=\frac{1}{d^\frac{1}{p'}}.
\end{align*}On the other hand,
\begin{align*}
\big\|(id_d\otimes \mathcal \mathcal D_\lambda)(\xi)\big\|_{S_p^d(S_p^n)}=\big\|\lambda\xi +(1-\lambda)\frac{\uno_{nd}}{nd}\big\|_{S_p^{dn}}.
\end{align*}Then, using that $\xi=|\eta\rangle\langle \eta|$ is a pure state with $\eta=\frac{1}{\sqrt{d}}\sum_{i=1}^d\ba ii\rangle $, the element $\lambda\xi +(1-\lambda)\frac{\uno_{nd}}{n}$ can be seen as a matrix in $M_{nd}$ with all eigenvalues equal $\frac{1-\lambda}{nd}$ up to one which is $\lambda+\frac{1-\lambda}{nd}$. Hence,
\begin{align*}
\big\|(id_d\otimes \mathcal \mathcal D_\lambda)(\xi)\big\|_{S_p^d(S_p^n)}=\Big(\Big(\lambda+\frac{1-\lambda}{nd}\Big)^p+ (nd-1)\Big(\frac{1-\lambda}{nd}\Big)^p\Big)^\frac{1}{p}.
\end{align*}We immediately conclude that
\begin{align*}
\big\|id_d\otimes \mathcal \mathcal D_\lambda:S_p^d(S_1^n)\rightarrow S_p^d(S_p^n)\big \|_d\geq d^\frac{1}{p'}\Big(\big(\lambda+\frac{1-\lambda}{nd}\big)^p+ (nd-1)\Big(\frac{1-\lambda}{nd}\Big)^p\Big)^\frac{1}{p}.
\end{align*}
Now, it is very easy to see that this is exactly the same expression as the one in Equation (\ref{p-norm depol}). Indeed,
\begin{align*}
&d^\frac{1}{p'}\Big(\big(\lambda+\frac{1-\lambda}{nd}\big)^p+ (nd-1)\big(\frac{1-\lambda}{nd}\big)^p\Big)^\frac{1}{p}\\&=\Big(d^{p-1}\Big[\frac{1}{d^p}\Big(\lambda d+\frac{1-\lambda}{n}\Big)^p+ \frac{1}{d^p}(nd-1)\Big(\frac{1-\lambda}{n}\Big)^p\Big]\Big)^\frac{1}{p}\\&=\Big(\frac{1}{d}\Big[\Big(\lambda d+\frac{1-\lambda}{n}\Big)^p+(nd-1)\Big(\frac{1-\lambda}{n}\Big)^p\Big]\Big)^\frac{1}{p}\\&=\Big(\frac{1}{d}\Big(\lambda d+\frac{1-\lambda}{n}\Big)^p+(n-\frac{1}{d})\Big(\frac{1-\lambda}{n}\Big)^p\Big)^\frac{1}{p}.
\end{align*}
Therefore, the result follows.
\end{proof}
\subsection{Non additivity of $C^d_{prod}$ for the depolarizing channel}
As we said in the previous section the quantity $C_{prod}^d(\mathcal \mathcal D_\lambda)$ in Theorem \ref{Main theorem Depolarizing} extends the corresponding results for the product state classical capacity of the quantum depolarizing channel (with no assisted entanglement), so $d=1$, and for the product state (unlimited) assisted entanglement classical capacity, $d=n$. In fact, it is known that in both cases the quantity $C_{prod}^d(\mathcal \mathcal D_\lambda)$ coincides with the capacity $C^d(\mathcal \mathcal D_\lambda)$. Somehow surprisingly, this is no longer true if $1<d< n$ as we stated in Corollary \ref{Corollary Depolarizing}.

First of all, note that it is very easy to see that
\begin{align}\label{trivial lower bound for the tensor}
C_{prod}^{d^2}(\mathcal \mathcal D_\lambda\otimes \mathcal \mathcal D_\lambda)\geq C_{prod}^{d^2}(\mathcal \mathcal D_\lambda)+C_{prod}^1(\mathcal \mathcal D_\lambda).
\end{align}
Indeed, from a physical point of view this means that a particular strategy for Alice and Bob with a $d^2$-dimensional entangled state consists of using all the entanglement in one of the channel and using the other channel without assisted entanglement. From a mathematical point of view, this can be deduced from the fact that
\begin{align*}
\big\|\mathcal \mathcal D_\lambda\otimes \mathcal \mathcal D_\lambda:S_1^n\otimes_1 S_1^n\rightarrow S_p^n\otimes_p S_p^n\big\|_{d^2}\geq
\big\|\mathcal \mathcal D_\lambda:S_1^n\rightarrow S_p^n\big\|_{d^2}+
\big\|\mathcal \mathcal D_\lambda:S_1^n\rightarrow S_p^n\big\|,
\end{align*}which is obvious by restricting to elements of the form $x=y\otimes z$, with $y\in S_p^{d^2}(S_1^n)$ and $z\in S_1^n$ in the computation of the norm. The fact that we have a complete description of $C_{prod}^d(\mathcal \mathcal D_\lambda)$ for every $n$, $d$ and $\lambda$ allows us to exactly compute the quantity
\begin{align}
f(n,d,\lambda)=C_{prod}^{d^2}(\mathcal \mathcal D_\lambda)+C_{prod}^1(\mathcal \mathcal D_\lambda)-2C_{prod}^{d}(\mathcal \mathcal D_\lambda).
\end{align}According to (\ref{trivial lower bound for the tensor}), we want to show that $f(n,d,\lambda)$ is strictly positive for some values of $n$, $d$ and $\lambda$. Now,
\begin{align*}
f(n,d,\lambda)&=\big(\lambda+\frac{1-\lambda}{nd^2}\big)\ln \big(\lambda+\frac{1-\lambda}{nd^2}\big)+(nd^2-1)\big(\frac{1-\lambda}{nd^2}\big)\ln \big(\frac{1-\lambda}{nd^2}\big)
\\&+\big(\lambda+\frac{1-\lambda}{n}\big)\ln \big (\lambda+\frac{1-\lambda}{n}\big)+(n-1)\big(\frac{1-\lambda}{n}\big)\ln \big(\frac{1-\lambda}{n}\big)
\\&-2\big(\lambda+\frac{1-\lambda}{nd}\big)\ln \big(\lambda+\frac{1-\lambda}{nd}\big)-2(nd-1)\big(\frac{1-\lambda}{nd}\big)\ln \big(\frac{1-\lambda}{nd}\big).
\end{align*}The most basic example\footnote{It can be shown that for $n=3$, $C_{prod}^3(\mathcal \mathcal D_\lambda)+C_{prod}^1(\mathcal \mathcal D_\lambda)-2C_{prod}^2(\mathcal \mathcal D_\lambda)<0$ for every $\lambda\in (0,1)$.} can be found for $n=4$ and $d=2$. The function $h(\lambda)=f(4,2,\lambda)$ is represented below.
\begin{figure*}[h]
\begin{center}
  \includegraphics[width=5cm]{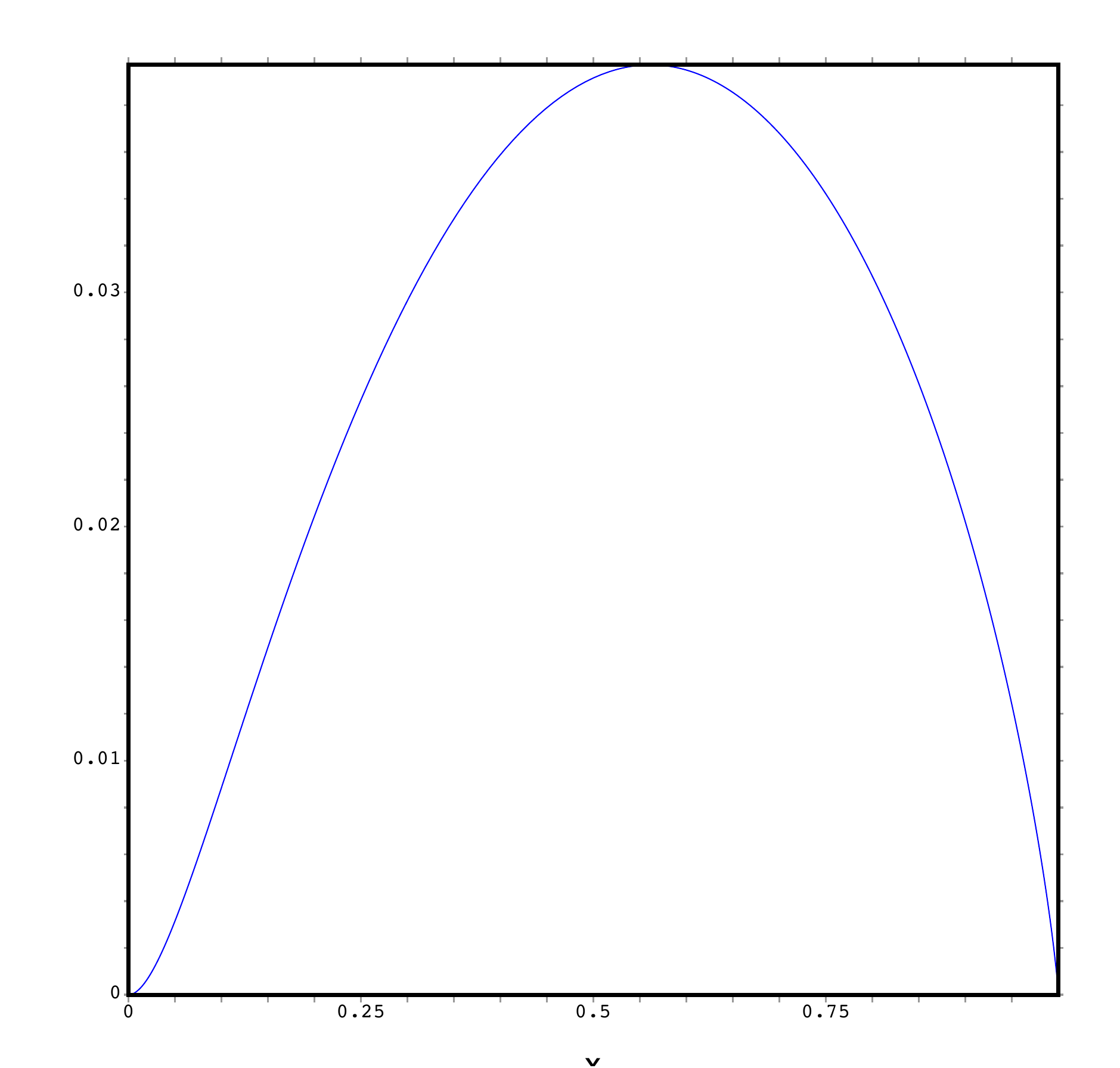}\\
\end{center}
\end{figure*}
Recall that, according to Remark \ref{log 2 vs lgn}, in order to compute the real quantity $C^d_{prod}(\mathcal \mathcal D_\lambda)$ we must multiply by $\frac{1}{\ln 2}$. We can see that the ``amount of violation'' $h(\lambda)$ is very small. Some other examples can be found where the amount of violation is arbitrary large. Indeed, it was shown in \cite[Theorem 1.2]{JuPa2} that for every natural number $n$, one can find a quantum channel $\Ne:S_1^{2n}\rightarrow S_1^{2n}$ such that $$C_{prod}^{n}(\Ne\otimes \Ne)- 2C_{prod}^{\sqrt{n}}(\Ne)\succeq \frac{1}{3}\log_2 n,$$where we use the symbol $\succeq$ to denote inequality up to universal (additive) constants which do not depend on $n$. One could wonder whether we can have a similar result for the quantum depolarizing channel so that the reason for our small value in the violation is that we are considering parameters $n$ and $d$ very small. In fact, our Theorem \ref{Main Theorem Erasure} (Equation (\ref{sharp bounds dep})) s!
 hows that
  for the quantum depolarizing channel the amount of violation is bounded by $\ln 2$ independently of $n$ and $d$ (and the number of uses of the channel). To finish this section we will prove (\ref{sharp bounds dep}) by assuming Equation (\ref{Eq multiplicativity erasure}), which will be proved in the next section.
\begin{proof}[Proof of Equation (\ref{sharp bounds dep}) in Theorem \ref{Main Theorem Erasure}]
Equation (\ref{Eq multiplicativity erasure}) states that $C^d(\mathcal E_\lambda)=\lambda \ln (nd)$, where $\mathcal E_\lambda:S_1^n\rightarrow S_1^n\oplus_1 \C$ denotes the quantum erasure channel with parameter $\lambda$, defined by $$\mathcal E_\lambda(\rho)=\lambda \rho \oplus (1-\lambda)tr(\rho)   \text{       }\text {       for every     }\text{       }  \rho\in S_1^n.$$ Since it is very easy to see that $C^d(\mathcal D_\lambda)\leq C^d(\mathcal E_\lambda)$, the last inequality in (\ref{sharp bounds dep}) follows. On the other hand, we know that the inequality $C_{prod}^d(\mathcal D_\lambda)\leq C^d(\mathcal D_\lambda)$ holds for every channel. Therefore, we just need to show the first inequality in (\ref{sharp bounds dep}). To this end, note that
\begin{align*}
C_{prod}^d(\mathcal D_\lambda)&= \ln (nd)+ \mu\ln \mu+ \Big(\frac{nd-1}{nd}\Big)(1-\lambda)\ln \Big(\frac{1-\lambda}{nd}\Big)\\&=
\ln (nd)+ \mu\ln \mu+\Big(\frac{nd-1}{nd}\Big)(1-\lambda)\Big[\ln \Big(\frac{1-\lambda}{nd}\Big)+\ln(nd-1)-\ln(nd-1)\Big]\\&=
\ln (nd)-H(\mu,1-\mu)-\Big(\frac{nd-1}{nd}\Big)(1-\lambda)\ln(nd-1)\\&=
\Big(1- \Big(\frac{nd-1}{nd})(1-\lambda\Big)\Big)\ln(nd)- H(\mu,1-\mu)- (\frac{nd-1}{nd})(1-\lambda)\ln(\frac{nd-1}{nd})\\&=
\big(\lambda+ \frac{1-\lambda}{nd}\big)\ln(nd)- H(\mu,1-\mu)- \Big(\frac{nd-1}{nd}\Big)(1-\lambda)\ln \Big(\frac{nd-1}{nd}\Big)\\& \geq \lambda\ln (nd)-H(\mu,1-\mu).
\end{align*}
\end{proof}
\section{$d$-restricted capacity of the quantum erasure channel}\label{Sec: Erasure channel}
In this section we will prove the part of Theorem \ref{Main theorem Depolarizing} and Theorem \ref{Main Theorem Erasure} corresponding to the quantum erasure channel. We will start showing Equation (\ref{p-norm erasure}) in Theorem \ref{Main theorem Depolarizing}. As in the case of the quantum depolarizing channel, it is very easy to see that the quantum erasure channel is covariant. In fact, one can also easily check that the channel $\mathcal E_\lambda^{\otimes_k}$ is covariant for every $k$, according to our Definition \ref{Def:covariant}. Let us show the case $k=2$ as an illustration. In this case the channel $$\mathcal E_\lambda^{\otimes_2}:S_1^{n^2}\rightarrow S_1^{n^2}\oplus_1 S_1^n \oplus_1 S_1^n \oplus \C$$is given by $$\mathcal E_\lambda^{\otimes_2}(\rho)=\lambda^2 \rho\oplus \lambda(1-\lambda)(id_n\otimes tr_n)(\rho)\oplus \lambda(1-\lambda)(tr_n\otimes id_n)(\rho)\oplus(1-\lambda)^2(tr_n\otimes tr_n)(\rho).$$Then, we can consider the group $G=\mathbb U(n)\times \!
 mathbb U(
 n)$ together with the representations $\pi=\sigma_1=\uno_{\mathbb U(n)\times \mathbb U(n)}$, $\sigma_2=\Pi_1$, $\sigma_3=\Pi_2$ and $\sigma_4=\uno_{1,1}\circ\Pi_1$, where $\Pi_1:\mathbb U(n)\times \mathbb U(n)\rightarrow \mathbb U(n)$ is the projection onto the first copy, $\Pi_2$ is the projection onto the second copy and $\uno_{1,1}:\mathbb U(n)\rightarrow \mathbb U(1)$ is the $1$-dimensional unitary representation, given by $U\mapsto \langle 1\ba U\ba 1\rangle$. Then, one can see that Properties 1 and 2 in Definition \ref{Def:covariant} are verified by this choice.

Note that, according to Proposition \ref{prop:C^d_{prod} for covariant direct sume} we have that
\begin{align*}
C^d_{prod}(\mathcal E_\lambda)&=\lambda \ln  n+ V_d(\mathcal E_\lambda)\\&=\lambda \ln  n+ \sup \Big\{S\Big((id_d\otimes tr_n)(|\psi\rangle\langle \psi|)\Big)-\lambda S\Big((id_d\otimes id_n)(|\psi\rangle\langle \psi|)\Big)\\&- (1-\lambda)S\Big((id_d\otimes tr_n)(|\psi\rangle\langle \psi|)\Big)\Big\}\\&=\lambda \ln  n+ \lambda \sup \Big\{S\Big((id_d\otimes tr_n)(|\psi\rangle\langle \psi|)\Big)\Big\}\leq \lambda \ln  n+ \lambda \ln  d=\lambda \ln  (nd).
\end{align*}Here, the supremum runs over all pure states $|\psi\rangle\in \C^d\otimes \C^n$ and we have used that $S(\rho)=0$ for every pure state $\rho$ and also that $S(\eta)\leq \ln d$ for every $d$-dimensional state $\eta$.

On the other hand, if we consider the $d$-maximally entangled state $|\psi_d\rangle=\frac{1}{\sqrt{d}}\sum_{i=1}^d|e_i\otimes e_i\in \C^d\otimes \C^n$ we can check that
\begin{align*}
C^d_{prod}(\mathcal E_\lambda)\geq \lambda \ln  n+ \lambda S\Big((id_d\otimes tr_n)(|\psi_d\rangle\langle \psi_d|)\Big)= \lambda \ln  n+ \lambda \ln  d=\lambda \ln  (nd).
\end{align*}Therefore, the previous argument already gives us the right expression for $C^d_{prod}(\mathcal E_\lambda)$. However, in this work we are interested in computing the $d$-norms of the channels, so we will show here Equation (\ref{p-norm erasure}) from which the previous quantity can be obtained by differentiating (and adding an extra $\ln$-term). It is interesting to remark here that, computing the $d$-norm of a channel is a stronger result than computing its capacity. This point will be particularly important in the study of $\mathcal E_\lambda^k$ below, since we couldn't find a good expression for its $d^k$-norm and we directly computed $C^{d^k}_{prod}(\mathcal E_\lambda^{\otimes_k})$.
\begin{proof}[Proof of Equation (\ref{p-norm erasure}) in Theorem \ref{Main theorem Depolarizing}]
Let us first note that
\begin{align*}
\Big \|\mathcal E_\lambda:S_1^n\rightarrow S_p^n\oplus_p \C\Big \|_d=\Big \|id_d\otimes \mathcal E_\lambda:S_p^d(S_1^n)\rightarrow S_p^d(S_p^n)\oplus_p S_p^d\Big \|.
\end{align*}In order to compute this norm, let us consider an element $\rho\in M_d\otimes M_n$ with $\|\rho\|_{S_p^d(S_1^n)}=1$. It is very easy that this implies, in particular, that $\|(id_d\otimes tr_n)(\rho)\|_{S_p^d}\leq 1$. Indeed, this is
a trivial consequence of the fact that $tr:S_1^n\rightarrow \C$ is a (complete) contraction. On the other hand,
\begin{align*}
(id_d\otimes \mathcal E_\lambda)(\rho)=\lambda \rho \oplus (1-\lambda)(id_d\otimes tr_n)(\rho).
\end{align*}Thus,
\begin{align*}
\big\|(id_d\otimes \mathcal E_\lambda)(\rho)\big\|_{S_p^d(S_p^n)\oplus_p S_p^d}&=\Big(\lambda^p\|\rho\|_{S_p^{dn}}^p+(1-\lambda)^p\big\|(id_d\otimes tr_n)(\rho)\big\|_{S_p^d}^p\Big)^\frac{1}{p}\\&
\leq\Big(\lambda^p\|\rho\|_{S_p^{dn}}^p+(1-\lambda)^p\Big)^\frac{1}{p}\\&\leq \Big(\lambda^pd^{p-1}+(1-\lambda)^p\Big)^\frac{1}{p}.
\end{align*}Here, in the last inequality we have used that $$\big\|id_n: S_1^n \rightarrow S_p^n\big\|_d=\big\|id_d\otimes id_n: S_p^d(S_1^n) \rightarrow S_p^d(S_p^n)\big\|=d^{1-\frac{1}{p}}.$$
On the other hand, one can see that
\begin{align*}
\big\|id_d\otimes \mathcal E_\lambda:S_p^d(S_1^n)\rightarrow S_p^d(S_p^n)\oplus_p S_p^d\big\|\geq \Big(\lambda^pd^{p-1}+(1-\lambda)^p\Big)^\frac{1}{p},
\end{align*}by testing this norm at the $d$-maximally entangled state $\rho=|\psi_d\rangle\langle\psi_d|$.
\end{proof}
In order to show Equation (\ref{Eq multiplicativity erasure}) in Theorem \ref{Main Theorem Erasure} we must deal with an arbitrary number of tensor products of the channel $\mathcal E_\lambda$. To this end, we need to introduce some notation. Let us fix $k\in \N$ and consider a natural number $s$ with $0\leq s\leq k$. We note that there are $\binom{k}{s}$ subsets $A$ of $\{1,\cdots, k\}$ with cardinal $|A|=s$. For each of these sets we will denote $$\Ne_A:S_1^{n^k}\rightarrow S_1^{n^{|A|}},$$ defined by $$\Ne_A(\rho)=(id_A\otimes tr_{A^c})(\rho)\text{      }\text{   for every    }\text{        }  \rho\in S_1^{n^k},$$where $(id_A\otimes tr_{A^c})(\rho)\in  S_1^{n^{|A|}}$ denotes the state $\rho$ after tracing out all the systems $j\in A^c$. Then, it is clear that
\begin{align*}
\mathcal E_\lambda^{\otimes_k}:S_1^{n^k}\rightarrow \bigoplus_{s=0}^k\bigoplus_{\substack{A\subseteq \{1,\cdots, k\}\\|A|=s}}S_1^{n^{|A|}}
\end{align*}is given by
\begin{align*}
\mathcal E_\lambda^{\otimes_k}(\rho)= \bigoplus_{s=0}^k\bigoplus_{\substack{A\subseteq \{1,\cdots, k\}\\|A|=s}}\lambda^s(1-\lambda)^{k-s}\Ne_A(\rho) \text{      }\text{   for every    }\text{        }  \rho\in S_1^{n^k}.
\end{align*}
\begin{lemma}\label{lemma:Upper bound C^d_{prod} erasure}
For every $k\in \N$ we have
\begin{align*}
C^d_{prod}(\mathcal E_\lambda^{\otimes_k})\leq \sum_{s=1}^k\binom{k}{s}\lambda^s(1-\lambda)^{k-s}\ln  n^s+\sum_{s=0}^k\binom{k}{s}\lambda^s(1-\lambda)^{k-s}V_d(\Ne_s).
\end{align*}Here, $$\Ne_s:S_1^{n^k}\rightarrow \bigoplus_{\substack{A\subseteq \{1,\cdots, k\}\\|A|=s}}S_1^{n^{|A|}}$$is defined by
\begin{align*}
\Ne_s(\rho)=\frac{1}{\binom{k}{s}}\bigoplus_{\substack{A\subseteq \{1,\cdots, k\}\\|A|=s}}\Ne_A(\rho)\text{      }\text{   for every    }\text{        }  \rho\in S_1^{n^k}.
\end{align*}
\end{lemma}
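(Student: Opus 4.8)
The plan is to regard $\mathcal E_\lambda^{\otimes_k}$ as a channel of the direct-sum form (\ref{channel direct sum}) and feed it into the capacity formula of Proposition \ref{prop:d-restricted capacity direct sume}. In the decomposition of $\mathcal E_\lambda^{\otimes_k}$ already written above, the component channels are the partial traces $\Ne_A$ (ranging over all $A\subseteq\{1,\dots,k\}$) and the associated probability weights are $\mu_A=\lambda^{|A|}(1-\lambda)^{k-|A|}$; these sum to $(\lambda+(1-\lambda))^k=1$, so they form a genuine probability distribution. Proposition \ref{prop:d-restricted capacity direct sume} then expresses $C^d_{prod}(\mathcal E_\lambda^{\otimes_k})$ as a supremum over ensembles $\{(\lambda_i,\rho_i)\}$ of $\sum_A \mu_A$ times the sum of an ``output entropy'' term $S\big(\sum_i\lambda_i\,\Ne_A((tr_d\otimes id)(\rho_i))\big)$ and an ``entropy exchange'' term $\sum_i\lambda_i\big[S((id_d\otimes tr)(\rho_i))-S((id_d\otimes\Ne_A)(\rho_i))\big]$.

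First I would bound the output-entropy contribution. When $|A|=s$ the output of $\Ne_A$ lives in $S_1^{n^s}$, so the von Neumann entropy $S\big(\sum_i\lambda_i\,\Ne_A(\cdots)\big)\le\ln n^s$ uniformly in the ensemble; summing over the $\binom{k}{s}$ subsets of size $s$ with their common weight produces exactly $\sum_{s=1}^k\binom{k}{s}\lambda^s(1-\lambda)^{k-s}\ln n^s$, the $s=0$ term vanishing since $\ln n^0=0$. Next I would group the entropy-exchange contribution by the common size $s$ of $A$. Since all subsets of a fixed size carry the same weight $\lambda^s(1-\lambda)^{k-s}$, I can factor it out and recognize the residual average $\frac{1}{\binom{k}{s}}\sum_{|A|=s}\big[S((id_d\otimes tr)(\rho_i))-S((id_d\otimes\Ne_A)(\rho_i))\big]$, evaluated at each fixed state $\rho_i$, as precisely the expression being optimized in the definition (\ref{V_d}) of $V_d(\Ne_s)$. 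By Lemma \ref{lemma:V_d vs S_d} that supremum may be taken over all states, so this average is $\le V_d(\Ne_s)$ for every $\rho_i$; using $\sum_i\lambda_i=1$, the whole entropy-exchange piece is bounded by $\sum_{s=0}^k\binom{k}{s}\lambda^s(1-\lambda)^{k-s}V_d(\Ne_s)$. Adding the two bounds yields the claim.

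The only delicate point is the combinatorial bookkeeping: the factor $\binom{k}{s}$ that arises from summing over all subsets of a fixed size must cancel exactly against the normalization $1/\binom{k}{s}$ built into the definition of $\Ne_s$, so that the per-state average coincides with the quantity controlled by $V_d(\Ne_s)$. Once this matching of weights is set up, no further analysis is needed, since each piece is dominated termwise and the ensemble weights $\lambda_i$ are already normalized; the remaining step is the routine observation that $V_d(\Ne_s)$, defined via a supremum over states, bounds every individual averaged entropy difference.
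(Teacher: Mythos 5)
Your proof is correct, but it takes a genuinely different route from the paper's. The paper first uses the covariance of $\mathcal E_\lambda^{\otimes_k}$ (verified, in sketch, just before the lemma) to invoke Proposition \ref{prop:C^d_{prod} for covariant direct sume}, which gives the \emph{equality} $C^d_{prod}(\mathcal E_\lambda^{\otimes_k})=\sum_{s=1}^k\binom{k}{s}\lambda^s(1-\lambda)^{k-s}\ln n^s+V_d(\mathcal E_\lambda^{\otimes_k})$; it then expands $V_d(\mathcal E_\lambda^{\otimes_k})$ as a supremum over \emph{pure} states, inserts the identity $1=\sum_{s=0}^k\binom{k}{s}\lambda^s(1-\lambda)^{k-s}$, and bounds the supremum of the resulting sum by the sum of the suprema, each of which is exactly $V_d(\Ne_s)$. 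You instead start from the general ensemble formula of Proposition \ref{prop:d-restricted capacity direct sume}, bound each output-entropy term by the log of the output dimension $\ln n^s$, and bound the averaged entropy-exchange terms by $V_d(\Ne_s)$; since the ensemble states $\rho_i$ need not be pure, this last step genuinely requires the mixed-state characterization of $V_d$ from Lemma \ref{lemma:V_d vs S_d}, a point you correctly identified and handled. The trade-off: your argument never uses covariance of $\mathcal E_\lambda^{\otimes_k}$ at all, so it is more self-contained and shows the inequality holds for the tensor power irrespective of any group symmetry --- in effect you are inlining the upper-bound half of the proof of Proposition \ref{prop:C^d_{prod} for covariant direct sume}, which proceeds by precisely these two bounds. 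The paper's route buys an equality at the first step (covariance is only needed for the matching lower bound, which this lemma does not claim) and lets it work with pure states throughout, avoiding the appeal to Lemma \ref{lemma:V_d vs S_d}.
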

\begin{proof}
According to Proposition \ref{prop:C^d_{prod} for covariant direct sume} and the covariant property of $\mathcal E_\lambda^{\otimes_k}$ we have that
\begin{align*}
C^d_{prod}(\mathcal E_\lambda^{\otimes_k})= \sum_{s=1}^k\binom{k}{s}\lambda^s(1-\lambda)^{k-s}\ln  n^s+ V_d(\mathcal E_\lambda^{\otimes_k}).
\end{align*}On the other hand, by definition, $V_d(\mathcal E_\lambda^{\otimes_k})$ is equal to
\begin{align*}
&\sup\Big\{S\Big((id_d\otimes tr_{n^k})(|\psi\rangle\langle \psi|)\Big)-\sum_{s=0}^k\sum_{\substack{A\subseteq \{1,\cdots, k\}\\|A|=s}}\lambda^s(1-\lambda)^{k-s}S\Big((id_d\otimes \Ne_A)(|\psi\rangle\langle \psi|)\Big)\Big\}=\\&
\sup\Big\{\sum_{s=0}^k\binom{k}{s}\lambda^s(1-\lambda)^{k-s}\Big[S\Big((id_d\otimes tr_{n^k})(|\psi\rangle\langle \psi|)\Big)-\frac{1}{\binom{k}{s}}\sum_{\substack{A\subseteq \{1,\cdots, k\}\\|A|=s}}S\Big((id_d\otimes \Ne_A)(|\psi\rangle\langle \psi|)\Big)\Big]\Big\},
\end{align*}where the supremum is taking over all pure states $\ba\psi\rangle\in \C^d\otimes \C^{n^k}$. Here, we have used the identity
\begin{align}\label{identity}
1=\big(\lambda+ (1-\lambda)\big)^k=\sum_{s=0}^k\binom{k}{s}\lambda^s(1-\lambda)^{k-s}.
\end{align}
It follows now easily that the previous quantity is lower than or equal to
\begin{align*}
&\sum_{s=0}^k\binom{k}{s}\lambda^s(1-\lambda)^{k-s}\sup\Big\{S\Big((id_d\otimes tr_{n^k})(|\psi\rangle\langle \psi|)\Big)-\frac{1}{\binom{k}{s}}\sum_{\substack{A\subseteq \{1,\cdots, k\}\\|A|=s}}S\Big((id_d\otimes \Ne_A)(|\psi\rangle\langle \psi|)\Big)\Big\}\\=&\sum_{s=0}^k\binom{k}{s}\lambda^s(1-\lambda)^{k-s}V_d(\Ne_s),
\end{align*}where all the supremums are taking over all pure states $\ba\psi\rangle\in \C^d\otimes \C^{n^k}$.

The statement of the lemma follows.
\end{proof}
\begin{lemma}\label{lemma add erasure}
Let $k$ and $s$ be two natural numbers such that $0\leq s\leq k$. Then,
\begin{align*}
V_d(\Ne_s)\leq \frac{s}{k}\ln  d.
\end{align*}
\end{lemma}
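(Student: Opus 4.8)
The plan is to unfold the definition (\ref{V_d}) of $V_d(\Ne_s)$, to recognise every von Neumann entropy occurring there as the entropy of a reduced state of a single global pure state, and then to reduce the estimate to a subset–averaging inequality of Han type. Throughout I would regard a pure state $|\psi\rangle\in\C^d\otimes\C^{n^k}$ as living on a reference system $D$ of dimension $d$ together with $k$ systems $1,\dots,k$, each of dimension $n$; write $N=\{1,\dots,k\}$, and for a set of systems $X$ let $\rho_X$ denote the reduction of $|\psi\rangle\langle\psi|$ to $X$. Since $\Ne_A(\rho)=(id_A\otimes tr_{A^c})(\rho)$, one has $(id_d\otimes\Ne_A)(|\psi\rangle\langle\psi|)=\rho_{DA}$ and $(id_d\otimes tr_{n^k})(|\psi\rangle\langle\psi|)=\rho_{D}$, and because $\Ne_s$ assigns weight $1/\binom{k}{s}$ to each $\Ne_A$ with $|A|=s$, the definition (\ref{V_d}) reads
\[
V_d(\Ne_s)=\sup_{|\psi\rangle}\frac{1}{\binom{k}{s}}\sum_{|A|=s}\bigl[S(\rho_{D})-S(\rho_{DA})\bigr].
\]

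Next I would fix $|\psi\rangle$, set $\Phi_s=\frac{1}{\binom{k}{s}}\sum_{|A|=s}[S(\rho_{D})-S(\rho_{DA})]$, and exploit purity. Since $|\psi\rangle\langle\psi|$ is pure on $\{D\}\cup N$, the complement of $\{D\}\cup A$ in the total system is $A^c:=N\setminus A$, so $S(\rho_{DA})=S(\rho_{A^c})$, and likewise $S(\rho_{D})=S(\rho_{N})$. Introducing the average marginal entropy $\bar H_m:=\binom{k}{m}^{-1}\sum_{|B|=m}S(\rho_B)$ of the size-$m$ reductions of $\rho_N$ and reindexing by $B=A^c$ (so $|B|=k-s$), this rewrites as
\[
\Phi_s=S(\rho_{N})-\frac{1}{\binom{k}{k-s}}\sum_{|B|=k-s}S(\rho_B)=\bar H_k-\bar H_{k-s}.
\]

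The main step is \emph{Han's subset–entropy inequality}, which asserts that $\bar H_m/m$ is non-increasing in $m$ and whose proof uses only the strong subadditivity of the von Neumann entropy (equivalently, submodularity of $X\mapsto S(\rho_X)$), hence is valid in the present quantum setting. Applied with $m=k-s\le k$ it gives $\bar H_{k-s}\ge\frac{k-s}{k}\bar H_k$, so that
\[
\Phi_s=\bar H_k-\bar H_{k-s}\le\frac{s}{k}\bar H_k=\frac{s}{k}S(\rho_{N})=\frac{s}{k}S(\rho_{D})\le\frac{s}{k}\ln d,
\]
the final inequality being the maximal-entropy bound for a state on the $d$-dimensional system $D$. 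Taking the supremum over $|\psi\rangle$ yields the claim.

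The only genuinely nontrivial ingredient is Han's inequality, and I expect the main obstacle to be stating/justifying its von Neumann version cleanly rather than any computation. I would supply it by the standard averaging-over-chains argument: average the telescoping identity $S(\rho_B)=\sum_{t}[S(\rho_{B^{(t)}})-S(\rho_{B^{(t-1)}})]$ over all orderings of the elements of $B$ and apply strong subadditivity term by term, which produces the monotonicity of $\bar H_m/m$. The degenerate cases are immediate consistency checks: for $s=0$ both sides vanish, and for $s=k$ one has $\bar H_0=0$, giving $\Phi_k=S(\rho_D)\le\ln d=\frac{k}{k}\ln d$.
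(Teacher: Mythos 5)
Your proof is correct, and at the top level it follows the same route as the paper's: both exploit purity of the global state (so that $S(\rho_{DA})=S(\rho_{A^c})$ and $S(\rho_D)=S(\rho_N)$) to reduce the lemma to a quantum Han-type subset inequality, namely $\frac{m}{k}\,S(\rho_N)\le \binom{k}{m}^{-1}\sum_{|B|=m}S(\rho_B)$ for an arbitrary \emph{mixed} state $\rho_N$ on the $k$ systems, and both derive that inequality from strong subadditivity, finishing with $S(\rho_D)\le\ln d$. Where you genuinely differ is in the proof of the Han inequality itself. The paper first establishes the leave-one-out case $(k-1)S(A_1\cdots A_k)\le\sum_{i=1}^k S(A_{\{1,\dots,k\}\setminus\{i\}})$ by introducing a \emph{second} purification $W$ of $\rho_N$ and applying Theorem \ref{SSI} iteratively $k-1$ times, and then reaches general $s$ by induction on $k$. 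You instead transplant the classical chain-rule proof and run it directly on the mixed state: telescope $S(\rho_N)$ along an ordering, apply SSA in the form ``conditioning on more systems can only decrease conditional entropy,'' and average over orderings/subsets; the standard counting step then gives monotonicity of $\bar H_m/m$. This buys a proof with no extra purification and no induction on $k$, and it isolates the one quantum subtlety cleanly: monotonicity $S(\rho_A)\le S(\rho_B)$ for $A\subseteq B$, which a naive import of the classical argument might invoke, \emph{fails} for von Neumann entropy, but the chain-rule argument never uses it --- only SSA (equivalently, submodularity of $X\mapsto S(\rho_X)$), exactly as you claim. The one thing to make airtight in a final write-up is that classical Han's inequality cannot be cited as a black box; its von Neumann version must be stated and proved, which your sketch (telescoping plus SSA term by term, then the averaging step from size $m+1$ down to size $m$) does correctly.
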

Before proving this lemma, we will show how to deduce the main result of this section from Lemma \ref{lemma:Upper bound C^d_{prod} erasure} and Lemma \ref{lemma add erasure}.
\begin{proof}[Proof of Equation (\ref{Eq multiplicativity erasure}) in Theorem \ref{Main Theorem Erasure}]

The inequality $C^{d^k}_{prod}(\mathcal E_\lambda^{\otimes_k})\geq  kC^d_{prod}(\mathcal E_\lambda)$ holds for every channel (since one could use each copy of the channel independently). According to Theorem \ref{Main theorem Depolarizing} this implies that $C^{d^k}_{prod}(\mathcal E_\lambda^{\otimes_k})\geq k\lambda \ln  (nd)$. On the other hand, according to Lemmas \ref{lemma:Upper bound C^d_{prod} erasure} and  Lemma \ref{lemma add erasure} we have
\begin{align*}
C^{d^k}_{prod}(\mathcal E_\lambda^{\otimes_k})&\leq \sum_{s=1}^k\binom{k}{s}\lambda^s(1-\lambda)^{k-s}\ln  n^s+\sum_{s=0}^k\binom{k}{s}\lambda^s(1-\lambda)^{k-s}V_{d^k}(\Ne_s)\\&\leq
\sum_{s=1}^k\binom{k}{s}\lambda^s(1-\lambda)^{k-s}\ln  n^s+\sum_{s=0}^k\binom{k}{s}\lambda^s(1-\lambda)^{k-s}\frac{s}{k}\ln  d^k\\&=\sum_{s=1}^k\binom{k}{s}\lambda^s(1-\lambda)^{k-s}s(\log  (nd))\\&=k\lambda\ln  (nd).
\end{align*}Here, we have used that
\begin{align*}
\sum_{s=1}^k\binom{k}{s}\lambda^s(1-\lambda)^{k-s}s=k\lambda.
\end{align*}In order to see this, let us proceed by induction.

For $k=2$ we have $\sum_{s=1}^2\binom{k}{s}\lambda^s(1-\lambda)^{k-s}s=\binom{2}{1}\lambda(1-\lambda)+\binom{2}{2}\lambda^22=2\lambda$. Let us now assume the result for $k$. Then,
\begin{align*}
\sum_{s=1}^{k+1}\binom{k+1}{s}\lambda^s(1-\lambda)^{k+1-s}s&=\lambda(k+1)\sum_{s=1}^{k+1}\binom{k}{s-1}\lambda^{s-1}(1-\lambda)^{k-(s-1)}\\&=
\lambda(k+1)\sum_{s=0}^{k}\binom{k}{s}\lambda^{s}(1-\lambda)^{k-s}=\lambda(k+1),
\end{align*}where in the last equality we have used again the identity (\ref{identity}).

This finishes the proof.
\end{proof}
Lemma \ref{lemma add erasure} can be obtained as a simple consequence of the following deep and extremely useful result in information theory.
\begin{theorem}[Strong subadditivity inequality, \cite{LiRu}]\label{SSI}
For every tripartite state $\rho\in S_1\otimes S_1^n\otimes S_1^n$ the following inequality holds.
\begin{align*}
S(\rho)+S\Big(\big(tr_n\otimes id_n \otimes tr_n\big)(\rho)\Big)\leq S\Big(\big(id_n\otimes id_n \otimes tr_n\big)(\rho)\Big)+S\Big(\big(tr_n\otimes id_n\otimes id_n\big)(\rho)\Big).
\end{align*}
\end{theorem}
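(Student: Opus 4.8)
The plan is to derive strong subadditivity from the \emph{monotonicity of the relative entropy under partial traces}, to obtain that monotonicity from the \emph{joint convexity} of the relative entropy, and to reduce that convexity to Lieb's concavity theorem, which is the only genuinely hard ingredient. Throughout write $\rho=\rho_{123}$ for a state on $\mathcal H_1\otimes\ell_2^n\otimes\ell_2^n$ and denote by $\rho_{12},\rho_{23},\rho_2,\rho_1,\dots$ its reductions under the indicated partial traces; recall that the relative entropy $S(\rho\,\|\,\sigma)=\tr(\rho\log\rho)-\tr(\rho\log\sigma)$ is nonnegative, unitarily invariant, additive on tensor products, and equal to $+\infty$ when $\supp\rho\not\subseteq\supp\sigma$ (one reduces to the full-rank case by replacing $\sigma$ with $(1-\varepsilon)\sigma+\varepsilon\uno/d$ and letting $\varepsilon\to 0$). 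Since $(tr_n\otimes id_n\otimes tr_n)(\rho)=\rho_2$, $(id_n\otimes id_n\otimes tr_n)(\rho)=\rho_{12}$ and $(tr_n\otimes id_n\otimes id_n)(\rho)=\rho_{23}$, the claim reads $S(\rho_{123})+S(\rho_2)\le S(\rho_{12})+S(\rho_{23})$.

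First I would reduce to monotonicity. Applying $tr_3$ to the two states $\rho=\rho_{123}$ and $\sigma=\rho_1\otimes\rho_{23}$ gives $tr_3\rho=\rho_{12}$ and $tr_3\sigma=\rho_1\otimes\rho_2$. Expanding $\log(\rho_1\otimes\rho_{23})=\log\rho_1\otimes\uno+\uno\otimes\log\rho_{23}$ (and similarly for $\rho_1\otimes\rho_2$) yields
\begin{align*}
S(\rho_{123}\,\|\,\rho_1\otimes\rho_{23})&=S(\rho_1)+S(\rho_{23})-S(\rho_{123}),\\
S(\rho_{12}\,\|\,\rho_1\otimes\rho_2)&=S(\rho_1)+S(\rho_2)-S(\rho_{12}).
\end{align*}
Hence the monotonicity inequality $S(tr_3\rho\,\|\,tr_3\sigma)\le S(\rho\,\|\,\sigma)$ becomes, after cancelling $S(\rho_1)$, exactly $S(\rho_{123})+S(\rho_2)\le S(\rho_{12})+S(\rho_{23})$. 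Thus it suffices to prove monotonicity of the relative entropy under $tr_3$.

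For that step I would use the Weyl system $T_{k,l}$ of Proposition \ref{basic properties} acting on the third factor $\ell_2^n$. By the identity $\sum_{k,l=1}^n T_{k,l}\,\rho\,T_{k,l}^*=n\,\tr(\rho)\,\uno_n$ established in the proof of Proposition \ref{prop factorization}, the twirl $\mathcal T(x)=\frac1{n^2}\sum_{k,l}(\uno\otimes T_{k,l})\,x\,(\uno\otimes T_{k,l})^*$ satisfies $\mathcal T(\rho_{123})=\rho_{12}\otimes\frac{\uno_n}{n}$ for every state. Granting joint convexity of $(\rho,\sigma)\mapsto S(\rho\,\|\,\sigma)$ together with unitary invariance, one obtains
\begin{align*}
S\Big(\rho_{12}\otimes\tfrac{\uno_n}{n}\,\Big\|\,\sigma_{12}\otimes\tfrac{\uno_n}{n}\Big)=S\big(\mathcal T(\rho)\,\|\,\mathcal T(\sigma)\big)\le\frac1{n^2}\sum_{k,l}S(\rho\,\|\,\sigma)=S(\rho\,\|\,\sigma),
\end{align*}
and additivity of the relative entropy collapses the left-hand side to $S(\rho_{12}\,\|\,\sigma_{12})$, which is precisely the required monotonicity under $tr_3$.

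It remains to prove joint convexity, and here lies the real difficulty. Following Lindblad, for $t\in(0,1]$ set $g_t(\rho,\sigma)=\tr(\sigma^{\,t}\rho^{\,1-t})$; on full-rank states $g_0\equiv 1$, and differentiating at $t=0$ gives $\tr(\rho\log\sigma)-\tr(\rho\log\rho)=-S(\rho\,\|\,\sigma)$, so that $-S(\rho\,\|\,\sigma)=\lim_{t\to0^+}\frac{g_t(\rho,\sigma)-1}{t}$. \textbf{Lieb's concavity theorem} asserts that for each fixed $t\in[0,1]$ the map $(\rho,\sigma)\mapsto\tr(\sigma^{\,t}\rho^{\,1-t})$ is jointly concave on positive operators; granting this, each quotient $\frac{g_t-1}{t}$ is jointly concave, and a pointwise limit of concave functions is concave, so $-S(\rho\,\|\,\sigma)$ is jointly concave, i.e.\ $S(\rho\,\|\,\sigma)$ is jointly convex. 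The main obstacle is therefore Lieb's concavity theorem itself, the analytic heart of the whole argument; I would either invoke it as the cited input of \cite{LiRu}, or establish it separately by one of the standard routes — a complex-interpolation estimate on the strip controlling $z\mapsto\tr(\sigma^{\,z}\rho^{\,1-z})$ (in the spirit of the interpolation used throughout Section \ref{Sec: Quantum teleportation revised}), or the Pick-function and operator-convexity arguments of Epstein and Ando.
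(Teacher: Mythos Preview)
Your argument is correct and follows the classical Lindblad--Uhlmann route to strong subadditivity: reduce SSI to monotonicity of relative entropy under partial trace, obtain monotonicity from joint convexity via a unitary twirl, and derive joint convexity from Lieb's concavity theorem through the identity $-S(\rho\,\|\,\sigma)=\lim_{t\to 0^+}t^{-1}\big(\tr(\sigma^t\rho^{1-t})-1\big)$. The use of the Weyl system $(T_{k,l})$ from Proposition~\ref{basic properties} to implement the twirl is a nice touch that ties the argument to the paper's toolkit; the identity $\sum_{k,l}T_{k,l}\rho T_{k,l}^*=n\,\tr(\rho)\,\uno_n$ you invoke is indeed established in the proof of Proposition~\ref{prop factorization}. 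Each step is standard and correctly executed, with the honest acknowledgment that Lieb's concavity is the genuine analytic input.

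By contrast, the paper does \emph{not} prove Theorem~\ref{SSI}: it is quoted as a deep result of Lieb and Ruskai \cite{LiRu} and used as a black box in the proof of Lemma~\ref{lemma add erasure}. The paper does, however, remark in passing on a different route more in keeping with its own themes: SSI can be obtained by differentiating the norm $\|\rho\|_{S_1[S_p]}$ at $p=1$ and invoking the Minkowski-type inequalities of \cite{CL}, \cite{KiKo} (see \cite[Section~6]{DJKR}). That approach replaces Lieb's concavity by an operator Minkowski inequality for mixed $S_1[S_p]$-norms and fits naturally with the $p$-norm calculus developed in Sections~\ref{Sec: Quantum teleportation revised}--\ref{Sec: Some results about covariant channels}; your approach is the more classical information-theoretic one. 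Both ultimately rest on a nontrivial operator-concavity/convexity statement, so neither is ``elementary'', but yours is self-contained modulo Lieb's theorem, whereas the paper simply imports the finished inequality.
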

In general, if we call the respective systems $A$, $B$ and $C$, the strong subadditivity inequality can be written by $$S(ABC)+S(B)\leq S(AB)+S(BC).$$Of course, the system $B$ can be replaced by system $A$ and $C$ and the analogous inequality holds. It is also interesting to mention that the stong subadditivity inequality can be obtained by differentiating the norm $\big\|\rho \big\|_{S_1[S_p]}$ and using a Minkowski-type inequalities (see \cite[Section 6]{DJKR}).

We thank Andreas Winter for the explanation of the following proof which simplified very much a previous proof by the authors (not using the strong subadditivity inequality).
\begin{proof}[Proof of Lemma \ref{lemma add erasure}]
According to our definition (\ref{V_d}), $V_d(\Ne_s)$ can be trivially written as
\begin{align*}
\sup \Big\{\frac{s}{k}S\big((id_d\otimes tr_{n^k})(\rho)\big) +\Big(\frac{k-s}{k}\Big)S\big((id_d\otimes tr_{n^k})(\rho)\big)-  \frac{1}{\binom{k}{s}}\sum_{\substack{A\subseteq \{1,\cdots, k\}\\|A|=s}}S\big((id_d\otimes \Ne_A)(\rho)\big) \Big\},
\end{align*}where here the supremum is taken over all pure states $\rho\in S_1^d(S_1^{n^k})$. Since, we clearly have $S\big((id_d\otimes tr_{n^k})(\rho)\big) \leq \ln d$ for every state $\rho$, it suffices to show that for every pure state $\rho\in S_1^d(S_1^{n^k})$ we have
\begin{align*}
\Big(\frac{k-s}{k}\Big)S\big((id_d\otimes tr_{n^k})(\rho)\big)\leq  \frac{1}{\binom{k}{s}}\sum_{\substack{A\subseteq \{1,\cdots, k\}\\|A|=s}}S\big((id_d\otimes \Ne_A)(\rho)\big).
\end{align*}Now, since we are assuming that $\rho$ is pure, the previous inequality is the same as
\begin{align*}
\Big(\frac{k-s}{k}\Big)S\big((tr_d\otimes id_{n^k})(\rho)\big)\leq  \frac{1}{\binom{k}{s}}\sum_{\substack{A\subseteq \{1,\cdots, k\}\\|A|=s}}S\big((tr_d\otimes \Ne_{A^c})(\rho)\big).
\end{align*}Since we must prove the result for every  $0\leq s\leq k$, by replacing $s$ with $k-s$, we see that it suffices to show that for every not necessarily pure state $\rho\in S_1^{n^k}$ and for every $0\leq s\leq k$ one has
\begin{align*}
\frac{s}{k}S(\rho)\leq  \frac{1}{\binom{k}{s}}\sum_{\substack{A\subseteq \{1,\cdots, k\}\\|A|=s}}S\big(\Ne_{A}(\rho)\big).
\end{align*}Let us simplify the notation of the previous inequality by writing it as
\begin{align}\label{s-general erasure add}
\frac{s}{k}S(A_1\cdots A_k)\leq  \frac{1}{\binom{k}{s}}\sum_{|\delta|=s}S\big(A_\delta \big),
\end{align}with the obvious interpretation.
We will first prove this inequality for the particular case $s=k-1$ and we will obtain the general case by induction. In this case, we must show
\begin{align}\label{s=k-1erasure add}
(k-1)S(A_1\cdots A_k)\leq  \sum_{i=1}^kS\big(A_{[k]-\{i\}}\big),
\end{align}Let us consider a purification\footnote{Given any state $\rho\in S_1^N$, we can always find a unit vector $|\psi\rangle\in \C_M\otimes \C_N$ so that $(tr_M\otimes id_N)\big(|\psi\rangle\langle\psi|\big)=\rho$.} $WA_1\cdots A_k$ of the system $A_1\cdots A_k$ (that is, the state $\rho\in S_1^{n^k}$) so that we can write the previous expression as
\begin{align}\label{s=k-1erasure add alternative}
(k-1)S(W)\leq  \sum_{i=1}^kS\big(WA_i\big).
\end{align}Here, we are using that for every multipartite pure state the von Neumann entropy of any subsystem is the same as the von Neumann entropy of the complement subsystem, which is a direct consequence of the Hilbert Schmidt decomposition. Now, a direct application of Theorem \ref{SSI} implies that for every $0\leq s\leq k-1$, $$S(W)+S(WA_{[k]-\{0,\cdots ,s\}})\leq S(WA_{s+1})+S(WA_{[k]-\{0,\cdots ,s+1\}}).$$ Then, we can obtain Equation (\ref{s=k-1erasure add alternative}) by applying this inequality k-1 times iterately.
%
%
%
%
%
%
With Equation (\ref{s=k-1erasure add}) at hand, we can finish our proof by using induction. Checking that (\ref{s-general erasure add}) holds for $k=2$ ($s=0,1,2$) is very easy by just using the subadditivity of the von Neumann entropy\footnote{This result is a trivial consequence of Theorem \ref{SSI}.} $S(A_1A_2)\leq S(A_1)+S(A_2)$. On the other hand, let us assume that (\ref{s-general erasure add}) holds for every state $\rho\in S_1^{n^{k-1}}$ (so for every  systems $A_1,\cdots ,A_{k-1}$) and every $0\leq s\leq k-1$ and we will show that, then, it must also hold for $k$. First of all, note that the case $s=k$ is completely trivial, so it suffices to consider $0\leq s\leq k-1$. Then, we can write
\begin{align*}
\frac{s}{k}S(A_1\cdots A_k)&\leq  \frac{s}{k(k-1)} \sum_{i=1}^kS\big(A_{[k]-\{i\}}\big)\leq  \frac{s}{k(k-1)}\sum_{i=1}^k\frac{k-1}{s}\frac{1}{\binom{k-1}{s}}\sum_{|\delta|=s}S\big(A_{[k]-\{i\};\delta} \big)\\&=\frac{1}{k} \sum_{i=1}^k\frac{1}{\binom{k-1}{s}}\sum_{|\delta|=s}S\big(A_{[k]-\{i\};\delta}  \big)=\frac{1}{\binom{k}{s}}\sum_{|\delta|=s}S\big(A_\delta \big).
\end{align*}Here, the first inequality follows from Equation (\ref{s=k-1erasure add}) and the second inequality follows from the induction hypothesis. The last equality is straighforward.
\end{proof}
\section*{Acknowledgments}
We thank Andreas Winter and Toby S. Cubitt for helpful conversations. A part of this work was done at the Isaac Newton Institute (Cambridge, U.K.), during the programme on Mathematical Challenges in Quantum Information in Fall 2013.

\vskip 2cm

\hfill \noindent \textbf{Marius Junge} \\
\null \hfill Department of Mathematics \\ \null \hfill
University of Illinois at Urbana-Champaign\\ \null \hfill 1409 W. Green St. Urbana, IL 61891. USA
\\ \null \hfill\texttt{junge@math.uiuc.edu}

\vskip 1cm

\hfill \noindent \textbf{Carlos Palazuelos} \\
\null \hfill Instituto de Ciencias Matem\'aticas, ICMAT\\
\null \hfill Facultad de Ciencias Matem\'aticas\\ \null \hfill
Universidad Complutense de Madrid \\ \null \hfill Plaza de Ciencias s/n.
28040, Madrid. Spain
\\ \null \hfill\texttt{carlospalazuelos@ucm.es}



\end{document}